\newtheorem{proposition}{Proposition}[section]
\newtheorem{lemma}[proposition]{Lemma}
\newtheorem{corollary}[proposition]{Corollary}
\newtheorem{theorem}[proposition]{Theorem}
\theoremstyle{definition}
\newtheorem{definition}[proposition]{Definition}
\newtheorem{example}[proposition]{Example}
\newtheorem{examples}[proposition]{Examples}
\newcommand{\thlabel}[1]{\label{th:#1}}
\newcommand{\thref}[1]{Theorem~\ref{th:#1}}
\newcommand{\selabel}[1]{\label{se:#1}}
\newcommand{\seref}[1]{Section~\ref{se:#1}}
\newcommand{\lelabel}[1]{\label{le:#1}}
\newcommand{\leref}[1]{Lemma~\ref{le:#1}}
\newcommand{\prlabel}[1]{\label{pr:#1}}
\newcommand{\prref}[1]{Proposition~\ref{pr:#1}}
\newcommand{\colabel}[1]{\label{co:#1}}
\newcommand{\coref}[1]{Corollary~\ref{co:#1}}
\newcommand{\exlabel}[1]{\label{ex:#1}}
\newcommand{\exref}[1]{Example~\ref{ex:#1}}
\newcommand{\delabel}[1]{\label{de:#1}}
\newcommand{\deref}[1]{Definition~\ref{de:#1}}
\newcommand{\eqlabel}[1]{\label{eq:#1}}
\newcommand{\equref}[1]{(\ref{eq:#1})}
\def\ot{\otimes}
\def\CC{{\mathbb C}}
\newcommand{\Cc}{\mathcal{C}}
\newcommand{\Mm}{\mathcal{M}}
\def\*C{{}^*\hspace*{-1pt}{\Cc}}
\def\text#1{{\rm {\rm #1}}}
\begin{document}
\title[Extending structures for algebras]
{Extending structures, Galois groups and supersolvable associative
algebras}

\author{A. L. Agore}
\address{Faculty of Engineering, Vrije Universiteit Brussel, Pleinlaan 2,
B-1050 Brussels, Belgium \textbf{and} Department of Applied Mathematics,
Bucharest University of Economic Studies, Piata Romana 6,
RO-010374 Bucharest 1, Romania} \email{ana.agore@vub.ac.be and
ana.agore@gmail.com}

\author{G. Militaru}
\address{Faculty of Mathematics and Computer Science, University of Bucharest, Str.
Academiei 14, RO-010014 Bucharest 1, Romania}
\email{gigel.militaru@fmi.unibuc.ro and gigel.militaru@gmail.com}

\thanks{A.L. Agore is Postoctoral Fellow of the Fund for Scientific
Research-Flanders (Belgium) (F.W.O. Vlaanderen). This work was
supported by a grant of the Romanian National Authority for
Scientific Research, CNCS-UEFISCDI, grant no. 88/05.10.2011.}

\subjclass[2010]{16D70, 16Z05, 16E40} \keywords{unified products,
Galois groups, classification of algebras}


\maketitle

\begin{abstract}
Let $A$ be a unital associative algebra over a field $k$. All
unital associative algebras containing $A$ as a subalgebra of a
given codimension $\mathfrak{c}$ are described and classified. For
a fixed vector space $V$ of dimension $\mathfrak{c}$, two
non-abelian cohomological type objects are explicitly constructed:
${\mathcal A}{\mathcal H}^{2}_{A} \, (V, \, A)$ will classify all
such algebras up to an isomorphism that stabilizes $A$ while
${\mathcal A}{\mathcal H}^{2} \, (V, \, A)$ provides the
classification from H\"{o}lder's extension problem viewpoint. A
new product, called the unified product, is introduced as a tool
of our approach. The classical crossed product or the twisted
tensor product of algebras are special cases of the unified
product. Two main applications are given: the Galois group ${\rm
Gal} \, (B/A)$ of an extension $A \subseteq B$ of associative
algebras is explicitly described as a subgroup of a semidirect
product of groups ${\rm GL}_k (V) \rtimes {\rm Hom}_k (V, \, A)$,
where the vector space $V$ is a complement of $A$ in $B$. The
second application refers to supersolvable algebras introduced as
the associative algebra counterpart of supersolvable Lie algebras.
Several explicit examples are given for supersolvable algebras
over an arbitrary base field, including those of characteristic
two whose difficulty is illustrated.
\end{abstract}

\section*{Introduction}
The H\"{o}lder's \emph{extension problem} at the level of unital
associative algebras was initiated by Everett \cite{Ev} and
Hochschild \cite{Hoch2} and is still an open and notoriously
difficult problem. Rephrased in its full generality, it consist of
the following question: \emph{let $A$ be an algebra over a field
$k$, $E$ a vector space and $\pi : E \to A$ a $k$-linear
epimorphism of vector spaces. Describe and classify all algebra
structures that can be defined on $E$ such that $\pi : E \to A$ is
a morphism of algebras.} The partial answer to the extension
problem was given in \cite[Theorem 6.2]{Hoch2}: all algebra
structures $\cdot$ on $E$ such that $W := {\rm Ker} (\pi)$ is a
two-sided ideal of null square are classified by the second
Hochschild cohomological group ${\rm H}^2 (A, W)$. The elementary
way in which we reformulated the extension problem allows us to
consider the categorical dual of it. Thus, we arrive at the
following question which is the subject of the present paper:

\textbf{Extending structures problem.} \textit{Let $A$ be a unital
associative algebra and $E$ a vector space containing $A$ as a
subspace. Describe and classify the set of all unital associative
algebra structures $\cdot$ that can be defined on $E$ such that
$A$ becomes a subalgebra of $(E, \cdot)$.}

The extending structures (ES) problem is the associative algebra
counterpart of the similar problem studied for Lie algebras in
\cite{am-2013} but, as we will see later on, the associative
algebra version of it is far from being trivial since the
anti-symmetry on the bracket simplifies the construction performed
for Lie algebras. If we fix $V$ a complement of $A$ in $E$ then
the ES-problem can be rephrased equivalently as follows: describe
and classify all unital associative algebras containing $A$ as a
subalgebra of codimension equal to ${\rm dim} (V)$. The ES-problem
is the dual of the extension problem and it also generalizes the
so-called \emph{radical embedding problem} introduced by Hall
\cite{hall} which consists of the following: given a finite
dimensional nilpotent algebra $A$, describe the family of all
unital algebras with radical isomorphic to $A$ - for more details
see \cite{fl}. The ES-problem is a very difficult question: if $A
= k$ then the ES-problem asks in fact for the classification of
all algebra structures on a given vector space $E$. For this
reason, from now on we will assume that $A \neq k$. Regardless of
the difficulty of the ES-problem, we can still provide detailed
answers to it in certain special cases which depend on the choice
of the algebra $A$ and mainly on the codimension of $A$ in $E$.
Moreover, a new class of algebras, which we call
\emph{supersolvable} algebras, appear on the route as the
associative algebra counterpart of supersolvable Lie algebras
\cite{barnes}: these are finite dimensional algebras $E$ for which
there exists a finite chain of subalgebras $ E_0 :=k \subset E_1
\subset \cdots \subset E_m := E$ such that $E_i$ has codimension
$1$ in $E_{i+1}$, for all $i = 0, \cdots, m-1$. All supersolvable
algebras of a given dimension can be classified using the method
introduced in this paper by a recursive type algorithm in which
the key step is the one for codimension $1$: the crucial role will
be played by the characters of the algebra $A$ and by some twisted
derivations of $A$ satisfying certain compatibility conditions.

The paper is organized as follows. In \seref{unifiedprod} we will
perform the abstract construction of the unified product $A
\ltimes V$: it is associated to an algebra $A$, a vector space $V$
and a system of data $\Omega(A, V) = \bigl(\triangleleft, \,
\triangleright, \, \leftharpoonup, \, \rightharpoonup, \, f, \,
\cdot \bigl)$ called an extending datum of $A$ through $V$.
\thref{1} establishes the set of axioms that need to be satisfied
by $\Omega(A, V)$ such that $A\ltimes V$ with a certain
multiplication becomes an associative unitary algebra, i.e. a
unified product. In this case, $\Omega(A, V) =
\bigl(\triangleleft, \, \triangleright, \, \leftharpoonup, \,
\rightharpoonup, \, f, \, \cdot \bigl)$ will be called an
\emph{algebra extending structure} of $A$ through $V$. We
highlight here an important difference between the unified product
for algebras and the corresponding unified products for Lie
algebras \cite{am-2013}. The construction of the unified product
$A\ltimes V$ for algebras requires two more actions
$\leftharpoonup \, : A \times V \to A$ and $\rightharpoonup \, : A
\times V \to V$ which connects $A$ and $V$. The two actions are
missing in the case of Lie algebras: the absence of the two maps
is a consequence of the anti-symmetry on the bracket. Now let $A$
be an algebra, $E$ a vector space containing $A$ as a subspace and
$V$ a given complement of $A$ in $E$. \thref{classif} provides the
answer to the description part of the ES-problem: there exists an
algebra structure $\cdot$ on $E$ such that $A$ is a subalgebra of
$(E, \cdot)$ if and only if there exists an isomorphism of
algebras $(E, \cdot) \cong A \ltimes V$, for some algebra
extending structure $\Omega(\mathfrak{g}, V) =
\bigl(\triangleleft, \, \triangleright, \, \leftharpoonup, \,
\rightharpoonup, \, f, \, \cdot \bigl)$ of $A$ through $V$.
Moreover, the algebra isomorphism $(E, \cdot) \cong A \ltimes V$
can be chosen in such a way that it stabilizes $A$ and
co-stabilizes $V$. Based on this result we are able to give the
theoretical answer to the ES-problem in \thref{main1}: all algebra
structures on $E$ containing $A$ as a subalgebra are classified by
two non-abelian cohomological type objects which are explicitly
constructed. The first one is denoted by ${\mathcal A}{\mathcal
H}^{2}_{A} \, (V, \, A)$ and will classify all such structures up
to an isomorphism that stabilizes $A$. We also indicate the
bijection between the elements of ${\mathcal A}{\mathcal
H}^{2}_{A} \, (V, \, A)$ and the isomorphism classes of all
extending structures of $A$ to $E$. Having in mind that we want to
\emph{extend} the algebra structure on $A$ to a bigger vector
space $E$ this is in fact the object responsible for the
classification of the ES-problem. If $V = k^n$, the object
${\mathcal A}{\mathcal H}^{2}_{A} \, (k^n, \, A)$ classifies up to
an isomorphism all algebras which contain and stabilize $A$ as a
subalgebra of codimension $n$. Hence, by computing ${\mathcal
A}{\mathcal H}^{2}_{A} \, (k^n, \, A)$, for a given algebra $A$,
we obtain important information regarding the classification of
finite dimensional algebras. On the other hand, in order to comply
with the traditional way of approaching the extension problem
\cite{Hoch2}, we also construct a second classifying object,
denoted by ${\mathcal A}{\mathcal H}^{2} \, (V, \, A)$, which
provides the classification of the ES-problem up to an isomorphism
of algebras that stabilizes $A$ and co-stabilize $V$. Thus, the
object ${\mathcal A}{\mathcal H}^{2} \, (V, \, A)$, whose
construction is simpler than the one of ${\mathcal A}{\mathcal
H}^{2}_{A} \, (V, \, A)$, appears as a sort of dual of the
classical Hochschild cohomological group. There exists a canonical
projection $ {\mathcal A}{\mathcal H}^{2} \, (V, \, A)
\twoheadrightarrow {\mathcal A}{\mathcal H}^{2}_A \, (V, \, A)$
between these two classifying objects. Although the results
presented in \seref{unifiedprod} are of a rather technical nature,
their efficiency and applicability will be shown in
\seref{cazurispeciale} and \seref{exemple} where several examples
are provided. As a first surprising application of the classifying
object ${\mathcal A}{\mathcal H}^{2}_{A} \, (V, \, A)$, we are
able to describe the Galois group ${\rm Gal} \, (B/A)$ of an
arbitrary extension $A \subseteq B$ of associative algebras as a
subgroup of a semidirect product of groups ${\rm GL}_k (V) \rtimes
{\rm Hom}_k (V, \, A)$, where the vector space $V$ is isomorphic
to the quotient vector space $B/A$ (\coref{galgrup}). In
particular, for any extension of finite dimensional algebras
$A\subseteq B$, the Galois group ${\rm Gal} \, (B/A)$ embeds in
${\rm GL} (m, \, k) \rtimes {\rm M}_{n\times m} (k)$, for some
positive integers $m$ and $n$, where ${\rm M}_{n\times m} (k)$ is
the additive group of $(n\times m)$-matrices over $k$. This shows
that, for a potential generalization of the classical Galois
theory at the level of associative algebras the role of the
symmetric groups $S_n$ has to be replaced by these canonical
semidirect products of groups ${\rm GL} (m, \, k) \rtimes {\rm
M}_{n\times m} (k)$. The next part of the section deals with
several special cases of the unified product and we emphasize the
problem for which each of these products is responsible. Let $i: A
\hookrightarrow E$ be an inclusion of algebras. \coref{splitleft},
\coref{splitbimo} and \coref{splialg} give necessary and
sufficient conditions for $i$ to have a retraction which is a
left/right $A$-linear map, an $A$-bimodule map and respectively an
algebra map. In the latter case, the associated unified product
has a simple form which we call the \emph{semidirect product} by
analogy with the group and Lie algebra case since it describes the
split monomorphisms in the category of algebras. We also show that
the classical crossed products \cite{passman} and their
generalizations \cite{TB, cap} as well as the Ore extensions are
all special cases of the unified product. \deref{mpalgebras}
introduces a new concept, namely the \emph{matched pair} of two
algebras $A$ and $V$. As a special case of the unified product, we
define the \emph{bicrossed product} $A \bowtie V$ associated to a
matched pair of algebras. Even if the definition differs from the
one given in the case of groups \cite{Takeuchi} or Lie algebras
\cite{majid}, \coref{bicrfactor} shows that it plays the same
role, namely it provides the tool to answer the factorization
problem for algebras. The end of the section deals with the
commutative case: in this case the unified product and its axioms
simplify considerably.

Computing the classifying objects ${\mathcal A} {\mathcal
H}^{2}_{A} \, (V, \, A)$ and ${\mathcal A} {\mathcal H}^{2} \, (V,
\, A)$ is a highly nontrivial problem. In \seref{exemple} we shall
identify a way of computing both objects for what we have called
\emph{flag extending structures} of $A$ to $E$ as defined in
\deref{flagex}. All flag extending structures of $A$ to $E$ can be
completely described by a recursive reasoning where the key step
is the case when $A$ has codimension $1$ as a subspace of $E$.
This case is solved in \thref{clasdim1}, where ${\mathcal A}
{\mathcal H}^{2}_{A} \, (k, \, A)$ and ${\mathcal A} {\mathcal
H}^{2} \, (k, \, A)$ are described. The key players in this
context are the so-called flag datums of an algebra $A$ introduced
in \deref{tehnica}: in the definition of a flag datum two
characters of the algebra $A$ are involved as well as two twisted
derivations satisfying certain compatibility conditions. As an
application, in \coref{galcodim1} we shall compute the Galois
group for any algebra extension $A \subseteq B$ for which $A$ has
codimension $1$ in $B$. \thref{clasdim1} proves itself to be
efficient for the classification of all supersolvable algebras.
\coref{dim2alg} classifies and counts the number of types of
isomorphisms of algebras of dimension $2$ over an arbitrary field
$k$. By iterating the algorithm, we can increase the dimension by
$1$ at each step, obtaining in this way the classification of all
supersolvable algebras in dimension $3$, $4$, etc. (see
\thref{flag3dimsin} for dimension $3$). We mention that all
classification results presented in this paper are over an
arbitrary field $k$, including the case of characteristic $2$
whose difficulty is illustrated.


\section{Unified products for algebras}\selabel{unifiedprod}
\subsection*{Notations and terminology}
For a family of sets $(X_i)_{i\in I}$ we shall denote by
$\sqcup_{i\in I} \, X_i$ their coproduct in the category of sets,
i.e. $\sqcup_{i\in I} \, X_i$ is the disjoint union of the
$X_i$'s. All vector spaces, linear or bilinear maps are over an
arbitrary field $k$. A map $f: V \to W$ between two vector spaces
is called the trivial map if $f (v) = 0$, for all $v\in V$. By an
\emph{algebra} $A$ we mean an unital associative algebra over $k$.
However, when the algebras are not unital it will be explicitly
mentioned. All algebra maps preserve units and any left/right
$A$-module is unital. ${\rm Alg} \,(A, k)$ denotes the space of
all algebra maps $A \to k$ and ${\rm Aut}_{\rm Alg} (A)$ the group
of algebra automorphisms of $A$. For an algebra $A$ we shall
denote by ${}_A\Mm_A$ the category of all $A$-bimodules, i.e.
triples $(V, \, \rightharpoonup, \, \triangleleft)$ consisting of
a vector space $V$ and two bilinear maps $\rightharpoonup \, : A
\times V \to V$, $\triangleleft : V \times A \to V$ such that $(V,
\rightharpoonup)$ is a left $A$-module, $(V, \triangleleft)$ is a
right $A$-module and $a \rightharpoonup (x \triangleleft b) = (a
\rightharpoonup x) \triangleleft b$, for all $a$, $b\in A$ and
$x\in V$. If $(V, \, \rightharpoonup, \, \triangleleft) \in
{}_A\Mm_A$, then the \emph{trivial extension} of $A$ by $V$ is the
algebra $A \times V$, with the multiplication given for any $a$,
$b\in A$, $x$, $y\in V$ by:
$$
(a, \, x) \cdot (b, \, y) := \bigl(ab, \,\, a \rightharpoonup y  +
x\triangleleft b \bigl)
$$
Let $A$ be an algebra, $E$ a vector space such that $A$ is a
subspace of $E$ and $V$ a complement of $A$ in $E$, i.e. $V$ is a
subspace of $E$ such that $E = A + V$ and $A \cap V = \{0\}$. For
a linear map $\varphi: E \to E$ we consider the diagram:
\begin{eqnarray} \eqlabel{diagrama1}
\xymatrix {& A \ar[r]^{i} \ar[d]_{Id} & {E}
\ar[r]^{\pi} \ar[d]^{\varphi} & V \ar[d]^{Id}\\
& A \ar[r]^{i} & {E}\ar[r]^{\pi } & V}
\end{eqnarray}
where $\pi : E \to V$ is the canonical projection of $E = A + V$
on $V$ and $i: A \to E$ is the inclusion map. We say that
$\varphi: E \to E$ \emph{stabilizes} $A$ (resp.
\emph{co-stabilizes} $V$) if the left square (resp. the right
square) of diagram \equref{diagrama1} is commutative.

Two algebra structures $\cdot $ and $\cdot'$ on $E$ containing $A$
as a subalgebra are called \emph{equivalent} and we denote this by
$(E, \cdot ) \equiv (E, \cdot')$, if there exists an algebra
isomorphism $\varphi: (E, \cdot) \to (E, \cdot')$ which stabilizes
$A$. The algebra structures $\cdot$ and $\cdot'$ on $E$ are called
\emph{cohomologous} and we denote this by $(E, \cdot) \approx (E,
\cdot')$, if there exists an algebra isomorphism $\varphi: (E,
\cdot) \to (E, \cdot')$ which stabilizes $A$ and co-stabilizes
$V$, i.e. diagram \equref{diagrama1} is commutative. Then $\equiv$
and $\approx$ are both equivalence relations on the set of all
algebra structures on $E$ containing $A$ as a subalgebra and we
denote by ${\rm Extd} \, (E, \, A)$ (resp. ${\rm Extd}' \, (E, \,
A)$) the set of all equivalence classes via $\equiv$ (resp.
$\approx$). ${\rm Extd} \, (E, \, A)$ is the classifying object
for the ES problem: by explicitly computing ${\rm Extd} \, (E, \,
A )$ we obtain a parametrization of the set of all isomorphism
classes of algebra structures on $E$ which contain and stabilize
$A$ as a subalgebra. ${\rm Extd}' \, (E, \, A)$ gives a more
restrictive classification of the ES problem, similar to the
approach used in the case of the extension problem. Any two
cohomologous algebra structures on $E$ are of course equivalent,
hence there exists a canonical projection ${\rm Extd}' \,  (E, \,
A ) \twoheadrightarrow {\rm Extd} \, (E, \, A )$. In order to give
the answer to the ES-problem we introduce the following:

\begin{definition}\delabel{exdatum}
Let $A$ be an algebra and $V$ a vector space. An \textit{extending
datum of $A$ through $V$} is a system $\Omega(A, V) =
\bigl(\triangleleft, \, \triangleright, \, \leftharpoonup, \,
\rightharpoonup, \, f, \, \cdot \bigl)$ consisting of six bilinear
maps
\begin{eqnarray*}
\triangleleft : V \times A \to V, \quad &&\triangleright : V \times
A \to A, \quad \leftharpoonup \, : A \times V \to A, \quad
\rightharpoonup \, : A \times V \to V \\
&& f: V\times V \to A, \quad \cdot \, : V\times V \to V
\end{eqnarray*}
The extension datum $\Omega(A, V)$ is called \emph{normalized} if
for any $x \in V$ we have:
\begin{equation}\eqlabel{normed}
x \triangleright 1_A = 0, \,\,\,\, x \triangleleft 1_A = x,
\,\,\,\, 1_A \leftharpoonup x = 0, \,\,\,\, 1_A \rightharpoonup x
= x
\end{equation}

Let $\Omega(A, V) = \bigl(\triangleleft, \, \triangleright, \,
\leftharpoonup, \, \rightharpoonup, \, f, \, \cdot \bigl)$ be an
extending datum.  We denote by $ A \, \ltimes_{\Omega(A, V)} V = A
\, \ltimes V$ the vector space $A \, \times V$ together with the
bilinear map $\bullet$ defined by:
\begin{equation}\eqlabel{produnif}
(a, \, x) \bullet (b, \, y) := \bigl( ab + a \leftharpoonup y + x
\triangleright b + f(x, y), \,\, a \rightharpoonup y  +
x\triangleleft b + x \cdot y \bigl)
\end{equation}
for all $a$, $b \in A$ and $x$, $y \in V$. The object $A\ltimes V$
is called the \textit{unified product} or the \emph{dual
Hochschild product} of $A$ and $\Omega(A, V)$ if it is an
associative algebra with the multiplication given by
\equref{produnif} and the unit $(1_A, 0_V)$. In this case the
extending datum $\Omega(A, V) = \bigl(\triangleleft, \,
\triangleright, \, \leftharpoonup, \, \rightharpoonup, \, f, \,
\cdot \bigl)$ is called an \textit{algebra extending structure} of
$A$ through $V$. The maps $\triangleleft$, $\triangleright$,
$\leftharpoonup$ and $\rightharpoonup$ are called the
\textit{actions} of $\Omega(A, V)$ and $f$ is called the
\textit{cocycle} of $\Omega(A, V)$.
\end{definition}

The multiplication given by \equref{produnif} has a rather
complicated formula; however, for some specific elements we obtain
easier forms which will be useful for future computations:
\begin{eqnarray}
(a, 0) \bullet (b, y) &=& (ab + a \leftharpoonup y, \, a
\rightharpoonup y) \eqlabel{001}\\
(0, x) \bullet (b, y) &=& (x \triangleright b +
f(x, \, y), \, x \triangleleft b + x \cdot y) \eqlabel{002}\\
(a, x) \bullet (0, y) &=& (a \leftharpoonup y + f (x, \, y), \, a
\rightharpoonup y + x \cdot y) \eqlabel{003}\\
(a, x) \bullet (b, 0) &=& (ab + x \triangleright b, \, x
\triangleleft b)
\end{eqnarray}
for all $a$, $b \in A$ and $x$, $y \in V$. In particular, for any
$a$, $b \in A$ and $x$, $y \in V$ we have:
\begin{eqnarray}
(a, 0) \bullet (b, 0) &=& (ab , \, 0), \,\,\,\,\,\,\,\,\,\,\, (0,
x) \bullet (0,
y) = (f(x, \, y), \, x \cdot y) \eqlabel{001a}\\
(a, 0) \bullet (0, x) &=& (a \leftharpoonup x, \, a
\rightharpoonup x), \,\,\,\, (0, x) \bullet (b, 0) =
(x\triangleright b, \, x \triangleleft b) \eqlabel{002b}
\end{eqnarray}
The next theorem provides the necessary and sufficient conditions
that need to be fulfilled by an extending datum $\Omega(A, V)$
such that $A \ltimes V$ is a unified product.

\begin{theorem}\thlabel{1}
Let $A$ be an algebra, $V$ a vector space and $\Omega(A, V) =
\bigl(\triangleleft, \, \triangleright, \, \leftharpoonup, \,
\rightharpoonup, \, f, \, \cdot \bigl)$ an extending datum of $A$
by $V$. The following statements are equivalent:

$(1)$ $A \ltimes V$ is a unified product;

$(2)$ The following compatibilities hold for any $a$, $b \in A$,
$x$, $y$, $z \in V$:
\begin{enumerate}
\item[(A1)] $\Omega(A, V)$ is a normalized extending datum and
$(V, \rightharpoonup, \triangleleft) \in {}_A\Mm_A$ is an
$A$-bimodule;

\item[(A2)] $x \cdot ( y \cdot z) - (x \cdot y) \cdot z = f (x, \,
y) \rightharpoonup z \, - \,  x \triangleleft f (y, \, z)$;

\item[(A3)] $f(x, \, y\cdot z) - f(x\cdot y, \, z) = f (x, \, y)
\leftharpoonup z - x \triangleright f(y, \, z)$;

\item[(A4)] $a \rightharpoonup (x \cdot y) = (a \rightharpoonup x)
\cdot y + (a \leftharpoonup x) \rightharpoonup y$;

\item[(A5)] $(a \leftharpoonup x) \leftharpoonup y = a
\leftharpoonup (x \cdot y) + a f(x, \, y) - f (a \rightharpoonup
x, \,  y)$;

\item[(A6)] $ (ab) \leftharpoonup x = a (b \leftharpoonup x) + a
\leftharpoonup (b \rightharpoonup x)$;

\item[(A7)] $ x \triangleright (ab) = (x \triangleright a) b + (x
\triangleleft a) \triangleright b$;

\item[(A8)] $ x \triangleright (y \triangleright a) = (x \cdot y)
\triangleright a + f (x, \, y) a - f(x, \, y \triangleleft a) $;

\item[(A9)] $(x\cdot y) \triangleleft a = x \triangleleft (y
\triangleright  a) + x \cdot (y \triangleleft a)$;

\item[(A10)] $a (x \triangleright b) + a \leftharpoonup (x
\triangleleft b) = (a \leftharpoonup x) b + (a\rightharpoonup x)
\triangleright b$;

\item[(A11)] $x \triangleright (a \leftharpoonup y) + f (x, \, a
\rightharpoonup y) = (x \triangleright a) \leftharpoonup y + f (x
\triangleleft a, \, y)$;

\item[(A12)] $x \triangleleft (a \leftharpoonup y) + x \cdot (a
\rightharpoonup y) = (x \triangleright a) \rightharpoonup y + (x
\triangleleft a) \cdot y $;

\end{enumerate}
\end{theorem}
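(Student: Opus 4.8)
The plan is to prove the theorem by a direct verification that the multiplication \equref{produnif} is unital and associative exactly when (A1)--(A12) hold. I would begin with the unit. Since $\bullet$ and $f$ are bilinear, evaluating $(1_A, 0_V) \bullet (a, x)$ and $(a, x) \bullet (1_A, 0_V)$ and demanding that both equal $(a, x)$ for all $a \in A$ and $x \in V$ yields exactly the four normalization identities \equref{normed} (the terms $0 \triangleright a$, $a \leftharpoonup 0$, $f(0,x)$, etc. vanishing by bilinearity); conversely, these identities make $(1_A, 0_V)$ a two-sided unit. This isolates the normalization part of (A1) and lets me assume it for the remainder.

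Next comes the key simplification. Because $\bullet$ is bilinear in each slot and every element of $A \ltimes V = A \times V$ decomposes as $(a, x) = (a, 0) + (0, x)$, associativity of $\bullet$ on all of $A \ltimes V$ is equivalent to associativity on the generators, i.e. on triples $u, v, w$ each of pure type $(a,0)$ or $(0,x)$. This reduces the problem to the $2^3 = 8$ instances of $(u \bullet v) \bullet w = u \bullet (v \bullet w)$. Throughout I would compute via the reduced product formulas \equref{001}--\equref{002b}, which express $\bullet$ on such arguments with far fewer terms and keep each case tractable.

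I would then run the eight cases, in each one comparing the $A$-component and the $V$-component of the two sides separately. The all-$A$ triple reproduces associativity of $A$ and is automatic; the all-zero $V$-components vanish identically. Each of the remaining seven cases splits into one identity valued in $A$ and one valued in $V$, and these fourteen identities are precisely the outstanding content of the theorem. Concretely, I expect $(a,0),(b,0),(0,z)$ to give (A6) and the left-module axiom; $(a,0),(0,y),(b,0)$ to give (A10) and the bimodule compatibility $a \rightharpoonup (x \triangleleft b) = (a \rightharpoonup x) \triangleleft b$; $(a,0),(0,y),(0,z)$ to give (A5) and (A4); $(0,x),(b,0),(c,0)$ to give (A7) and the right-module axiom; $(0,x),(b,0),(0,z)$ to give (A11) and (A12); $(0,x),(0,y),(c,0)$ to give (A8) and (A9); and $(0,x),(0,y),(0,z)$ to give (A3) and (A2). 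Assembling the $A$- and $V$-components across all cases recovers (A1)--(A12), establishing both implications simultaneously.

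The argument poses no conceptual obstacle; the real difficulty is bookkeeping. The heaviest labour lies in the three cases carrying two $V$-arguments (which produce (A2)--(A5) and (A8)--(A9)), since there both the cocycle $f$ and the product $\cdot$ on $V$ enter and the expansions are longest; here I would track every occurrence of $f$ and of the four actions with care. A genuine, if minor, point is to confirm that no case imposes a condition outside the list, that is, that the fourteen component-identities are in exact bijection with the three module axioms of (A1) and with (A2)--(A12), with nothing redundant and nothing omitted; the explicit case-to-condition matching above is designed precisely to certify this.
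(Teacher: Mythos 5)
Your proposal is correct and follows essentially the same route as the paper's own proof: the unit condition is shown to be equivalent to the normalization \equref{normed}, and associativity is reduced by bilinearity to the eight triples of pure elements $(a,0)$, $(0,x)$, with exactly the same case-to-axiom matching (the all-$V$ triple giving (A2)--(A3), the triple $(a,0),(0,x),(0,y)$ giving (A4)--(A5), and so on, with the three $A$-bimodule axioms of (A1) absorbed into the mixed cases). Nothing further is needed.
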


Before going into the proof of the theorem, we have a few
observations on the compatibilities in \thref{1}. Although they
look rather complicated at first sight, they are in fact quite
natural and can be interpreted as follows: (A2) measures how far
$(V, \cdot)$ is from being an associative algebra and is called
the \emph{twisted associativity condition}. The compatibility
condition (A3) is a $2$-cocycle type condition. (A5) and (A8) are
deformations of the usual module conditions and they can be called
\emph{twisted module conditions} for the actions $\leftharpoonup$
and $\triangleright$. (A4), (A6), (A7), (A9), (A10) and (A12) are
compatibility conditions between the actions $(\triangleleft, \,
\triangleright, \, \leftharpoonup, \, \rightharpoonup)$ of
$\Omega(A, V)$. They will be used in the next section as a
definition for the notion of \emph{matched pair} of algebras.

\begin{proof}
We can easily check that $(1_A, 0_V)$ is a unit for the
multiplication given by \equref{produnif} if and only if the
extending datum $\Omega(A, V)$ is normalized. The rest of the
proof relies on a detailed analysis of the associativity condition
for the multiplication given by \equref{produnif}:
\begin{equation}\eqlabel{005}
\bigl[(a,\, x) \bullet (b,\, y)\bigl] \bullet (c,\, z) = (a,\, x)
\bullet \bigl[(b,\, y) \bullet (c,\, z)\bigl]
\end{equation}
where $a$, $b$, $c \in A$ and $x$, $y$, $z \in V$. Furthermore,
since in $A \ltimes V$ we have $(a, x) = (a, 0) + (0, x)$ it
follows that \equref{005} holds if and only if it holds for all
generators of $A \ltimes V$, i.e. for the set $\{(a, \, 0) ~|~ a
\in A\} \cup \{(0, \, x) ~|~ x \in V\}$. However, since the
computations are rather long but straightforward we will only
indicate the main steps of the proof. We will start by proving
that (A2) and (A3) hold if and only if \equref{005} holds for the
triple $(0, x)$, $(0, y)$, $(0, z)$ with $x$, $y$, $z \in V$.
Indeed, we have:
\begin{eqnarray*}
\bigl[(0,\, x) \bullet (0,\, y)\bigl] \bullet (0,\, z) &=&
\bigl(f(x,\,y) \leftharpoonup z + f(x \cdot y, \, z), \, f(x, \,y)
\rightharpoonup z + (x \cdot y) \cdot z \bigl)\\
(0,\, x) \bullet \bigl[(0,\, y) \bullet (0,\, z)\bigl] &=& \bigl(x
\rhd f(y, \, z) + f(x, \, y \cdot z), \, x \lhd f(y, \, z) + x
\cdot (y \cdot z)\bigl)
\end{eqnarray*}
Therefore, (A2) and (A3) hold if and only if \equref{005} holds
for the triple $(0, x)$, $(0, y)$, $(0, z)$ with $x$, $y$, $z \in
V$. In the same manner we can prove the following: (A4) and (A5)
hold if and only if \equref{005} holds for the triple $(a, 0)$,
$(0, x)$, $(0, y)$ with $a \in A$, $x$, $y \in V$. Furthermore,
\equref{005} holds for the triple $(a, 0)$, $(b, 0)$, $(0, x)$ if
and only if (A6) holds and $\rightharpoonup$ is a left $A$-module
structure on $V$. (A7) holds, together with the fact that $\lhd$
is a right $A$-module structure on $V$, if and only if
\equref{005} holds for the triple $(0, x)$, $(a, 0)$, $(b, 0)$.
(A8) and (A9) hold if and only if \equref{005} holds for the
triple $(0, x)$, $(0, y)$, $(a, 0)$. \equref{005} holds for the
triple $(a, 0)$, $(0, x)$, $(b, 0)$ if and only if (A10) holds as
well as the compatibility condition which makes $V$ an
$A$-bimodule with respect to the actions $\rightharpoonup$ and
$\lhd$. Finally, (A11) and (A12) hold if and only if \equref{005}
holds for the triple $(0, x)$, $(a, 0)$, $(0, y)$.
\end{proof}

From now on, an algebra extending structure of $A$
through a vector space $V$ will be viewed as a system
$\Omega(A, V) = \bigl(\triangleleft, \, \triangleright, \,
\leftharpoonup, \, \rightharpoonup, \, f, \, \cdot \bigl)$
satisfying the compatibility conditions (A1)-(A12). We denote by
${\mathcal A} {\mathcal E} (A, V)$ the set of all
algebra extending structures of $A$ through $V$.

\begin{example}\exlabel{twistedproduct}
We provide the first example of an algebra extending structure and
the corresponding unified product. More examples will be given in
\seref{cazurispeciale} and \seref{exemple}.

Let $\Omega(A, V) = \bigl(\triangleleft, \, \triangleright, \,
\leftharpoonup, \, \rightharpoonup, \, f, \, \cdot \bigl)$ be an
extending datum of $A$ through $V$ such that $\triangleright$,
$\leftharpoonup$ and $\cdot$ are the trivial maps. Then,
$\Omega(A, V)$ is an algebra extending structure of $A$ through
$V$ if and only if $(V, \, \rightharpoonup, \, \triangleleft) \in
{}_A\Mm_A$ and the following compatibilities are fulfilled:
\begin{eqnarray*}
f (x, \, y) \rightharpoonup z &=&  x \triangleleft f (y, \, z),  \quad \,\,\,
a f(x, \, y) =  f (a \rightharpoonup x, \,  y) \\
f(x, \, y \triangleleft a) &=& f (x, \, y) a, \quad
f (x, \, a \rightharpoonup y)= f (x \triangleleft a, \, y)
\end{eqnarray*}
for all $a \in A$, $x$, $y$ and $z \in V$. In this case, the associated
unified product $A \ltimes V$ has the multiplication defined for any $a$, $b \in A$ and
$x$, $y \in V$ by:
$$
(a, \, x) \bullet (b, \, y) := \bigl(ab + f(x, y), \,\, a \rightharpoonup y  +
x\triangleleft b \bigl)
$$
that is, $A \ltimes V$ is a cocycle deformation of the usual
trivial extension of $A$ by $V$, dual to the one considered in
\cite{Hoch2}.
\end{example}

Let $\Omega(A, V) = \bigl(\triangleleft, \, \triangleright, \,
\leftharpoonup, \, \rightharpoonup, \, f, \, \cdot \bigl) \in
{\mathcal A} {\mathcal E} (A, V)$ be an algebra extending structure
and $A \ltimes V$ the associated unified product. Then the
canonical inclusion
$$
i_A: A \to A \ltimes V, \qquad i_A (a) = (a, \, 0)
$$
is an injective algebra map. Therefore, we can see $A$ as a
subalgebra of $A \ltimes V$ through the identification $A \cong
i_A (A) = A \times \{0\}$. Conversely, we have the following
result which provides the answer to the description part of the
ES-problem:

\begin{theorem}\thlabel{classif}
Let $A$ be an algebra, $E$ a vector space containing $A$ as a subspace
and $\ast$ an algebra structure on $E$ such that $A$ is a
subalgebra in $(E, \ast)$. Then there exists an algebra
extending structure $\Omega(A, V) = \bigl(\triangleleft, \, \triangleright, \,
\leftharpoonup, \, \rightharpoonup, \, f, \, \cdot \bigl)$ of $A$ through a
subspace $V$ of $E$ and an isomorphism of algebras $(E, \ast ) \cong A \ltimes V$
that stabilizes $A$ and co-stabilizes $V$.
\end{theorem}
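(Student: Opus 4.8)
The plan is to reconstruct the extending datum directly from the given multiplication $\ast$ by fixing a linear complement and reading off the six structure maps as the components of products. First I would choose a subspace $V \subseteq E$ with $E = A \oplus V$ (such a complement exists since we work over a field) and write $p_A : E \to A$, $p_V : E \to V$ for the associated projections. Since $A$ is a subalgebra of the unital algebra $(E, \ast)$ it shares the ambient unit, so $1_A = 1_E$; this will be used to locate the unit of the reconstructed product.

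Next I would define the maps of $\Omega(A,V) = \bigl(\triangleleft, \, \triangleright, \, \leftharpoonup, \, \rightharpoonup, \, f, \, \cdot \bigl)$ by decomposing the relevant products along $E = A \oplus V$. For $a, b \in A$ and $x, y \in V$, set
\begin{align*}
a \leftharpoonup y &:= p_A(a \ast y), & a \rightharpoonup y &:= p_V(a \ast y),\\
x \triangleright b &:= p_A(x \ast b), & x \triangleleft b &:= p_V(x \ast b),\\
f(x,y) &:= p_A(x \ast y), & x \cdot y &:= p_V(x \ast y).
\end{align*}
These are bilinear because $\ast$ and the projections are, so $\Omega(A,V)$ is a genuine extending datum of $A$ through $V$.

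The heart of the argument is the transport identity. I would introduce the linear map $\varphi : A \ltimes V \to E$, $\varphi(a,x) := a + x$ (viewing $A, V \subseteq E$), which is bijective precisely because $E = A \oplus V$. The key computation is that expanding $(a+x) \ast (b+y)$ by bilinearity into the four products $a \ast b$ (which equals $ab$ as $A$ is a subalgebra), $a \ast y$, $x \ast b$, $x \ast y$, and regrouping their $A$- and $V$-components, reproduces exactly the two components of $(a,x) \bullet (b,y)$ in \equref{produnif}; hence $\varphi\bigl((a,x)\bullet(b,y)\bigr) = \varphi(a,x) \ast \varphi(b,y)$. From this everything follows formally: since $\bullet$ is the transport of the associative, unital multiplication $\ast$ along the bijection $\varphi$, it is itself associative with unit $\varphi^{-1}(1_E) = \varphi^{-1}(1_A) = (1_A, 0_V)$, so in particular $\Omega(A,V)$ is normalized (this is the content of $1_A \ast x = x = x \ast 1_A$). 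Thus $A \ltimes V$ is a unified product, which is by definition the statement that $\Omega(A,V)$ is an algebra extending structure; equivalently, by \thref{1}, the axioms (A1)--(A12) hold, and I do not need to verify any of them by hand. Finally, under the identification of underlying spaces $A \times V \cong E$ the map $\varphi$ is the canonical one, so $\varphi(a, 0_V) = a$ (it stabilizes $A$) and $p_V \circ \varphi^{-1}$ equals the projection $\pi$ onto $V$ (it co-stabilizes $V$), rendering diagram \equref{diagrama1} commutative.

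The proof has no serious obstacle; its only genuine content is the bookkeeping in the transport identity, and the pleasant point is that \emph{associativity}, the one truly onerous condition, is inherited for free from $(E,\ast)$ rather than being checked against the twelve compatibilities. The single place demanding care is the unit convention: one must invoke that a subalgebra shares the ambient unit, so that $1_A = 1_E$ and the reconstructed unit lands precisely at $(1_A, 0_V)$, yielding normalization and matching the definition of the unified product.
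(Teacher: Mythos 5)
Your proposal is correct and follows essentially the same route as the paper: fix a complement $V$ (equivalently, a linear retraction $p$ with $V = {\rm Ker}(p)$), read off the six structure maps as the $A$- and $V$-components of the relevant $\ast$-products, and transport the associative unital multiplication along the bijection $\varphi(a,x) = a + x$ so that the axioms (A1)--(A12) are inherited for free rather than verified by hand. The only cosmetic difference is that the paper parametrizes the construction by the retraction $p$ (writing $p_V(z) = z - p(z)$) instead of by the pair of projections, which is the same data.
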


\begin{proof}
Since $k$ is a field, there exists a linear map $p: E \to A$ such
that $p(a) = a$, for all $a \in A$. Then $V := \rm{Ker}(p)$ is a
complement of $A$ in $E$. We define the extending datum $\Omega(A,
V) = \bigl(\triangleleft = \triangleleft_p, \, \triangleright =
\triangleright_p, \, \leftharpoonup = \leftharpoonup_p, \,
\rightharpoonup = \rightharpoonup_p, \, f = f_p, \, \cdot =
\cdot_p \bigl)$ of $A$ through $V$ by the following formulas:
\begin{eqnarray*}
&& \triangleright : V \times A \to A, \quad \,\,\,\,\,\,\,
x \triangleright a := p (x \ast a), \quad \,\,\, \triangleleft : V \times A \to V,
\quad x \triangleleft a := x\ast a  - p (x\ast a) \\
&& \leftharpoonup  \, : A \times V \to A, \quad \,\,
a \leftharpoonup x := p(a\ast x), \quad \rightharpoonup \, : A \times V \to V,
\quad  a \rightharpoonup x := a \ast x - p(a\ast x)\\
&& f : V \times V \to A, \quad \,\, f(x, y) := p (x \ast y), \quad
\cdot : V \times V \to V, \qquad x \cdot y := x \ast y - p (x \ast y)
\end{eqnarray*}
for any $a\in A$ and $x$, $y\in V$. We shall prove that $\Omega(A,
V) = \bigl(\triangleleft_p, \, \triangleright_p, \,
\leftharpoonup_p, \, \rightharpoonup_p, \, f_p, \, \cdot_p \bigl)$
is an algebra extending structure of $A$ through $V$ and
\begin{eqnarray*}
\varphi: A \ltimes V \to E, \qquad \varphi(a, x) := a + x
\end{eqnarray*}
is an isomorphism of algebras that stabilizes $A$ and
co-stabilizes $V$. Instead of proving the compatibility conditions
(A1)-(A12), which require a very long and laborious computation,
we use the following trick combined with \thref{1}: $\varphi: A
\times V \to E$, $\varphi(a, \, x) = a + x$ is a linear
isomorphism between the algebra $(E, \ast)$ and the direct product
of vector spaces $A \times V$ with the inverse given by
$\varphi^{-1}(y) := \bigl(p(y), \, y - p(y)\bigl)$, for all $y \in
E$. Thus, there exists a unique algebra structure on $A \times V$
such that $\varphi$ is an isomorphism of algebras and this unique
multiplication $\circ$ on $A \times V$ is given by  $(a, x) \circ
(b, y) := \varphi^{-1} \bigl( \varphi(a, x) \ast \varphi(b,
y)\bigl)$, for all $a$, $b \in A$ and $x$, $y\in V$. The proof is
finished if we prove that this multiplication is the one defined
by \equref{produnif} associated to the system
$\bigl(\triangleleft_p, \, \triangleright_p, \, \leftharpoonup_p,
\, \rightharpoonup_p, \, f_p, \, \cdot_p \bigl)$. Indeed, for any
$a$, $b \in A$ and $x$, $y\in V$ we have:
\begin{eqnarray*}
(a, x) \circ (b, y) &=& \varphi^{-1} \bigl(\varphi(a, x) \ast
\varphi(b, y)\bigl) = \varphi^{-1} \bigl(  (a + x) \ast (b + y) \bigl) \\
&=& \varphi^{-1} ( a b + a \ast y + x \ast b + x \ast y ) \\
&=& \bigl ( ab + p(a \ast y) + p (x \ast b) + p (x \ast y), \\
&& a \ast y - p (a \ast y) + x \ast b - p(x \ast b) + x \ast y - p (x \ast y) \bigl) \\
&=& \bigl( ab + a \leftharpoonup y + x
\triangleright b + f(x, y), \,\, a \rightharpoonup y  +
x\triangleleft b + x \cdot y \bigl) \\
&=& (a, x) \bullet (b, y)
\end{eqnarray*}
as needed. Moreover, the following diagram is commutative
\begin{eqnarray*} \eqlabel{diagrama}
\xymatrix {& A \ar[r]^{i} \ar[d]_{Id} & {A \ltimes V}
\ar[r]^{q} \ar[d]^{\varphi} & V \ar[d]^{Id}\\
& A \ar[r]^{i} & {E}\ar[r]^{\pi } & V}
\end{eqnarray*}
where $\pi : E \to V$ is the projection of $E = A + V$
on the vector space $V$ and $q: A \ltimes V \to V$,
$q (a, x) := x$ is the canonical projection. The proof is now
finished.
\end{proof}

Based on \thref{classif}, the classification of all algebra
structures on $E$ that contain $A$ as a subalgebra reduces to the
classification of all unified products $A \ltimes V$, associated
to all algebra extending structures $\Omega(A, V) =
\bigl(\triangleleft, \, \triangleright, \, \leftharpoonup, \,
\rightharpoonup, \, f, \, \cdot \bigl)$, for a fixed complement
$V$ of $A$ in $E$. Next we will construct explicitly the
non-abelian cohomological type objects ${\mathcal A}{\mathcal
H}^{2}_{A} \, (V, \, A)$ and ${\mathcal A}{\mathcal H}^{2} \, (V,
\, A)$ which will parameterize the classifying sets ${\rm Extd} \,
(E, A)$ and respectively ${\rm Extd}' \, (E, A)$. First we need
the following:

\begin{lemma} \lelabel{morfismuni}
Let $\Omega(A, V) = \bigl(\triangleleft, \, \triangleright, \,
\leftharpoonup, \, \rightharpoonup, \, f, \, \cdot \bigl)$ and
$\Omega(A, V) = \bigl(\triangleleft ', \, \triangleright ', \,
\leftharpoonup ', \, \rightharpoonup ', \, f ', \, \cdot ' \bigl)$
be two algebra extending structures of $A$ through $V$ and $A
\ltimes V$, respectively $A \ltimes ' V$ the associated unified
products. Then there exists a bijection between the set of all
morphisms of algebras $\psi: A \ltimes V \to A \ltimes ' V$which
stabilize $A$ and the set of pairs $(r, \, v)$, where $r: V \to
A$, $v: V \to V$ are linear maps satisfying the following
compatibility conditions for any $a \in A$, $x$, $y \in V$:
\begin{enumerate}
\item[(M1)] $r(x \cdot y) = r(x)r(y) + f ' (v(x),\, v(y)) - f(x,
\, y) + r(x) \leftharpoonup ' v(y) + v(x) \rhd ' r(y)$;
\item[(M2)] $v(x \cdot y) = r(x) \rightharpoonup ' v(y) + v(x)
\lhd ' r(y) + v(x) \cdot ' v(y)$; \item[(M3)] $r(x \lhd a) = r(x)
a - x \rhd a + v(x) \rhd ' a$; \item[(M4)] $v(x \lhd a) = v(x)
\lhd ' a$; \item[(M5)] $r(a \rightharpoonup x) = ar(x) - a
\leftharpoonup x + a \leftharpoonup ' v(x)$; \item[(M6)] $v(a
\rightharpoonup x) = a \rightharpoonup ' v(x)$
\end{enumerate}
Under the above bijection the morphism of algebras $\psi =
\psi_{(r, v)}: A \ltimes V \to A \ltimes ' V$ corresponding to
$(r, v)$ is given by $\psi(a, x) = (a + r(x), v(x))$, for all $a
\in A$ and $x \in V$. Moreover, $\psi = \psi_{(r, v)}$ is an
isomorphism if and only if $v: V \to V$ is an isomorphism and
$\psi = \psi_{(r, v)}$ co-stabilizes $V$ if and only if $v = {\rm
Id}_V$.
\end{lemma}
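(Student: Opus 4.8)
The plan is to first determine the shape of every linear map that stabilizes $A$, then to reduce the requirement that such a map be an algebra morphism to a handful of computations on generators, and finally to dispatch the two addenda about isomorphisms and $V$-costabilization by elementary linear algebra.

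First I would record that a linear map $\psi \colon A \ltimes V \to A \ltimes ' V$ stabilizes $A$ precisely when $\psi(a, 0) = (a, 0)$ for all $a \in A$, since both inclusions are $a \mapsto (a,0)$. Writing $\psi(a, x) = (\psi_1(a, x), \psi_2(a, x))$ and combining linearity with $\psi(a, 0) = (a, 0)$ forces $\psi_1(a, x) = a + r(x)$ and $\psi_2(a, x) = v(x)$, where $r(x) := \psi_1(0, x) \colon V \to A$ and $v(x) := \psi_2(0, x) \colon V \to V$ are linear; conversely any such pair $(r, v)$ defines a linear map stabilizing $A$. This already produces the stated formula $\psi_{(r,v)}(a, x) = (a + r(x), v(x))$ and the bijection at the level of linear maps.

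Next I would decide when $\psi_{(r,v)}$ is a morphism of algebras. The unit is preserved for free, as $\psi(1_A, 0) = (1_A, 0)$ because $\psi$ stabilizes $A$. For multiplicativity the key observation -- the same trick used in the proofs of \thref{1} and \thref{classif} -- is that the defect $F(u, w) := \psi(u \bullet w) - \psi(u) \bullet \psi(w)$ is bilinear in $(u, w)$, so $\psi$ is multiplicative if and only if $F$ vanishes on the spanning set $\{(a, 0) : a \in A\} \cup \{(0, x) : x \in V\}$, i.e. on the four generator pairs. The pair $\bigl((a,0), (b,0)\bigr)$ is automatic, since $A$ is a common subalgebra fixed by $\psi$. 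Evaluating the remaining three by means of \equref{001a} and \equref{002b}, the pair $\bigl((0, x), (0, y)\bigr)$ yields exactly (M1) in the $A$-component and (M2) in the $V$-component, the pair $\bigl((0, x), (b, 0)\bigr)$ yields (M3) and (M4), and the pair $\bigl((a, 0), (0, x)\bigr)$ yields (M5) and (M6). This completes the bijection.

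For the two final assertions I would argue by linear algebra. The map $\psi_{(r,v)}$ fixes the subspace $A \times \{0\}$ and acts on it as the identity, hence it descends to the quotient $(A \ltimes V)/(A \times \{0\}) \cong V$, where the induced map is exactly $v$; since a linear automorphism preserving a subspace induces an automorphism on the quotient, $\psi$ bijective forces $v$ bijective, and conversely if $v$ is invertible then $(b, y) \mapsto \bigl(b - r(v^{-1}(y)),\, v^{-1}(y)\bigr)$ is a two-sided inverse. As the set-theoretic inverse of a bijective algebra morphism is again an algebra morphism, $\psi_{(r,v)}$ is an algebra isomorphism if and only if $v$ is an isomorphism. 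Finally, writing $q(a, x) = x$ for the canonical projection, $\psi$ co-stabilizes $V$ means $q \circ \psi = q$; since $q(\psi(a, x)) = v(x)$, this holds if and only if $v = {\rm Id}_V$. The only laborious part of the whole argument is matching the three generator-pair products against (M1)--(M6), but this is purely mechanical once the bilinearity reduction is in place, so I anticipate no real conceptual obstacle.
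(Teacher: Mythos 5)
Your proposal is correct and follows essentially the same route as the paper: identify the shape $\psi(a,x)=(a+r(x),v(x))$ forced by $A$-stabilization, reduce multiplicativity to the generator pairs, and treat the isomorphism and co-stabilization claims by elementary linear algebra (the paper checks injectivity of $v$ directly from $\psi(-r(x),x)=(0,0)$ rather than via the quotient, but this is an immaterial variation). Your assignment of the pair $\bigl((0,x),(0,y)\bigr)$ to (M1)--(M2) is the correct one; the paper's prose attributes these to the pair $(a,0)$, $(b,0)$, which is evidently a typo since that product lies in $A\times\{0\}$ and is preserved automatically.
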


\begin{proof} For a linear map
$\psi: A \ltimes V \to A \ltimes ' V$ which stabilizes $A$ we have
$\psi(a, 0) = (a, 0)$ for all $a \in A$. Therefore, $\psi$ is
uniquely determined by two linear maps $r: V \to A$, $v: V \to V$
such that $\psi(0, x) = (r(x), v(x))$ for all $x \in V$. Then, for
all $a \in A$ and $x \in V$ we have $\psi(a, x) = (a + r(x),
v(x))$. Let $\psi = \psi_{(r,v)}$ be such a linear map. We will
prove that $\psi$ is an algebra map if and only if the
compatibility conditions (M1)-(M6) hold. It is enough to prove
that the following compatibility holds for all generators of $A
\ltimes V$:
\begin{equation}\eqlabel{algebramap}
\psi\bigl((d, w) \cdot (e, t)\bigl) = \psi\bigl((d, w)\bigl) \cdot
' \psi\bigl((e, t)\bigl)
\end{equation}
By a straightforward computation it follows that
\equref{algebramap} holds for the pair $(a, 0)$, $(b, 0)$ if and
only if (M1) and (M2) are fulfilled while \equref{algebramap}
holds for the pair $(0, x)$, $(a, 0)$ if and only if (M3) and (M4)
are satisfied. Finally, \equref{algebramap} holds for the pair
$(a, 0)$, $(0, x)$ if and only if (M5) and (M6) hold.

Assume now that $v$ is bijective. Then $\psi_{(r, v)}$ is an
isomorphism of algebras with the inverse given by $\psi_{(r,
v)}^{-1} (a,x) = (a - r \bigl(v^{-1}(x)\bigl),\, v^{-1}(x))$ for
all $a \in A$ and $x \in V$. Conversely, assume that
$\psi_{(r,v)}$ is bijective. Then $v$ is obviously surjective.
Consider now $x \in V$ such that $v(x) = 0$. We obtain
$\psi_{(r,v)}(0, 0) = (0, 0) = (0, v(x)) = \psi_{(r, v)}(-r(x),
x)$. As $\psi_{(r,v)}$ is bijective we get $x = 0$. Therefore $v$
is also injective and hence bijective. Finally, it is
straightforward to see that $\psi$ co-stabilizes $V$ if and only
if $v = Id$ and the proof is now finished.
\end{proof}

In order to construct the object that parameterizes ${\rm Extd} \,
(E, A)$ we need the following:

\begin{definition}\delabel{echiaa}
Let $A$ be an algebra and $V$ a vector space. Two algebra
extending structures of $A$ by $V$, $\Omega(A, V) =
\bigl(\triangleleft, \, \triangleright, \, \leftharpoonup, \,
\rightharpoonup, \, f, \, \cdot \bigl)$ and $\Omega(A, V) =
\bigl(\triangleleft ', \, \triangleright ', \, \leftharpoonup ',
\, \rightharpoonup ', \, f ', \, \cdot ' \bigl)$ are called
\emph{equivalent}, and we denote this by $\Omega(A, V) \equiv
\Omega'(A, V)$, if there exists a pair $(r, v)$ of linear maps,
where $r: V \to A$ and $v \in {\rm Aut}_{k}(V)$ such that
$\bigl(\triangleleft ', \, \triangleright ', \, \leftharpoonup ',
\, \rightharpoonup ', \, f ', \, \cdot ' \bigl)$ is implemented
from $\bigl( \triangleleft, \, \triangleright, \, \leftharpoonup,
\, \rightharpoonup, \, f, \, \cdot \bigl)$ using $(r, v)$ via:
\begin{eqnarray*}
a \rightharpoonup ' x &=& v\bigl(a \rightharpoonup v^{-1}(x)\bigl)\\
a \leftharpoonup ' x &=& r\bigl(a \rightharpoonup v^{-1}(x)\bigl)
- a r\bigl(v^{-1}(x)\bigl) + a \leftharpoonup v^{-1}(x)\\
x \lhd ' a &=& v \bigl(v^{-1}(x) \lhd a\bigl)\\
x \rhd ' a &=& r \bigl(v^{-1}(x) \lhd a\bigl) -
r\bigl(v^{-1}(x)\bigl) a + v^{-1}(x) \rhd a\\
x \cdot ' y &=& v\bigl(v^{-1}(x) \cdot v^{-1}(y)\bigl) -
v\Bigl(r\bigl(v^{-1}(x)\bigl) \rightharpoonup v^{-1}(y)\Bigl) -
v\Bigl(v^{-1}(x) \lhd r\bigl(v^{-1}(y)\bigl)\Bigl)\\
f ' (x, y) &=& r\bigl(v^{-1}(x) \cdot v^{-1}(y)\bigl) +
f(v^{-1}(x),\, v^{-1}(y)) - r\Bigl(r\bigl(v^{-1}(x)\bigl)
\rightharpoonup v^{-1}(y)\Bigl) - \\
&& - r\bigl(v^{-1}(x)\bigl) \leftharpoonup v^{-1}(y) -
r\Bigl(v^{-1}(x) \lhd r\bigl(v^{-1}(y)\bigl)\Bigl) +
r\bigl(v^{-1}(x)\bigl) r\bigl(v^{-1}(y)\bigl)-\\ && - v^{-1}(x)
\rhd r\bigl(v^{-1}(y)\bigl)
\end{eqnarray*}
for all $a \in A$, $x$, $y \in V$.
\end{definition}

On the other hand, in order to parameterize ${\rm Extd}' \, (E,
A)$ we need the following:

\begin{definition}\delabel{echiaabb}
Let $A$ be an algebra and $V$ a vector space. Two algebra
extending structures $\Omega(A, V) = \bigl(\triangleleft, \,
\triangleright, \, \leftharpoonup, \, \rightharpoonup, \, f, \,
\cdot \bigl)$ and $\Omega(A, V) = \bigl(\triangleleft ', \,
\triangleright ', \, \leftharpoonup ', \, \rightharpoonup ', \, f
', \, \cdot ' \bigl)$ are called \emph{cohomologous}, and we
denote this by $\Omega(A, V) \approx \Omega'(A, V)$ if and only if
$\triangleleft' = \triangleleft$, $\rightharpoonup' \, = \,
\rightharpoonup$ and there exists a linear map $r: V \to A$ such
that
\begin{eqnarray*}
a \leftharpoonup ' x &=& r(a \rightharpoonup x)
- a r(x) + a \leftharpoonup x\\
x \rhd ' a &=& r(x \lhd a) -
r(x) a + x \rhd a\\
x \cdot ' y &=& x \cdot y - r(x) \rightharpoonup y -
x \lhd r(y)\\
f ' (x, y) &=& r(x \cdot y) + f(x,\, y) - r\Bigl(r(x)
\rightharpoonup y\Bigl)  - r(x) \leftharpoonup y - r\bigl(x \lhd
r(y)\bigl)+\\ &&+ r(x) r(y) - x \rhd r(y)
\end{eqnarray*}
for all $a \in A$, $x$, $y \in V$.
\end{definition}

As a conclusion of this section, the theoretical answer to the
ES-problem now follows:

\begin{theorem}\thlabel{main1}
Let $A$ be an algebra, $E$ a vector space which contains $A$ as a
subspace and $V$ a complement of $A$ in $E$. Then:

$(1)$ $\equiv$ is an equivalence relation on the set ${\mathcal A}
{\mathcal E} (A, V)$ of all algebra extending structures of $A$
through $V$. If we denote by ${\mathcal A}{\mathcal H}^{2}_{A} \,
(V, \, A) := {\mathcal A} {\mathcal E} (A, V)/ \equiv $, then the
map
$$
{\mathcal A}{\mathcal H}^{2}_{A} \, (V, \, A) \to {\rm Extd} \,
(E, A), \qquad \overline{(\triangleleft, \, \triangleright, \,
\leftharpoonup, \, \rightharpoonup, \, f, \, \cdot)} \,
\longmapsto \, \bigl(A \ltimes V, \, \cdot \bigl)
$$
is a bijection between ${\mathcal A}{\mathcal H}^{2}_{A} \, (V, \,
A)$ and the isomorphism classes of all algebra structures on $E$
that contain and stabilize $A$ as a subalgebra.

$(2)$ $\approx $ is an equivalence relation on the set ${\mathcal
A} {\mathcal E} (A, V)$. If we denote by ${\mathcal A}{\mathcal
H}^{2} \, (V, \, A) := {\mathcal A} {\mathcal E} (A, V)/ \approx
$, then the map
$$
{\mathcal A}{\mathcal H}^{2} \, (V, \, A) \to {\rm Extd}' \, (E,
A), \qquad \overline{\overline{(\triangleleft, \, \triangleright,
\, \leftharpoonup, \, \rightharpoonup, \, f, \, \cdot)}} \,
\longmapsto \, \bigl(A \ltimes V, \, \cdot \bigl)
$$
is a bijection between ${\mathcal A}{\mathcal H}^{2} \, (V, \, A)$
and the isomorphism classes of all algebra structures on $E$ which
stabilize $A$ and co-stabilize $V$.\footnote{
$\overline{(\triangleleft, \, \triangleright, \, \leftharpoonup,
\, \rightharpoonup, \, f, \, \cdot)}$ (resp.
$\overline{\overline{(\triangleleft, \, \triangleright, \,
\leftharpoonup, \, \rightharpoonup, \, f, \, \cdot)}}$ denotes the
the equivalence class of $(\triangleleft, \, \triangleright, \,
\leftharpoonup, \, \rightharpoonup, \, f, \, \cdot)$ via $\equiv$
(resp. $\approx$). }
\end{theorem}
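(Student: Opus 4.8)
The plan is to reduce both parts to a single dictionary between the combinatorial data of \deref{echiaa} and \deref{echiaabb} and the isomorphisms of unified products classified in \leref{morfismuni}. Precisely, I will show that for any $\Omega(A,V), \Omega'(A,V) \in {\mathcal A}{\mathcal E}(A,V)$ one has $\Omega(A,V) \equiv \Omega'(A,V)$ if and only if there is an algebra isomorphism $\psi: A \ltimes V \to A \ltimes ' V$ stabilizing $A$, and $\Omega(A,V) \approx \Omega'(A,V)$ if and only if such a $\psi$ can moreover be chosen to co-stabilize $V$. Granting this, the equivalence-relation assertions and both bijections become essentially formal.

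To prove the dictionary, recall from \leref{morfismuni} that every algebra map $A \ltimes V \to A \ltimes ' V$ stabilizing $A$ equals $\psi_{(r,v)}$ for a unique pair $(r,v)$ satisfying (M1)--(M6), and is an isomorphism exactly when $v \in {\rm Aut}_k(V)$. Starting from $\Omega$ and an invertible $v$, I would solve (M6), (M4), (M5) and (M3) for the primed actions $\rightharpoonup '$, $\lhd '$, $\leftharpoonup '$ and $\rhd '$, and then (M2) and (M1) for $\cdot '$ and $f '$; back-substituting the already-solved actions reproduces verbatim the six defining formulas of \deref{echiaa}. Thus $\Omega '$ is implemented from $\Omega$ by $(r,v)$ in the sense of \deref{echiaa} precisely when $\psi_{(r,v)}$ is an algebra isomorphism, and the resulting $\Omega '$ automatically lies in ${\mathcal A}{\mathcal E}(A,V)$, being the transport of the unital associative product of $A \ltimes V$ along the linear isomorphism $\psi_{(r,v)}$ (so that \thref{1} applies). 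Specializing to $v = {\rm Id}_V$ collapses these formulas to those of \deref{echiaabb}, which by the last assertion of \leref{morfismuni} is exactly the co-stabilizing case, yielding the second half of the dictionary.

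The dictionary makes $\equiv$ and $\approx$ equivalence relations at once: reflexivity is $\psi_{(0, {\rm Id})} = {\rm Id}$; symmetry follows because $\psi_{(r,v)}^{-1} = \psi_{(-r \circ v^{-1}, \, v^{-1})}$ again stabilizes $A$ (and co-stabilizes $V$ when $v = {\rm Id}_V$); transitivity follows because a composite of such isomorphisms is of the same type. For the bijections I would fix the linear isomorphism $\varphi_0: A \ltimes V \to E$, $\varphi_0(a,x) = a + x$, and transport the unified product along $\varphi_0$ to obtain the algebra structure on $E$ assigned to $\overline{(\triangleleft, \triangleright, \leftharpoonup, \rightharpoonup, f, \cdot)}$; in it $A$ is a subalgebra. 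If $\Omega, \Omega'$ induce $\equiv$-equivalent structures on $E$ via a stabilizing map $\tilde\psi$, then $\varphi_0^{-1} \circ \tilde\psi \circ \varphi_0$ is a stabilizing isomorphism $A \ltimes V \to A \ltimes ' V$, whence $\Omega \equiv \Omega'$ by the dictionary, and conversely; this gives well-definedness and injectivity, the co-stabilizing variant handling part $(2)$. Surjectivity is \thref{classif}: for any algebra structure $\ast$ on $E$ having $A$ as a subalgebra, I take the retraction $p$ to be the projection of $E = A + V$ onto $A$ along the fixed complement $V$, so that \thref{classif} returns an $\Omega \in {\mathcal A}{\mathcal E}(A,V)$ together with the isomorphism $\varphi(a,x) = a + x = \varphi_0(a,x)$; hence $\ast$ is exactly the transport of the product of $A \ltimes V$ along $\varphi_0$, i.e. the image of $\overline{\Omega}$.

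The only step that is not purely formal is the construction of the dictionary, namely matching the six formulas of \deref{echiaa} against the system (M1)--(M6). The matching is a direct substitution, but it has to be carried out in the correct order---the four actions first, then $\cdot '$ and $f '$---while carefully tracking the $v^{-1}$-twists, and this is where all the computational bookkeeping resides. Once the dictionary is secured, the equivalence-relation statements and both bijections are formal consequences of \leref{morfismuni} and \thref{classif}.
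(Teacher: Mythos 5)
Your proposal is correct and follows essentially the same route as the paper: both rest on the observation that $\Omega(A,V) \equiv \Omega'(A,V)$ (resp.\ $\approx$) in the sense of \deref{echiaa} (resp.\ \deref{echiaabb}) holds precisely when there is an algebra isomorphism $A \ltimes V \to A \ltimes ' V$ stabilizing $A$ (resp.\ also co-stabilizing $V$), as supplied by \leref{morfismuni}, combined with \thref{classif} for surjectivity. Your write-up merely makes explicit the substitution order for solving (M1)--(M6) and the transport along $\varphi_0$, which the paper leaves implicit.
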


\begin{proof} The proof follows
from \thref{1}, \thref{classif} and \leref{morfismuni} once we
observe that $\Omega(\mathfrak{g}, V) \equiv \Omega'(\mathfrak{g},
V)$ in the sense of \deref{echiaa} if and only if there exists an
isomorphism of algebras $\psi: A \ltimes V \to A \ltimes ' V$
which stabilizes $A$. Therefore, $\equiv$ is an equivalence
relation on the set ${\mathcal A} {\mathcal E} (A, V)$ and the
first part follows. In the same way $\Omega(\mathfrak{g}, V)
\approx \Omega'(\mathfrak{g}, V)$ as defined in \deref{echiaabb}
if and only if there exists an isomorphism of algebras $\psi: A
\ltimes V \to A \ltimes ' V$ which stabilizes $A$ and
co-stabilizes $V$ and this proves the second part of the theorem.
\end{proof}

\section{The Galois group of algebra extensions and
special cases of unified products}\selabel{cazurispeciale}

This section contains the first applications of the results
presented in \seref{unifiedprod}. To start with, we will describe
the Galois group ${\rm Gal} \, (B/A)$ of an arbitrary extension $A
\subseteq B$ of non-commutative algebras as a certain subgroup of
a semidirect product of groups ${\rm GL}_k (V) \rtimes {\rm Hom}_k
(V, \, A)$, where the vector space $V$ is a fixed complement of
$A$ in $B$, i.e. $V \cong B/A$ as vector spaces. Then we deal with
several special cases of unified products (explicit examples of
unified products will be provided in \seref{exemple}). We
emphasize the problem for which each of these products is
responsible. We use the following convention: if one of the maps
$\triangleright$, $\leftharpoonup$, $f$ or $\cdot$ of an extending
datum is trivial then we will omit it from the system $\Omega(A,
V) = \bigl(\triangleleft, \, \triangleright, \, \leftharpoonup, \,
\rightharpoonup, \, f, \, \cdot \bigl)$.

\subsection*{The Galois group of an algebra extension}
Let $A \subseteq B$ be an extension of algebras. We define the
\emph{Galois group} ${\rm Gal} \, (B/A)$ of this extension as a
subgroup of ${\rm Aut}_{\rm Alg} (B)$ consisting of all algebra
automorphisms of $B$ that fix $A$, i.e.
$$
{\rm Gal} \, (B/A) := \{ \sigma \in {\rm Aut}_{\rm Alg} (B) \, |
\, \sigma (a) = a, \, \forall \, a\in A \}
$$
For a given subgroup $G \leq {\rm Aut}_{\rm Alg} (B)$ we denote by
$B^G$ its subalgebra of invariants: $B^G := \{b \in B \, | \,
\sigma (b) = b, \, \forall \, \sigma \in G \}$. Of course, we have
that $A \subseteq B^{{\rm Gal} (B/A)}$. As in the classical Galois
theory we say that $A \subseteq B$ is a \emph{Galois extension} if
$B^{{\rm Gal} (B/A)} = A$.

Let $A$ and $V$ be two vector spaces and denote ${\mathbb G}_A^V
:= {\rm Hom}_k (V, \, A) \times {\rm GL}_k (V)$, where ${\rm GL}_k
(V) = {\rm Aut}_k (V)$ is the group of all linear automorphisms of
$V$. Then, we can easily prove that ${\mathbb G}_A^V$ has a group
structure with the multiplication given for any $r$, $r'\in {\rm
Hom}_k (V, \, A)$ and $\sigma$, $\sigma' \in {\rm GL}_k (V)$ by:
\begin{equation} \eqlabel{grupstrc}
(r, \, \sigma) \cdot (r', \, \sigma') := (r' + r \circ \sigma', \,
\sigma \circ \sigma')
\end{equation}
The unit of ${\mathbb G}_A^V$ is $1_{{\mathbb G}_A^V} = (0, \,
{\rm Id}_V)$ and the inverse of $(r, \, \sigma)$ is given by $(r,
\, \sigma)^{-1} := (-r \circ \sigma^{-1}, \, \sigma^{-1})$.
Moreover, one can easily see that ${\rm GL}_k (V) \cong \{0\}
\times {\rm GL}_k (V)$ is a subgroup of ${\mathbb G}_A^V$ and the
abelian group ${\rm Hom}_k (V, \, A) \cong {\rm Hom}_k (V, \, A)
\times \{{\rm Id}_V\}$ is a normal subgroup of ${\mathbb G}_A^V$
since $(r, \, \sigma) \cdot (r', \, {\rm Id}_V) \cdot (r, \,
\sigma)^{-1} = (r' \circ \sigma^{-1}, \, {\rm Id}_V)$, for all
$r$, $r' \in {\rm Hom}_k (V, \, A)$ and $\sigma \in {\rm GL}_k
(V)$. On the other hand, the relation $(r, \, \sigma) = (0, \,
\sigma) \cdot (r, \, {\rm Id}_V)$ gives an exact factorization
${\mathbb G}_A^V = {\rm GL}_k (V) \, \cdot \, {\rm Hom}_k (V, \,
A) $ of the group ${\mathbb G}_A^V$ trough the subgroup ${\rm
GL}_k (V)$ and the abelian normal subgroup ${\rm Hom}_k (V, \,
A)$. These observations show that ${\mathbb G}_A^V$ is a
semidirect product ${\rm GL}_k (V) \rtimes  {\rm Hom}_k (V, \, A)$
of groups, where the semidirect product is written in the right
hand side convention.

Now let $A \subseteq B$ be an extension of algebras. Assume that
the codimension of $A$ in $B$ is $\mathfrak{c}$ and let $V$ be a
vector space of dimension $\mathfrak{c}$ that is a complement of
$A$ in $B$. It follows from \thref{classif} that $B \cong A
\ltimes V$, for a canonical algebra extending structure $\Omega(A,
V) = \bigl(\triangleleft, \, \triangleright, \, \leftharpoonup, \,
\rightharpoonup, \, f, \, \cdot \bigl)$ of $A$ by $V$ associated
to a given linear retraction $p: B \to A$ of the inclusion map $A
\hookrightarrow B$. We fix such an algebra extending structure
$\Omega(A, V) = \bigl(\triangleleft, \, \triangleright, \,
\leftharpoonup, \, \rightharpoonup, \, f, \, \cdot \bigl)$ and we
denote by ${\mathbb G}_A^V \, \bigl(\triangleleft, \,
\triangleright, \, \leftharpoonup, \, \rightharpoonup, \, f, \,
\cdot \bigl)$ be the set of all pairs $(r, \, \sigma) \in {\mathbb
G}_A^V = {\rm Hom}_k (V, \, A) \times {\rm GL}_k (V)$ satisfying
the following compatibility conditions:
\begin{eqnarray*}
&& f(x, \, y) = f(\sigma(x), \, \sigma (y)) + r(x)r(y) - r(x \cdot y) +
r(x) \leftharpoonup  \sigma (y) + \sigma (x) \rhd  r(y) \\
&& \sigma (x \cdot y) = \sigma (x) \cdot \sigma (y) + r(x)
\rightharpoonup  \sigma (y) +
\sigma(x) \lhd  r(y) \\
&& r(x \lhd a) = r(x) a - x \rhd a + \sigma (x) \rhd  a \\
&& r(a \rightharpoonup x) = a\, r(x) - a \leftharpoonup x + a
\leftharpoonup  \sigma (x) \\
&& \sigma (x \lhd a) = \sigma (x) \lhd  a, \qquad \sigma (a
\rightharpoonup x) = a \rightharpoonup  \sigma (x)
\end{eqnarray*}
for all $a \in A$, $x$, $y \in V$. Using \thref{classif} and
\leref{morfismuni} we obtain an explicit description for the
Galois group of an algebra extension.

\begin{corollary} \colabel{galgrup}
Let $A \subseteq B$ be an extension of algebras. We fix $V$ a
vector space and $\Omega(A, V) = \bigl(\triangleleft, \,
\triangleright, \, \leftharpoonup, \, \rightharpoonup, \, f, \,
\cdot \bigl)$ an algebra extending structure of $A$ by $V$ such
that $B \cong A \ltimes V$. Then, ${\mathbb G}_A^V \,
\bigl(\triangleleft, \, \triangleright, \, \leftharpoonup, \,
\rightharpoonup, \, f, \, \cdot \bigl)$ is a subgroup of the
semidirect product of groups ${\mathbb G}_A^V = {\rm GL}_k (V)
\rtimes {\rm Hom}_k (V, \, A)$ and there exists an isomorphism of
groups ${\rm Gal} (B/A) \cong {\mathbb G}_A^V \,
\bigl(\triangleleft, \, \triangleright, \, \leftharpoonup, \,
\rightharpoonup, \, f, \, \cdot \bigl)$.
\end{corollary}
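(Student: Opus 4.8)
The plan is to realize $\mathrm{Gal}(B/A)$ as the automorphisms of $A \ltimes V$ that fix $A$, and then apply \leref{morfismuni} to parametrize these explicitly. First I would transport the problem along the fixed isomorphism $B \cong A \ltimes V$ of \thref{classif}: since this isomorphism stabilizes $A$, it induces a group isomorphism $\mathrm{Gal}(B/A) \cong \mathrm{Gal}(A \ltimes V / A)$, where the latter is the group of algebra automorphisms of $A \ltimes V$ that stabilize $A$ in the sense of \equref{diagrama1}. So it suffices to identify $\mathrm{Gal}(A \ltimes V / A)$ with ${\mathbb G}_A^V \,\bigl(\triangleleft, \, \triangleright, \, \leftharpoonup, \, \rightharpoonup, \, f, \, \cdot \bigl)$ as groups.

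The second step is to invoke \leref{morfismuni} in the case where the source and target extending structures coincide (so the primed and unprimed data are equal). That lemma gives a bijection between algebra morphisms $\psi: A \ltimes V \to A \ltimes V$ that stabilize $A$ and pairs $(r, v)$ with $r: V \to A$ linear and $v: V \to V$ linear satisfying (M1)--(M6); moreover $\psi_{(r,v)}$ is an isomorphism precisely when $v$ is bijective, i.e. $v = \sigma \in \mathrm{GL}_k(V)$. Setting the primed maps equal to the unprimed ones in (M1)--(M6) and rewriting $v$ as $\sigma$, the six conditions become exactly the six displayed compatibility conditions defining ${\mathbb G}_A^V \,\bigl(\triangleleft, \, \triangleright, \, \leftharpoonup, \, \rightharpoonup, \, f, \, \cdot \bigl)$ (after the harmless sign rearrangement that moves terms across the equalities). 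This yields a bijection $\Phi: \mathrm{Gal}(A \ltimes V / A) \to {\mathbb G}_A^V \,\bigl(\triangleleft, \, \triangleright, \, \leftharpoonup, \, \rightharpoonup, \, f, \, \cdot \bigl)$ sending $\psi_{(r,\sigma)}$ to $(r, \sigma)$, and in particular shows that the set of such pairs is nonempty and stable under the operations it inherits.

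The third and genuinely load-bearing step is to check that $\Phi$ is a \emph{group} homomorphism, where the target carries the multiplication \equref{grupstrc}. Concretely, if $\psi_{(r,\sigma)}$ and $\psi_{(r',\sigma')}$ are two such automorphisms, I would compute the composite $\psi_{(r,\sigma)} \circ \psi_{(r',\sigma')}$ on a general element $(a,x)$ using $\psi_{(r,v)}(a,x) = (a + r(x), v(x))$ from \leref{morfismuni}, obtaining $(a + r'(x) + r(\sigma'(x)), \, \sigma(\sigma'(x)))$; reading off the associated pair gives $(r' + r \circ \sigma', \, \sigma \circ \sigma')$, which matches \equref{grupstrc} exactly. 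Since composition of automorphisms is the group law on $\mathrm{Gal}(A \ltimes V / A)$ and $\Phi$ is a bijection intertwining it with \equref{grupstrc}, $\Phi$ is an isomorphism of groups; this simultaneously confirms that ${\mathbb G}_A^V \,\bigl(\triangleleft, \, \triangleright, \, \leftharpoonup, \, \rightharpoonup, \, f, \, \cdot \bigl)$ is closed under multiplication and inversion, hence a subgroup of ${\mathbb G}_A^V$.

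The main obstacle I anticipate is purely bookkeeping rather than conceptual: verifying that the composition formula for the pairs genuinely reproduces \equref{grupstrc} requires keeping the right-hand convention for the semidirect product straight (the factor $r \circ \sigma'$ rather than $r' \circ \sigma$), and confirming that the defining compatibility conditions of ${\mathbb G}_A^V \,\bigl(\triangleleft, \, \triangleright, \, \leftharpoonup, \, \rightharpoonup, \, f, \, \cdot \bigl)$ are the verbatim specialization of (M1)--(M6) after setting the two extending structures equal. Once the dictionary between $(r, v)$ and $(r, \sigma)$ is fixed, both checks are direct, and the conceptual content is entirely carried by \thref{classif} and \leref{morfismuni}.
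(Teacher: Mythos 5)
Your proposal is correct and follows essentially the same route as the paper: the paper's proof likewise identifies ${\rm Gal}(A\ltimes V/A)$ with the pairs $(r,\sigma)$ of \leref{morfismuni} specialized to equal extending structures, and concludes by the same composition computation $\psi_{(r,\sigma)}\circ\psi_{(r',\sigma')}=\psi_{(r'+r\circ\sigma',\,\sigma\circ\sigma')}$ matching \equref{grupstrc}. Your explicit verification of the composite on $(a,x)$ and your remark that this simultaneously establishes the subgroup claim are exactly the content the paper leaves as "the proof is finished once we observe".
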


\begin{proof}
Indeed, it follows from \leref{morfismuni} that there exists a
bijective correspondence between elements of ${\rm Gal} \,
(A\ltimes V /A)$ and the set ${\mathbb G}_A^V \,
\bigl(\triangleleft, \, \triangleright, \, \leftharpoonup, \,
\rightharpoonup, \, f, \, \cdot \bigl)$ defined such that
$\psi_{(r, \sigma)} \in {\rm Gal} \, (A\ltimes V /A)$
corresponding to $(r, \sigma)$ is given by $\psi_{(r, \, \sigma)}
(a, x) = (a + r(x), \, \sigma(x))$, for all $a \in A$ and $x \in
V$. The proof is finished once we observe that $\psi_{(r, \,
\sigma)} \circ \psi_{(r', \, \sigma')} = \psi_{(r' + r \circ
\sigma', \, \sigma \circ \sigma')}$, for all $(r, \, \sigma)$,
$(r', \, \sigma') \in {\mathbb G}_A^V \, \bigl(\triangleleft, \,
\triangleright, \, \leftharpoonup, \, \rightharpoonup, \, f, \,
\cdot \bigl)$.
\end{proof}

The Galois group ${\mathbb G}_A^V \, \bigl(\triangleleft, \,
\triangleright, \, \leftharpoonup, \, \rightharpoonup, \, f, \,
\cdot \bigl)$ has an abelian normal subgroup corresponding to
those automorphisms of $A \ltimes V$ which stabilize $A$ and
co-stabilize $V$. We denote this subgroup by ${\mathbb H}_A^V \,
\bigl(\triangleleft, \, \triangleright, \, \leftharpoonup, \,
\rightharpoonup, \, f, \, \cdot \bigl)$ -- it can be identified
with the set of all linear maps $r \in {\rm Hom}_k (V, \, A)$ such
that for any $a \in A$, $x$, $y \in V$ :
\begin{eqnarray*}
&& r(x \cdot y) = r(x)r(y) + r(x) \leftharpoonup  y + x \rhd r(y),
\quad r(x) \rightharpoonup y = -  x \lhd  r(y) \\
&& r(x \lhd a) = r(x) a, \qquad  r(a \rightharpoonup x) = a\, r(x)
\end{eqnarray*}

In the finite dimensional case we obtain:

\begin{corollary} \colabel{galgrupdim}
Let $A \subseteq B$ be an extension of algebras such that ${\rm
dim}_k (A) = n$ and ${\rm dim}_k (B) = n + m$, for two positive
integers $m$ and $n$. Then there exists a canonical embedding
${\rm Gal} \, (B/A) \hookrightarrow {\rm GL} (m, \, k) \rtimes
{\rm M}_{n\times m} (k)$, where ${\rm M}_{n\times m} (k)$ is the
additive group of $(n\times m)$-matrices over $k$.
\end{corollary}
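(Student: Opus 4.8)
The plan is to read this off directly from \coref{galgrup} by fixing bases and translating the abstract semidirect product ${\mathbb G}_A^V = {\rm GL}_k (V) \rtimes {\rm Hom}_k (V, \, A)$ into its matrix incarnation. First I would note that, since ${\rm dim}_k (A) = n$ and ${\rm dim}_k (B) = n + m$, any complement $V$ of $A$ in $B$ has ${\rm dim}_k (V) = m$. Fixing such a $V$ together with the canonical algebra extending structure $\Omega(A, V) = \bigl(\triangleleft, \, \triangleright, \, \leftharpoonup, \, \rightharpoonup, \, f, \, \cdot \bigr)$ of \thref{classif} with $B \cong A \ltimes V$, \coref{galgrup} yields an isomorphism of groups ${\rm Gal} \, (B/A) \cong {\mathbb G}_A^V \, \bigl(\triangleleft, \, \triangleright, \, \leftharpoonup, \, \rightharpoonup, \, f, \, \cdot \bigr)$ onto a subgroup of ${\mathbb G}_A^V$. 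Thus it is enough to produce a group isomorphism ${\mathbb G}_A^V \cong {\rm GL} (m, \, k) \rtimes {\rm M}_{n \times m} (k)$ and compose the two.

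Next I would fix a $k$-basis of $V$ and a $k$-basis of $A$, which identify $V \cong k^m$ and $A \cong k^n$ as vector spaces. Under these choices every $\sigma \in {\rm GL}_k (V)$ is recorded by its matrix $S \in {\rm GL} (m, \, k)$ and every $r \in {\rm Hom}_k (V, \, A)$ by its matrix $R \in {\rm M}_{n \times m} (k)$; the assignment $(r, \, \sigma) \mapsto (R, \, S)$ is then a bijection from ${\mathbb G}_A^V = {\rm Hom}_k (V, \, A) \times {\rm GL}_k (V)$ onto ${\rm M}_{n \times m} (k) \times {\rm GL} (m, \, k)$.

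The key step is to verify that this bijection respects the group laws. Recall from \equref{grupstrc} that $(r, \, \sigma) \cdot (r', \, \sigma') = (r' + r \circ \sigma', \, \sigma \circ \sigma')$. Under the identification $\sigma \circ \sigma'$ becomes the matrix product $S S'$, while the precomposition $r \circ \sigma'$ becomes $R S'$; hence the product transports to $(R' + R S', \, S S')$, which is exactly the multiplication defining the semidirect product ${\rm GL} (m, \, k) \rtimes {\rm M}_{n \times m} (k)$. Composing this isomorphism with the embedding ${\rm Gal} \, (B/A) \hookrightarrow {\mathbb G}_A^V$ of \coref{galgrup} then delivers the asserted canonical embedding. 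I do not expect any genuine obstacle here: once the bases are chosen the argument collapses to the standard fact that precomposition of linear maps corresponds to matrix multiplication, and the only point requiring care is matching the precomposition in \equref{grupstrc} with the correct (right, rather than left) multiplication convention for the matrix semidirect product.
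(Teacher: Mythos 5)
Your proposal is correct and matches the paper's (implicit) argument: the paper states this corollary without proof as an immediate consequence of \coref{galgrup}, and your base-fixing verification that $(r,\sigma)\mapsto (R,S)$ transports the law \equref{grupstrc} to $(R'+RS',\, SS')$ is exactly the routine translation being taken for granted. Nothing is missing.
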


Thus, in the light of \coref{galgrupdim}, in order to compute the
Galois groups for extensions of finite dimensional algebras, the
groups ${\rm GL} (m, \, k) \rtimes {\rm M}_{n\times m} (k)$ will
play the role of the symmetric group $S_n$ from the classical
theory. A more detailed investigation of these concepts will be
performed somewhere else, here we only indicate a few examples and
in \coref{galcodim1} we compute the Galois group for any algebra
extension $A \subseteq B$ such that $A$ has codimension $1$ in
$B$.

\begin{examples}\exlabel{anagal}
1. Let $A := k < x \, | \, x^2 = 0 > $ and $B$ be the
$3$-dimensional non-commutative algebra having $\{1, \, x, \, y\}$
as a basis and the multiplication defined by $x^2 = 0$, $y^2 = y$,
$xy = x$ and $yx = 0$. Then ${\rm Gal} \, (B/A) \cong (k, +) $ and
$A \subseteq B$ is a Galois extension of algebras.

2. Consider  $B$ to be the non-commutative algebra having $\{1, \,
x, \, y\}$ as a basis and the multiplication defined by $x^2 = x$,
$y^2 = 0$, $xy = x$ and $yx = 0$ . Then  ${\rm Gal} \, (B/k) \cong
(k, +) \ltimes (k^{*}, \cdot) $, where we denoted by $ (k, +)
\ltimes (k^{*}, \cdot) $ the semidirect product of groups
associated to the group homomorphism $\psi: (k^{*}, \cdot) \to
{\rm Aut}( (k, +) )$ given by $\psi(h)(t) = ht$ for all $h \in
k^{*}$, $t \in k$. Furthermore, the extension of algebras $k
\subset B$ is Galois.

3. Let $A \subseteq B$ be the extension of algebras where $B$ is
the $4$-dimensional algebra having $\{e_1, \, e_2, \, e_3, e_4 \}$
as a basis and the multiplication given by $e_1 e_1 = e_1$, $e_1
e_3 = e_3 e_1 = e_3$, $e_2 e_2 = e_2$, $e_2 e_4 = e_4$ and $e_4
e_1 = e_4$ (undefined operations are zero) and $A$ is the
subalgebra with $\{e_1, \, e_2\}$ as a basis. The unit is $1_B =
e_1 + e_2$. Then, we can easily find that there exists an
isomorphism of groups ${\rm Gal} \, (B/A) \cong k^* \times k^* $
given such that $\sigma \in {\rm Gal} \, (B/A)$ associated to $(a,
\, b) \in k^* \times k^* $ is given by $\sigma (e_1) = e_1$,
$\sigma (e_2) = e_2$, $\sigma (e_3) = a e_3$ and $\sigma (e_4) = b
e_4$. Moreover, the extension $A \subseteq B$ in Galois, since $A
= B^{{\rm Gal} (B/A)}$.

Moreover, if we consider the same algebra $A$ as above but this
time viewed as a subalgebra of the $4$-dimensional algebra $C$,
having the basis $\{e_1, \, e_2, \, e_3, e_4 \}$ and
multiplication $e_1 e_1 = e_1$, $e_2 e_2 = e_2$, $e_2 e_3 = e_3$,
$e_2 e_4 = e_4$, $e_3 e_1 = e_3$ and $e_4 e_1 = e_4$ then we will
obtain that there exists an isomorphism of groups ${\rm Gal} \,
(C/A) \cong {\rm GL} (2, \, k) $ given such that $\sigma \in {\rm
Gal} \, (B/A)$ associated to the $2\times 2$ invertible matrix
$(a_{ij})$ is given by $\sigma (e_3) = a_{11} e_3 + a_{21} e_4$
and $\sigma (e_4) = a_{12} e_3 + a_{22} e_4$. The extension $A
\subseteq C$ is also Galois, as shown by an elementary
computation.
\end{examples}

\subsection*{Relative split extensions and cocycle semidirect products of algebras}
We will prove that several special cases of the unified product
are responsible for the description of algebra extensions $A
\subset E$ which split as morphisms of left/right $A$-modules,
$A$-bimodule or as algebra maps. If $A \subset E$ is an inclusion
of algebras, then $E$ will be viewed as a left/right $A$-module
via the restriction of scalars: $a \cdot x \cdot a' := a x a'$,
for all $a$, $a'\in A$ and $x\in E$. If $(V, \rightharpoonup,
\triangleleft) \in {}_A\Mm_A$ is an $A$-bimodule, then $V \times
V$ is viewed as an $A$-bimodule in the canonical way, i.e. the
left (resp. right) action of $A$ on $V\times V$ is implemented by
$\rightharpoonup$ (resp. $\triangleleft$). A bilinear map $f: V
\times V \to A$ is called $A$-balanced if $f (x, \, a
\rightharpoonup y) = f (x \triangleleft a, \, y)$, for all $a\in
A$, $x$, $y\in V$. Of course, $A$-bimodules and $A$-balanced maps
$f : V \times V \to A$ are in bijection to the set of all
$A$-bimodule maps $\tilde{f} : V \ot_A V \to A$, where $\ot_A$ is
the tensor product over $A$.

First, we shall describe extensions of algebras $A \subset E$ that
split in ${}_A\Mm$ (resp. $\Mm_A$), i.e. there exists a left
(resp. right) $A$-module map $p : E \to A$ such that $p (a) = a$,
for all $a\in A$.

\begin{corollary} \colabel{splitleft}
An extension of algebras $A \subset E$ has a retraction that is a
left (resp. right) $A$-module map if and only if there exists an
isomorphism of algebras $E \cong A \ltimes V$, where $A \ltimes V$
is the unified product associated to an algebra extending
structure $\Omega(A, V) \in {\mathcal A} {\mathcal E} (A, V) $
having $ \leftharpoonup : A \times V \to A$ (resp. $\triangleright
: V \times A \to A$) the trivial map.
\end{corollary}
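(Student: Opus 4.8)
The plan is to read off the triviality of $\leftharpoonup$ (resp. $\triangleright$) directly from the explicit formulas attached to a linear retraction in the proof of \thref{classif}. Recall that, starting from any linear retraction $p: E \to A$ of the inclusion $A \hookrightarrow E$, that construction produces the complement $V := \Ker(p)$ and an algebra extending structure whose two relevant actions are $a \leftharpoonup x = p(a \ast x)$ and $x \triangleright a = p(x \ast a)$, for all $a \in A$ and $x \in V$, together with an isomorphism $A \ltimes V \cong E$ that stabilizes $A$. The decisive feature is that $p$ vanishes on $V$. I would treat the left-module case in full; the right-module case is entirely symmetric.

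For the forward implication, suppose $p: E \to A$ is a retraction of $A \hookrightarrow E$ that is a morphism of left $A$-modules, where $E$ is a left $A$-module by restriction of scalars. Running the construction in the proof of \thref{classif} with this particular $p$ gives $E \cong A \ltimes V$, and for the associated action I compute $a \leftharpoonup x = p(a \ast x) = a\, p(x) = 0$, using left $A$-linearity of $p$ together with $p(x) = 0$. Hence $\leftharpoonup$ is the trivial map, as required.

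For the converse, assume $E \cong A \ltimes V$ with $\leftharpoonup$ trivial, and take the isomorphism $\Phi: A \ltimes V \to E$ to stabilize $A$, so that $\Phi(a, 0) = a$ for all $a \in A$. By \equref{001} the product now reads $(a, 0) \bullet (b, y) = (ab, \, a \rightharpoonup y)$, so the canonical projection $p_A: A \ltimes V \to A$, $p_A(a, x) = a$, is a retraction of $i_A$ and satisfies $p_A\bigl((a, 0) \bullet (b, y)\bigr) = ab = a\, p_A(b, y)$ for all $a$, $b \in A$ and $y \in V$; as $(b, y)$ is arbitrary, $p_A$ is left $A$-linear. I then transport it along $\Phi$: the map $p := p_A \circ \Phi^{-1}: E \to A$ is a retraction of $A \hookrightarrow E$ since $\Phi$ stabilizes $A$, and it is left $A$-linear because $\Phi^{-1}$ is an algebra map fixing $A$ pointwise, giving $p(a \ast b) = p_A\bigl((a, 0) \bullet \Phi^{-1}(b)\bigr) = a\, p(b)$ for all $a \in A$, $b \in E$.

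The right-module case follows the identical pattern with $\leftharpoonup$ replaced by $\triangleright$: the identity $x \triangleright a = p(x \ast a)$ yields triviality of $\triangleright$ in the forward direction, and in the converse the formula $(a, x) \bullet (b, 0) = (ab, \, x \triangleleft b)$ (valid once $\triangleright$ is trivial) shows that $p_A$, and hence $p$, is right $A$-linear. I do not anticipate a genuine obstacle in this argument; the only step deserving care is the verification that the left (resp. right) $A$-module structures on $E$ and on $A \ltimes V$ match under $\Phi$, which holds precisely because $\Phi$ is an algebra isomorphism stabilizing $A$.
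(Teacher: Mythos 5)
Your proposal is correct and follows essentially the same route as the paper: the forward direction reads off $a \leftharpoonup x = p(a \ast x) = a\,p(x) = 0$ from the construction in \thref{classif}, and the converse observes via \equref{001} that the canonical projection $p_A$ is a left $A$-linear retraction of $i_A$. The only difference is that you make explicit the transport of $p_A$ along the isomorphism stabilizing $A$, a step the paper leaves implicit; this is a welcome clarification but not a different argument.
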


\begin{proof}
Let $A \ltimes V$ be a unified product associated to $\Omega(A, V) \in
{\mathcal A} {\mathcal E} (A, V) $, for which $ \leftharpoonup$ is the trivial map. Then
the canonical projection $p_A : A \ltimes V \to A$ is a retraction of the inclusion
$i_A : A \to A\ltimes V$ that is also left $A$-linear since, using \equref{001}, we have:
$$
p_A (a \cdot (b, x) ) = p_A \bigl( (a, 0) \bullet (b, x) \bigl )
= p_A (ab, \, a \rightharpoonup x) = ab = a p_A ( (b, x))
$$
for all $a$, $b\in A$, $x \in V$. Conversely, let $A \subset E$ be
an inclusion of algebras which has a retraction $p: E \to A$ that
is also a left $A$-module map. It follows from the proof of
\thref{classif} that the action $\leftharpoonup = \leftharpoonup_p
$ associated to the retraction $p$ is the trivial map since $ a
\leftharpoonup x = p(a x) = a p (x) = 0 $, for all $a \in A$ and
$x \in V = {\rm Ker}(p)$. Thus, there exists an isomorphism of
algebras $E \cong A \ltimes V$, where $A \ltimes V$ is the unified
product associated to $\Omega(A, V) \in {\mathcal A} {\mathcal E}
(A, V) $, for which $ \leftharpoonup$ is the trivial map.

Analogously we can prove that the algebra extensions $A \subset E$
that split as right $A$-module maps are parameterized by the
unified products $A \ltimes V$ associated to algebra extending
structures $\Omega(A, V) \in {\mathcal A} {\mathcal E} (A, V)$ for
which the action $\triangleright : V \times A \to A$ is the
trivial map.
\end{proof}

\begin{examples} \exlabel{exgrupalecros}
1. The basic example of an algebra extension which splits as in
\coref{splitleft} is a group algebras extension. Let $H \leq G$ be
a subgroup of a group $G$. Then the group algebras extension $k[H]
\subset k[G]$ has a retraction which is a left $k[H]$-module map.
Hence, there exists an isomorphism of algebras $k[G] \cong k[H]
\ltimes V$, for some algebra extending structure $\Omega(k[H], V)
\in {\mathcal A} {\mathcal E} (k[H], V) $ having $ \leftharpoonup
: k[H] \times V \to A$ the trivial map.

2. The second class of split extensions in the sense of
\coref{splitleft} are the classical crossed products of algebras
\cite{passman}. Let $A$ be an algebra, $G$ be a group and $\alpha:
G \to {\rm Aut} (A)$, $f: G\times G \to U(A)$ be two maps. We
shall denote by $g \triangleright a := \alpha (g) (a)$, for all $g
\in G$ and $a\in A$. Let $\overline{G}$ be a copy as a set of the
group $G$ and $A^f_{\alpha}[G]$ be the free left $A$-module having
$\overline{G}$ as an $A$-basis with the multiplication given by:
\begin{equation} \eqlabel{multcross}
(a \, \overline{g}) (b \, \overline{h}) := a (g \triangleright b) f(g, h) \, \overline{gh}
\end{equation}
for all $a$, $b\in A$ and $g$, $h\in G$. $A^f_{\alpha}[G]$ is
called the \emph{crossed product} of $A$ and $G$ if it is an
associative algebra with the unit $1_A \overline{1_G}$. This is
equivalent (\cite{passman}) to the fact that $f(1_G, 1_G) =1_A$
and the following compatibilities hold for any $g$, $h$, $l\in G$
and $a \in A$:
\begin{eqnarray*}
g \triangleright ( h \triangleright a) = f(g, h) \bigl( (gh)\triangleright a \bigl) f(g, h)^{-1}, \qquad
f (g, h) f(gh, l) = \bigl( g \triangleright f(h, l) \bigl) f(g, hl);
\end{eqnarray*}
Any crossed product $A^f_{\alpha}[G]$ is an extension of $A$ via
the canonical map $i_A : A \to A^f_{\alpha}[G]$, $i_A (a) := a
\overline{1_G}$. This extension splits in the category of left
$A$-modules: the left $A$-linear map that splits $i_A$ being the
augmentation map $\pi_A : A^f_{\alpha}[G] \to A$, $\pi_A (a
\overline{g}) := a$, for all $a\in A$ and $g\in G$. Thus, using
\coref{splitleft} we obtain that any crossed product
$A^f_{\alpha}[G]$ is isomorphic to a unified product $A \ltimes V$
associated to an algebra extending structure $\Omega(A, V) \in
{\mathcal A} {\mathcal E} (A, V)$ for which the action
$\leftharpoonup : A \times V \to A$  is the trivial map.

3. Let $A$ be an algebra and $(W, 1_W)$ a pointed vector space.
All algebra structures $\cdot$ on the vector space $A \ot W$ such
that $(a \ot 1_W) \cdot (b \ot w) = ab \ot w$, for all $a$, $b\in
A$ and $w\in W$ and having $1_A \ot 1_W$ as a unit are fully
described in \cite[Proposition 2.1]{TB}: they are parameterized by
the set of all pairs $(\sigma, R)$ consisting of two linear maps
$\sigma: W\ot W \to A\ot W$, $R: W\ot A \to A \ot W$, satisfying a
laborious set of axioms. Such an algebra structure, which is a
very general construction, is denoted by $A \otimes_{R, \sigma} W$
and is called the \emph{Brzezinski's product}; they generalize the
twisted tensor product algebras \cite{cap} and are classified, up
to an isomorphism of algebras that stabilizes $A$, in
\cite{panaite2}. Now, $i_A : A \to A \otimes_{R, \sigma} W $, $i_A
(a) = a \ot 1_W$ is an injective algebra map which has a
retraction that is a left $A$-module map. Indeed, let $B = \{e_i
\, | \, i \in I\}$ be a basis in $W$ such that $1_W \in B$ and
$\varepsilon: W \to k$, $\varepsilon (e_i) = 1$, for all $i\in I$.
Then $\pi_A : A \otimes_{R, \sigma} W \to A$, $\pi_A := {\rm Id}_A
\ot \varepsilon$ is a left $A$-linear map and a retraction of
$i_A$. Thus, $ A \otimes_{R, \sigma} W \cong A \ltimes V$ for an
algebra extending structure $\Omega(A, V) \in {\mathcal A}
{\mathcal E} (A, V)$ for which the action $\leftharpoonup : A
\times V \to A$  is the trivial map.

4. The Ore extensions are also a special case of the unified
product; in particular any Weyl algebra is a unified product.
Indeed, let $\sigma: A \to A$ be an automorphism of the algebra
$A$, $\delta :A \to A$ a $\sigma$-derivation and $A[X, \sigma,
\delta]$ the Ore extension associated to $(\sigma, \delta)$, that
is $A[X, \sigma, \delta]$ is the free left $A$-module having
$\{X^n \, | \, n \geq 0 \}$ as a basis and the multiplication
given by $X a = \sigma (a) X + \delta(a)$, for all $a\in A$. Then,
the canonical embedding $i_A: A \to A[X, \sigma, \delta]$, $i_A
(a) = a$, has a retraction $p_A : A[X, \sigma, \delta] \to A$ that
is a left $A$-module map given by $p_A \bigl( \sum_{i=0}^n \, a_i
X^i \bigl) := a_0$. Thus, there exists an isomorphism of algebras
$A[X, \sigma, \delta] \cong A \ltimes V$, for an algebra extending
structure $\Omega(A, V) \in {\mathcal A} {\mathcal E} (A, V)$
having $\leftharpoonup : A \times V \to A$ the trivial map.
\end{examples}

Using \coref{splitleft} we can describe the algebra extensions $A
\subset E$ that admit a retraction $p: E \to A$ which is an
$A$-bimodule map. In this case the axioms (A1)- (A12) which
describe the corresponding unified products simplify considerably.
Indeed, let $\Omega(A, V) = \bigl(\triangleleft, \,
\triangleright, \, \leftharpoonup, \, \rightharpoonup, \, f, \,
\cdot \bigl)$ be an extending datum such that $\leftharpoonup$ and
$\triangleright$ are both the trivial maps. Then $\Omega(A, V) =
\bigl(\triangleleft, \, \rightharpoonup, \, f, \, \cdot \bigl)$ is
an algebra extending structure of $A$ through $V$ if and only if
$(V, \rightharpoonup, \triangleleft) \in {}_A\Mm_A$ is an
$A$-bimodule, $f: V\times V \to A$ is an $A$-balanced $A$-bimodule
map and the following compatibilities hold for any $a$, $b \in A$,
$x$, $y$, $z \in V$:
\begin{eqnarray*}
&& x \cdot ( y \cdot z) - (x \cdot y) \cdot z = f (x, \,
y) \rightharpoonup z \, - \,  x \triangleleft f (y, \, z) \\
&& a \rightharpoonup (x \cdot y) = (a \rightharpoonup x)
\cdot y, \qquad (x\cdot y) \triangleleft a = x \cdot (y \triangleleft a) \\
&& x \cdot (a \rightharpoonup y) = (x \triangleleft a) \cdot y, \qquad \,\,
f(x, \, y\cdot z) = f(x\cdot y, \, z)
\end{eqnarray*}
A system $\Omega(A, V) = \bigl(\triangleleft, \, \rightharpoonup,
\, f, \, \cdot \bigl)$ satisfying these compatibilities will be
called a \emph{cocycle semidirect system} of algebras. The unified
product associated to a cocycle semidirect system $\Omega(A, V) =
\bigl(\triangleleft, \, \rightharpoonup, \, f, \, \cdot \bigl)$
will be denoted by $A \#^f V$ and will be called the \emph{cocycle
semidirect product of algebras}. Explicitly, $A \#^f V = A \times
V$ (as vector spaces) with the multiplication given by:
\begin{equation}\eqlabel{cocsemidi}
(a, \, x) \bullet (b, \, y) := \bigl( ab + f(x, y), \,\, a \rightharpoonup y  +
x\triangleleft b + x \cdot y \bigl)
\end{equation}
for all $a$, $b \in A$ and $x$, $y \in V$. \coref{splitleft}
provides the following result:

\begin{corollary} \colabel{splitbimo}
An extension of algebras $A \subset E$ has a retraction that is an
$A$-bimodule map if and only if there exists an isomorphism of
algebras $E \cong A \#^f V$, where $A \#^f V$ is a cocycle
semidirect product of algebras.
\end{corollary}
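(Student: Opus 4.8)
The plan is to deduce this corollary from \coref{splitleft} by using the fact that an $A$-bimodule retraction is precisely a map that is simultaneously a left and a right $A$-module retraction, together with the observation that the cocycle semidirect product $A \#^f V$ is exactly the unified product in which both $\leftharpoonup$ and $\triangleright$ are the trivial maps.

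For the forward implication, suppose $p: E \to A$ is a retraction of the inclusion that is an $A$-bimodule map. Following the proof of \thref{classif}, I would build the algebra extending structure $\Omega(A,V) = \bigl(\triangleleft_p, \, \triangleright_p, \, \leftharpoonup_p, \, \rightharpoonup_p, \, f_p, \, \cdot_p\bigr)$ associated to this particular $p$, with $V := \Ker(p)$. The key computation, which is the heart of the argument, is that the two connecting actions collapse simultaneously: for $a \in A$ and $x \in V = \Ker(p)$, left $A$-linearity of $p$ gives $a \leftharpoonup_p x = p(a \ast x) = a\, p(x) = 0$, while right $A$-linearity gives $x \triangleright_p a = p(x \ast a) = p(x)\, a = 0$. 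Thus both $\leftharpoonup_p$ and $\triangleright_p$ are trivial, so $\Omega(A,V)$ reduces to a cocycle semidirect system and the algebra isomorphism $E \cong A \ltimes V$ provided by \thref{classif} is in fact an isomorphism $E \cong A \#^f V$.

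For the converse, I would take the canonical projection $p_A : A \#^f V \to A$, $p_A(a,x) = a$, which is visibly a retraction of $i_A$. Using the explicit multiplication \equref{cocsemidi}, I would verify $A$-bilinearity directly: the identity $p_A\bigl((a,0) \bullet (b,y)\bigr) = ab = a\, p_A(b,y)$ exhibits left $A$-linearity, and the symmetric computation $p_A\bigl((b,y) \bullet (a,0)\bigr) = ba = p_A(b,y)\, a$ exhibits right $A$-linearity, so $p_A$ is an $A$-bimodule retraction.

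I do not expect a genuine obstacle here: the corollary is a clean consequence of running the left- and right-module versions of \coref{splitleft} at once. The only point requiring care is to confirm that one and the same retraction $p$ forces both $\leftharpoonup_p$ and $\triangleright_p$ to vanish, but this is immediate since the bimodule hypothesis on $p$ supplies left and right linearity together. In particular, the compatibility axioms defining a cocycle semidirect system need not be re-derived, as they have already been obtained by specializing $(A1)$–$(A12)$ to the case where $\leftharpoonup$ and $\triangleright$ are trivial.
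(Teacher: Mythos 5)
Your proposal is correct and matches the paper's approach: the paper states this corollary as an immediate consequence of \coref{splitleft} (applied in its left- and right-module versions simultaneously), after having already specialized the axioms (A1)--(A12) to the case where $\leftharpoonup$ and $\triangleright$ are both trivial to define the cocycle semidirect product. Your explicit verifications that $a \leftharpoonup_p x = a\,p(x) = 0$ and $x \triangleright_p a = p(x)\,a = 0$, and that the canonical projection $p_A$ is an $A$-bimodule map, are exactly the details the paper leaves implicit.
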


\begin{example} \exlabel{twisted}
Examples of algebras that split in the sense of \coref{splitbimo}
are the \emph{twisted products} of algebras. A twisted product is
a crossed product $A^f_{\alpha}[G]$ as defined in
\exref{exgrupalecros} for which the action $\alpha$ is the trivial
action, that is $g \triangleright a = a$, for all $g\in G$ and
$a\in A$. In this case the augmentation map $\pi_A$ is also a
right $A$-module map and thus any twisted product $A^f[G]$ is
isomorphic to a cocycle semidirect product of algebras.
\end{example}

A cocycle semidirect system of algebras $\Omega(A, V) =
\bigl(\triangleleft, \, \rightharpoonup, \, f, \, \cdot \bigl)$
for which $f$ is the trivial map is called a \emph{semidirect
system} of algebras. Explicitly, $\Omega(A, V) =
\bigl(\triangleleft, \, \rightharpoonup, \, \cdot \bigl)$ is a
semidirect system of algebras if and only if $(V, \rightharpoonup,
\triangleleft) \in {}_A\Mm_A$ is an $A$-bimodule, $(V, \cdot)$ is
an associative (not-necessarily unital) algebra and
\begin{eqnarray*}
a \rightharpoonup (x \cdot y) = (a \rightharpoonup x)
\cdot y, \quad (x\cdot y) \triangleleft a = x \cdot (y \triangleleft a), \quad
x \cdot (a \rightharpoonup y) = (x \triangleleft a) \cdot y
\end{eqnarray*}
for all $a\in A$, $x$, $y\in V$. The cocycle semidirect product of
algebras associated to a semidirect system $\Omega(A, V) =
\bigl(\triangleleft, \, \rightharpoonup, \, \cdot \bigl)$ is
called the \emph{semidirect product} of algebras and will be
denoted by $A \# V$. This is a classical construction: it appears
in an equivalent form in \cite[Lemma a, pg. 212]{pierce}. We call
it the semidirect product by analogy with the group and Lie
algebra case where the semidirect product describes the split
extensions. More precisely, we have:

\begin{corollary} \colabel{splialg}
An extension of algebras $A \subset E$ has a retraction that is an algebra map if and only if
there exists an isomorphism of algebras $E \cong A \# V$, where $A \# V$ is
a semidirect product of algebras.
\end{corollary}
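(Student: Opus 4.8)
The plan is to treat \coref{splialg} as the algebra-map analogue of \coref{splitleft} and \coref{splitbimo}, specializing the machinery of \thref{classif} one step further. The key observation is that a semidirect product $A \# V$ is exactly the unified product in which \emph{all three} of the maps $\leftharpoonup$, $\triangleright$ and $f$ are trivial; by \equref{cocsemidi} with $f$ trivial its multiplication reads $(a, x) \bullet (b, y) = (ab, \, a \rightharpoonup y + x \triangleleft b + x \cdot y)$. So the whole statement reduces to detecting, at the level of the canonical extending datum attached to a retraction, precisely the vanishing of $\leftharpoonup$, $\triangleright$ and $f$.

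For the ``if'' direction I would start from an isomorphism $E \cong A \# V$ and transport the problem to $A \# V$. There the canonical projection $p_A : A \# V \to A$, $p_A(a, x) = a$, is a retraction of $i_A$, and reading off the first component of the multiplication above gives $p_A\bigl((a, x) \bullet (b, y)\bigr) = ab = p_A(a, x)\, p_A(b, y)$, so $p_A$ is multiplicative; it is automatically unital since $p_A(1_A, 0) = 1_A$. Composing $p_A$ with the inverse isomorphism produces an algebra-map retraction of $A \hookrightarrow E$.

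For the ``only if'' direction I would invoke the construction in the proof of \thref{classif} applied to the given algebra-map retraction $p : E \to A$ itself, setting $V := \Ker(p)$. The point is that the induced actions and cocycle inherit the multiplicativity of $p$: for $a \in A$ and $x$, $y \in V = \Ker(p)$ one computes $a \leftharpoonup x = p(a \ast x) = p(a) p(x) = 0$, and likewise $x \triangleright a = p(x \ast a) = 0$ and $f(x, y) = p(x \ast y) = 0$, using $p(x) = p(y) = 0$. Thus the canonical extending structure is a semidirect system and \thref{classif} yields an algebra isomorphism $E \cong A \ltimes V = A \# V$. I do not expect any genuine obstacle here: \thref{classif} already guarantees that the extending datum satisfies (A1)--(A12), so none of the compatibility axioms of a semidirect system need be re-verified by hand, and the only small point requiring care is that ``algebra map'' includes preservation of the unit, which holds trivially because $p$ restricts to the identity on $A$ and $1_E = 1_A$.
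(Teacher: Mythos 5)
Your proposal is correct and follows essentially the same route as the paper: the forward direction observes that the canonical projection $p_A : A \# V \to A$ is an algebra-map retraction, and the converse applies the construction from \thref{classif} to the algebra-map retraction $p$ itself, noting that multiplicativity of $p$ forces $\leftharpoonup_p$, $\triangleright_p$ and $f_p$ to vanish on $V = \Ker(p)$. The paper's proof is exactly this argument, stated slightly more tersely.
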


\begin{proof}
Indeed, the canonical projection $p_A : A \# V \to A$, $p_{A}(a,
\, x) = a$ is a retraction of the inclusion $i_A : A \to A\# V$
and an algebra map. Conversely, from \thref{classif} it follows
that if $p: E \to A$ is an algebra map then $\triangleright_p$,
$\leftharpoonup_p$ and $f_p$ constructed in the proof are all
trivial maps, i.e. the corresponding unified product $A\ltimes V$
is a semidirect product $A\#V$.
\end{proof}

\subsection*{Matched pairs, bicrossed products and the factorization problem for
algebras}

The concept of a matched pair of groups was introduced in
\cite{Takeuchi} while the one for Lie algebras in \cite[Theorem
4.1]{majid} and independently in \cite[Theorem 3.9]{LW}. To any
matched pair of groups (resp. Lie algebras) a new group (resp. Lie
algebra) called the \emph{bicrossed product} is associated and it
is responsible for the so-called factorization problem. Since
then, the corresponding concepts were introduced for several types
of categories such as groupoids, Hopf algebras, local compact
quantum groups, etc. In what follows we will introduce the
corresponding notion for associative algebras. First, we set the
terminology. If $(V, \cdot)$ is an associative (not-necessarily
unital) algebra, then the concept of left/right $V$-module or
$V$-bimodule is defined as in the case of unital algebras except
of course for the unitary condition.

\begin{definition} \delabel{mpalgebras}
A \emph{matched pair} of algebras is a system $(A, \, V, \,
\triangleleft, \, \triangleright, \, \leftharpoonup, \,
\rightharpoonup \bigl)$ consisting of an unital algebra $A$, a
(not-necessarily unital) associative algebra $V = (V, \cdot) $ and
four bilinear maps
$$
\triangleleft : V \times A \to V, \quad \triangleright : V \times
A \to A, \quad \leftharpoonup \, : A \times V \to A, \quad
\rightharpoonup \, : A \times V \to V
$$
such that $(V, \rightharpoonup, \triangleleft) \in {}_A\Mm_A$ is
an $A$-bimodule, $(A, \triangleright, \leftharpoonup) \in
{}_V\Mm_V$ is a $V$-bimodule and the following compatibilities
hold for any $a$, $b \in A$, $x$, $y \in V$:
\begin{enumerate}
\item[(MP1)] $a \rightharpoonup (x \cdot y) = (a \rightharpoonup x)
\cdot y + (a \leftharpoonup x) \rightharpoonup y$;

\item[(MP2)] $ (ab) \leftharpoonup x = a (b \leftharpoonup x) + a
\leftharpoonup (b \rightharpoonup x)$;

\item[(MP3)] $ x \triangleright (ab) = (x \triangleright a) b + (x
\triangleleft a) \triangleright b$;

\item[(MP4)] $(x\cdot y) \triangleleft a = x \triangleleft (y
\triangleright  a) + x \cdot (y \triangleleft a)$;

\item[(MP5)] $a (x \triangleright b) + a \leftharpoonup (x
\triangleleft b) = (a \leftharpoonup x) b + (a\rightharpoonup x)
\triangleright b$;

\item[(MP6)] $x \triangleleft (a \leftharpoonup y) + x \cdot (a
\rightharpoonup y) = (x \triangleright a) \rightharpoonup y + (x
\triangleleft a) \cdot y $;
\end{enumerate}
\end{definition}

We make a few comments on these compatibilities. If we apply (MP2)
and (MP3) for $a = b = 1_A$, we obtain that $1_A \leftharpoonup x
= 0$ and $x \triangleright 1_A = 0$, for all $x\in V$. The two
compatibility conditions together with the unitary condition
derived from the fact that $(V, \rightharpoonup, \triangleleft)$
is an $A$-bimodule show that the system $(\triangleleft, \,
\triangleright, \, \leftharpoonup, \, \rightharpoonup \bigl)$ is
normalized in the sense of \deref{exdatum}. Similar to the Lie
algebra \cite{am-2013} case the above axioms can be derived from
the ones of an algebra extending structure for which the cocycle
$f$ is the trivial map. More precisely, let $\Omega(A, V) =
\bigl(\triangleleft, \, \triangleright, \, \leftharpoonup, \,
\rightharpoonup, \, f, \, \cdot \bigl)$ be an extending datum such
that $f$ is the trivial map. Then $\Omega(A, V) =
\bigl(\triangleleft, \, \triangleright, \, \leftharpoonup, \,
\rightharpoonup, \, \cdot \bigl)$ is an algebra extending
structure of $A$ through $V$ if and only if $(A, \, (V, \cdot), \,
\triangleleft, \, \triangleright, \, \leftharpoonup, \,
\rightharpoonup \bigl)$ is a matched pair of algebras. In this
case, the associated unified product $A \ltimes V$ will be
denoted, as in the case of groups, Lie algebras, Hopf algebras,
etc. by $A \bowtie V$ and will be called the \emph{bicrossed
product} associated to the matched pair $(A, \, V, \,
\triangleleft, \, \triangleright, \, \leftharpoonup, \,
\rightharpoonup \bigl)$. Thus, $A \bowtie V = A \times V$, as a
vector space, with an unital associative algebra structure given
by
\begin{equation}\eqlabel{prodbicross}
(a, \, x) \bullet (b, \, y) := \bigl( ab + a \leftharpoonup y + x
\triangleright b, \,\, a \rightharpoonup y  +
x\triangleleft b + x \cdot y \bigl)
\end{equation}
for all $a$, $b \in A$ and $x$, $y \in V$. The bicrossed product
of two algebras is the construction responsible for the so-called
\emph{factorization problem}, which in the case of associative
algebras comes down to:

\emph{Let $A$ be a unital algebra and $V$ a (not-necessarily
unital) associative algebra. Describe and classify all unital
algebras $E$ that factorize through $A$ and $V$, i.e. $E$ contains
$A$ and $V$ as subalgebras such that $E = A + V$ and $A \cap V =
\{0\}$.}

Indeed, as a special case of \thref{classif} we have:

\begin{corollary}\colabel{bicrfactor}
Let $A$ be an unital algebra and $V$ a (not-necessarily unital)
associative algebra. Then, an algebra $E$ factorizes through $A$
and $V$ if and only if there exists a matched pair of algebras
$(A, \, V, \, \triangleleft, \, \triangleright, \, \leftharpoonup,
\, \rightharpoonup \bigl)$ such that $ E \cong A \bowtie V$.
\end{corollary}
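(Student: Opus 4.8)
The plan is to derive both implications directly from \thref{classif}, together with the observation recorded just before \deref{mpalgebras}: an extending datum whose cocycle $f$ is trivial is an algebra extending structure precisely when $(A, \, V, \, \triangleleft, \, \triangleright, \, \leftharpoonup, \, \rightharpoonup)$ is a matched pair, and in that case the unified product $A \ltimes V$ coincides with the bicrossed product $A \bowtie V$ of \equref{prodbicross}. So the entire content of the corollary is the bookkeeping that reduces ``bicrossed product'' to ``unified product with $f = 0$''.

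For the implication $(\Leftarrow)$, suppose $E \cong A \bowtie V$ for some matched pair. The canonical map $i_A \colon A \to A \bowtie V$, $i_A(a) = (a, 0)$, is an injective algebra map, so $A \times \{0\}$ is a subalgebra. The map $i_V \colon V \to A \bowtie V$, $i_V(x) = (0, x)$, is multiplicative: by \equref{001a} and the triviality of $f$ we have $(0, x) \bullet (0, y) = (f(x, y), \, x \cdot y) = (0, \, x \cdot y)$, so $\{0\} \times V$ is a (possibly non-unital) subalgebra isomorphic to $V$. Since $(a, x) = (a, 0) + (0, x)$ we get $A \bowtie V = (A \times \{0\}) + (\{0\} \times V)$ with $(A \times \{0\}) \cap (\{0\} \times V) = \{0\}$. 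Transporting these two subalgebras along the isomorphism $A \bowtie V \cong E$ exhibits $E$ as a factorization through $A$ and $V$.

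For the converse $(\Rightarrow)$, assume $E$ factorizes through its subalgebras $A$ and $V$, so that $E = A \oplus V$ as vector spaces and $V$ is in particular a complement of $A$ in $E$. Here is the one genuine choice in the argument: rather than an arbitrary linear retraction, I would take $p \colon E \to A$ to be the projection onto $A$ along the prescribed complement $V$, so that ${\rm Ker}(p) = V$. Feeding this $p$ into the construction in the proof of \thref{classif} produces an algebra extending structure $\Omega(A, V) = \bigl(\triangleleft, \, \triangleright, \, \leftharpoonup, \, \rightharpoonup, \, f, \, \cdot \bigl)$ with $E \cong A \ltimes V$. The point is that the cocycle now vanishes: for $x$, $y \in V$ we have $x \ast y \in V = {\rm Ker}(p)$ since $V$ is a subalgebra, whence $f(x, y) = p(x \ast y) = 0$; for the same reason $x \cdot y = x \ast y - p(x \ast y) = x \ast y$ recovers the original multiplication of $V$.

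With $f$ trivial, the cited observation identifies $\Omega(A, V)$ with a matched pair of algebras and the unified product $A \ltimes V$ with the bicrossed product $A \bowtie V$, giving $E \cong A \bowtie V$ and finishing the proof. The only real subtlety --- the main obstacle to a naive invocation of \thref{classif} --- is precisely this insistence on the retraction whose kernel is the given subalgebra $V$: an arbitrary retraction would yield a nonzero cocycle and only a unified product, so the vanishing of $f$ must be engineered through the choice of $p$.
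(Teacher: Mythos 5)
Your proposal is correct and follows essentially the same route as the paper: both directions hinge on choosing the retraction $p$ to be the projection onto $A$ along the given subalgebra $V$, so that $f(x,y)=p(x\ast y)=0$ because $V={\rm Ker}(p)$ is closed under multiplication, after which the observation preceding \deref{mpalgebras} identifies the resulting extending structure with a matched pair. The only difference is that you spell out the easy direction (that $A\times\{0\}$ and $\{0\}\times V$ are complementary subalgebras of $A\bowtie V$) in more detail than the paper does.
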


\begin{proof}
To start with, it is easy to see that any bicrossed product $A
\bowtie V$ factorizes through $A \cong A \times \{0\}$ and $V
\cong \{0\}\times V$, which are subalgebras in $A \bowtie V$.
Conversely, assume that $E$ factorizes through $A$ and $V$. Let
$p: E \to A$ be the $k$-linear projection of $E$ on $A$, i.e. $p
(a + x) := a$, for all $a\in A$ and $x \in V$. Now, we apply
\thref{classif} for $V = {\rm Ker}(p)$. Since $V$ is a subalgebra
of $E$, the map $f = f_p$ constructed in the proof of
\thref{classif} is the trivial map as $x \cdot y \in V = {\rm Ker}
(p)$. Thus, the algebra extending structure $\Omega(A, V)$
constructed in the proof of \thref{classif} is precisely a matched
pair of algebras and the unified product $A \ltimes V = A \bowtie
V$ is the bicrossed product of the matched pair  $(A, \, V, \,
\triangleleft, \, \triangleright, \, \leftharpoonup, \,
\rightharpoonup \bigl)$.
\end{proof}

\coref{bicrfactor} shows that the factorization problem for
algebras can be restated in a purely computational manner as
follows: Let $A$ and $V$ be two given algebras. Describe the set
of all matched pairs of algebras $(A, \, V, \, \triangleleft, \,
\triangleright, \, \leftharpoonup, \, \rightharpoonup \bigl)$ and
classify up to an isomorphism all bicrossed products $A \bowtie
V$. A detailed study of this question will be given elsewhere.

\subsection*{The commutative case}
The case of commutative algebras needs to be treated distinctly.
On the one hand, we obtain from \equref{001a} and \equref{002b}
that a unified product $A \ltimes V$ is a commutative algebra if
and only if $A$ is commutative, $f: V \times V \to A$, $\cdot : V
\times V \to V$ are symmetric bilinear maps, $a \leftharpoonup x =
x \triangleright a$ and $a\rightharpoonup x  = x \triangleleft a$,
for all $a\in A$ and $x \in V$. On the other hand, if we look at
the construction of the algebra extending structure from
\thref{classif} in the case when $A\subseteq E$ is an extension of
commutative algebras, we also obtain that $a \leftharpoonup_p x =
x \triangleright_p a$ and $a\rightharpoonup_p x  = x
\triangleleft_p a$ for all $a\in A$ and $x \in V$. Thus, in the
commutative case \deref{exdatum} takes the following form:

\begin{definition}\delabel{comexdatum}
Let $A$ be a commutative algebra and $V$ a vector space. A
\textit{commutative extending datum of $A$ through $V$} is a
system $\Omega(A, V) = \bigl(\triangleleft, \, \triangleright, \,
f, \, \cdot \bigl)$ consisting of four bilinear maps
$$
\triangleleft : V \times A \to V, \quad \triangleright : V \times
A \to A, \quad f: V\times V \to A, \quad \cdot \, : V\times V \to V
$$
such that $f$ and $\cdot$ are symmetric. Let $\Omega(A, V) =
\bigl(\triangleleft, \, \triangleright, \, f, \, \cdot \bigl)$ be
a commutative extending datum. Then the multiplication on $A \,
\ltimes V = A \, \times V $ given by \equref{produnif} takes the
form:
\begin{equation}\eqlabel{produnifcom}
(a, \, x) \bullet (b, \, y) := \bigl( ab + y \triangleright a + x
\triangleright b + f(x, y), \,\,  y \triangleleft a  +
x\triangleleft b + x \cdot y \bigl)
\end{equation}
for all $a$, $b \in A$ and $x$, $y \in V$. $A\ltimes V$
is a commutative unified product if it is a commutative
associative algebra with the multiplication given by
\equref{produnifcom} and the unit $(1_A, 0_V)$. In this case the
extending datum $\Omega(A, V) = \bigl(\triangleleft, \,
\triangleright, \, f, \, \cdot \bigl)$ is called a \textit{commutative
algebra extending structure} of $A$ through $V$.
\end{definition}

In other words, a commutative algebra extending structure of a
commutative algebra $A$ through a vector space $V$ is a
commutative extending datum $\Omega(A, V) = \bigl(\triangleleft,
\, \triangleright, \, f, \, \cdot \bigl)$ satisfying the axioms
(A1)-(A12) of \thref{1} in which we replace $a \leftharpoonup x :=
x \triangleright a$ and $a\rightharpoonup x := x \triangleleft a$
for all $a\in A$, $x \in V$. That is, the following compatibility
conditions hold for any $a$, $b \in A$, $x$, $y$, $z \in V$:
\begin{enumerate}
\item[(CA1)] $(V, \triangleleft)$ is an $A$-module and $x \triangleright 1_A = 0$;

\item[(CA2)] $x \cdot ( y \cdot z) - (x \cdot y) \cdot z = z \triangleleft f (x, \,
y) \, - \,  x \triangleleft f (y, \, z)$;

\item[(CA3)] $(x \cdot y) \triangleleft a = x \triangleleft (y
\triangleright a) + x \cdot (y\triangleleft a)$;

\item[(CA4)] $ x \triangleright (ab) = a (x \triangleright b) + (x
\triangleleft b) \triangleright a$;

\item[(CA5)] $ (x \cdot y) \triangleright a = x \triangleright (y
\triangleright a) + f(x, \, y \triangleleft a) - f (x, \, y) a$;

\item[(CA6)] $f(x, \, y\cdot z) - f(x\cdot y, \, z) = z
\triangleright f (x, \, y)
 - x \triangleright f(y, \, z)$;
\end{enumerate}

The above axioms are derived from those of \thref{1} by taking
into account that $A$ is commutative, $f$ and $\cdot$ are
symmetric bilinear maps and $a \leftharpoonup x = x \triangleright
a$ and $a\rightharpoonup x  = x \triangleleft a$ for all $a\in A$
and $x \in V$. Indeed, as $a \leftharpoonup x = x \triangleright
a$ and $a\rightharpoonup x  = x \triangleleft a$, the normalizing
conditions come down to $x \rhd 1_{A} = 0$ while (A1) reduces to
$(V, \lhd)$ being an $A$-module. Moreover, (A4) follows from (A7)
by using $(x \cdot y) \rhd a = (y \cdot x) \rhd a$. In the same
manner we can derive (A8) out of (A3) by having in mind that $(x
\cdot y) \lhd a = (y \cdot x) \lhd a$. (A6) and (A9) can be
derived from (A5) by using the commutativity of $A$, more
precisely $x \rhd ab = x \rhd ba$. Finally, (A10) comes out of
(A4) by having in mind that $(x \cdot y) \rhd a = (y \cdot x) \rhd
a$ while (A11) follows from (A3) by using $(x \cdot y) \lhd a = (y
\cdot x) \lhd a$. Therefore, we are left with the independent set
of $6$ axioms listed above.

\section{Flag and supersolvable algebras. Examples}\selabel{exemple}
The challenge that remains after providing the theoretical answer
to the ES-problem in \thref{main1} is a purely computational one:
for a given algebra $A$ that is a subspace in a vector space $E$
with a given complement $V$, we have to compute the classifying
object ${\mathcal A} {\mathcal H}^{2}_{A} \, (V, \, A)$ and then
list all algebra structures on $E$ which extend the one of $A$.
For the sake of completeness, we can also compute the space
${\mathcal A} {\mathcal H}^{2} \, (V, \, A)$. In what follows we
provide a way of answering this problem for a large class of such
structures.

\begin{definition} \delabel{flagex}
Let $A$ be an algebra and $E$ a vector space
containing $A$ as a subspace. An algebra
structure on $E$ is called a \emph{flag extending structure} of
$A$ to $E$ if there exists a finite chain of
subalgebras of $E$
\begin{equation} \eqlabel{lant}
E_0 := A \subset E_1 \subset \cdots \subset E_m := E
\end{equation}
such that $E_i$ has codimension $1$ in $E_{i+1}$, for all $i = 0,
\cdots, m-1$. An algebra $E$ that is a flag extending structure of
$k$ will be called a \emph{supersolvable} algebra.
\end{definition}

The notion introduced in the above definition is just the
associative algebra counterpart of the well known concept of
supersolvable Lie algebras (\cite{barnes}). All flag extending
structures of $A$ to $E$ can be completely described by a
recursive reasoning where the key step is $m = 1$. This step
describes and classifies all unified products $A \ltimes V_1$, for
a $1$-dimensional vector space $V_1$. Then, by replacing the
initial algebra $A$ with such a unified product $A \ltimes V_1$,
which will be explicitly described in terms of $A$ only, we can
iterate the process: after $m$ steps, we obtain the description of
all flag extending structures of $A$ to $E$. A special case of
interest for the classification of finite dimensional algebras is
the case when $A = k$, i.e. to classify all $m$-dimensional
supersolvable algebras. In this context we recall that the
classification of solvable Lie algebras, over arbitrary fields,
was achieved only up to dimension four \cite{gra2}. First we need
to introduce the following concept which plays the key role in the
classification of flag extending structures:

\begin{definition} \delabel{tehnica}
Let $A$ be an algebra. A \emph{flag datum} of $A$ is a $6$-tuple
$(\Lambda, \, \lambda, \, D, \, d, \, a_0, \, u)$, where
$\Lambda$, $\lambda : A \to k$ are morphisms of algebras, $D$, $d
: A \to A$ are linear maps, $a_0 \in A$, $u\in k$ satisfying the
following compatibilities:
\begin{eqnarray}
&& \Lambda (a_0) = \lambda (a_0), \quad D(a_0) = d(a_0), \quad
\lambda \circ d = 0, \quad \Lambda \circ D = 0 \eqlabel{flag1} \\
&& d(ab) = a \, d(b) + d(a) \, \lambda (b),  \quad D(ab) = \Lambda
(a) \, D(b) + D(a) \, b \eqlabel{flag2} \\
&& d^2 (a) = u \, d(a) + a \, a_0  - \lambda (a) \, a_0, \quad D^2
(a) = u \, D(a) + a_0 \, a  - \Lambda (a) \, a_0 \eqlabel{flag3} \\
&& D \bigl( d(a) \bigl) - d \bigl( D(a) \bigl) = \bigl( \Lambda
(a) - \lambda (a) \bigl) \, a_0 \eqlabel{flag4} \\
&& \Lambda \bigl( d(a) \bigl) - \lambda \bigl( D(a) \bigl) =
\bigl( \Lambda (a) - \lambda (a) \bigl) \, u \eqlabel{flag45} \\
&& a \, D(b) + \Lambda (b) \, d(a) = d(a) \, b + \lambda (a) \,
D(b) \eqlabel{flag6}
\end{eqnarray}
for all $a$, $b \in A$. We denote by ${\mathcal F} \, (A)
\subseteq {\rm Alg} (A, k)^2 \, \times {\rm Hom}_k (A, A)^2 \,
\times A \times k $ the set of all flag datums of $A$.
\end{definition}

${\mathcal F} \, (A)$ can be the empty set: for instance if the
algebra $A$ has no characters, like in the case of the matrix
algebra $A = M_n(k)$, for $n \geq 2$. The compatibilities
\equref{flag2} show that $d$ and $D$ are twisted derivations of
the algebra $A$. Applying these compatibilities for $a = b = 1_A$
we obtain that $D(1_A) = d(1_A) = 0$, for any $(\Lambda, \,
\lambda, \, D, \, d, \, a_0, \, u) \in {\mathcal F} \, (A)$.

\begin{proposition}\prlabel{unifdim1}
Let $A$ be an algebra and $V$ a vector space of dimension $1$ with
basis $\{x\}$. Then there exists a bijection between the set
${\mathcal A} {\mathcal E} \, (A, V)$ of all algebra extending
structures of $A$ through $V$ and the set ${\mathcal F} \, (A)$ of
all flag datums of $A$.

Through the above bijection, the unified product corresponding to
$(\Lambda, \, \lambda, \, D, \, d, \, a_0, \, u) \in {\mathcal
F}(A)$ will be denoted by $A \ltimes_{(\Lambda, \, \lambda, \, D,
\, d, \, a_0, \, u)} \, x$ and has the multiplication given for
any $a$, $b \in A$ by:
\begin{eqnarray}
(a, 0) \bullet (b, 0) &=& (ab , \, 0), \qquad \,\,\,\,\,\,\,\,\,
(0, x) \bullet (0, x) = (a_0, \, u\, x) \eqlabel{001a}\\
(a, 0) \bullet (0, x) &=& ( d(a), \, \lambda(a) \, x), \,\,\,\,
(0, x) \bullet (a, 0) = (D(a), \, \Lambda(a) x) \eqlabel{002b}
\end{eqnarray}
i.e., $A \ltimes_{(\Lambda, \, \lambda, \, D, \, d, \, a_0, \, u)}
\, x$ is the algebra generated by the algebra $A$ and $x$ subject
to the relations:
\begin{equation}\eqlabel{relflag1}
x^2 = a_0 + u\, x, \quad ax = d(a) + \lambda (a) \, x, \quad xa =
D(a) + \Lambda (a) \, x
\end{equation}
for all $a \in A$.
\end{proposition}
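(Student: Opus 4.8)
The plan is to use the one-dimensionality of $V$ to compress an entire extending datum into the six pieces of a flag datum, and then to match the twelve conditions (A1)--(A12) of \thref{1} against the six compatibilities \equref{flag1}--\equref{flag6} by evaluating each axiom on the single basis vector.

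First I would set up the encoding. Since $V = kx$, bilinearity forces every structure map to be determined by its value on $x$, so I write
\[
x \triangleleft a = \Lambda(a)\,x,\quad x \triangleright a = D(a),\quad a \leftharpoonup x = d(a),
\]
\[
a \rightharpoonup x = \lambda(a)\,x,\quad f(x,x)=a_0,\quad x\cdot x = u\,x,
\]
which defines linear maps $\Lambda,\lambda\colon A\to k$, $D,d\colon A\to A$, an element $a_0\in A$ and a scalar $u\in k$; conversely any such $6$-tuple reconstructs a unique extending datum. This already gives a bijection between extending datums of $A$ through $V$ and arbitrary $6$-tuples, and the substance of the proposition is that it restricts to a bijection between algebra extending structures and flag datums.

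The heart of the argument is then a term-by-term translation, obtained by substituting the encoding into each axiom, using the reduced products \equref{001a}--\equref{002b}, and separately equating the $A$-component and the $x$-coefficient. I expect the outcome to be: (A1) says precisely that $\Lambda$ and $\lambda$ are characters (the module axioms for $\rightharpoonup$ and $\triangleleft$ give multiplicativity, the normalization \equref{normed} gives $\Lambda(1_A)=\lambda(1_A)=1$ and $D(1_A)=d(1_A)=0$, and the bimodule cross-compatibility is automatic); (A6) and (A7) yield the twisted derivation identities \equref{flag2}; (A2), (A3), (A4), (A9) yield the four identities of \equref{flag1}, namely $\Lambda(a_0)=\lambda(a_0)$, $D(a_0)=d(a_0)$, $\lambda\circ d=0$, $\Lambda\circ D=0$; (A5) and (A8) yield the two identities of \equref{flag3}; and (A10), (A11), (A12) yield \equref{flag6}, \equref{flag4} and \equref{flag45} respectively.

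I do not expect a genuine obstacle: each translation is a direct computation. The only real care needed is bookkeeping, namely confirming that all twelve axioms are accounted for with no leftover or contradictory condition, in particular that the two one-sided module axioms collapse exactly to multiplicativity of $\lambda$ and $\Lambda$ and that the mixed $A$-bimodule axiom holds identically once $V$ is one-dimensional. Having established the equivalence of the two axiom systems, the multiplication formulas \equref{001a}--\equref{002b} are read off directly from \equref{produnif}, and the presentation \equref{relflag1} follows by identifying $(a,0)$ with $a$ and $(0,x)$ with $x$, so that $x^2$, $ax$ and $xa$ are exactly the products $(0,x)\bullet(0,x)$, $(a,0)\bullet(0,x)$ and $(0,x)\bullet(a,0)$.
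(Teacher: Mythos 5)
Your proposal is correct and follows essentially the same route as the paper: encode the extending datum as a $6$-tuple using the one-dimensionality of $V$, then translate (A1)--(A12) axiom by axiom into the flag-datum compatibilities (the paper carries out only a few of these translations explicitly and leaves the rest to the reader). Your axiom-by-axiom attributions check out, so nothing further is needed.
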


\begin{proof}
We have to compute the set of all bilinear maps $\triangleleft : V
\times A \to V$, $\triangleright : V \times A \to A$,
$\leftharpoonup \, : A \times V \to A$, $ \rightharpoonup \, : A
\times V \to V$, $f: V\times V \to A$ and $\cdot \, : V\times V
\to V $ satisfying the compatibility conditions (A1)-(A12) of
\thref{1}. Since $V$ has dimension $1$ there exists a bijection
between the set of all extending datums of $A$ through $V$ and the
set of all $6$-tuples $(\Lambda, \, \lambda, \, D, \, d, \, a_0,
\, u)$ consisting of four linear maps $\Lambda$, $\lambda: A \to
k$, $D$, $d : A \to A$ and two elements $a_0\in A$ and $u\in k$.
The bijection is given such that the extending datum $\Omega(A, V)
= \bigl(\triangleleft, \, \triangleright, \, \leftharpoonup, \,
\rightharpoonup, \, f, \, \cdot \bigl)$ corresponding to
$(\Lambda, \, \lambda, \, D, \, d, \, a_0, \, u)$ is given by:
\begin{eqnarray*}
x \triangleleft a &:=& \Lambda (a) x, \quad x \triangleright a :=
D(a), \quad a \leftharpoonup x := d(a), \quad a\rightharpoonup x := \lambda (a) x \\
f (x, x) &:=& a_0, \qquad  x \cdot x := u \, x
\end{eqnarray*}
for all $a \in A$. Now, by a straightforward computation one can
see that the axioms (A1)-(A12) of \thref{1} are equivalent to the
fact that $\Lambda$, $\lambda: A \to k$ are algebra maps and the
compatibility conditions \equref{flag1}-\equref{flag6} hold. For
instance, the fact that $(V, \rightharpoonup, \triangleleft)$ is
an $A$-bimodule is equivalent to the fact that $\lambda$ and
$\Lambda : A \to k$ are algebra maps. The axiom (A2) holds if and
only if $\Lambda (a_0) = \lambda (a_0)$, while the axiom (A4) is
equivalent to $\lambda (d(a)) = 0$, for all $a\in A$. The
remaining details are left to the reader.
\end{proof}

\prref{unifdim1} provides an explicit description of all algebras
which contain $A$ as a subalgebra of codimension $1$: they are
isomorphic to an algebra defined by \equref{relflag1}, for some
flag datum $(\Lambda, \, \lambda, \, D, \, d, \, a_0, \, u)$ of
$A$. The existence of this type of algebras depends essentially on
the algebra $A$. Next we will classify this type of algebras by
providing the first explicit classification result of the
ES-problem:

\begin{theorem}\thlabel{clasdim1}
Let $A$ be an algebra of codimension $1$ in the vector space $E$.
Then:

$(1)$ ${\rm Extd} \, (E, A) \cong {\mathcal A} {\mathcal
H}^{2}_{A} \, (k, A) \cong {\mathcal F} \, (A) /\equiv $, where
$\equiv $ is the equivalence relation on the set ${\mathcal F} \,
(A)$ defined as follows: $(\Lambda, \, \lambda, \, D, \, d, \,
a_0, \, u)  \equiv (\Lambda', \, \lambda', \, D', \, d', \, a'_0,
\, u') $ if and only if $\Lambda = \Lambda'$, $\lambda = \lambda'$
and there exists a pair $(q, \alpha) \in k^* \times A$ such that:
\begin{eqnarray}
D(a) &=& q\, D'(a) + \alpha \, a - \Lambda (a) \, \alpha
\eqlabel{eci1} \\
d(a) &=& q \, d'(a) + a \, \alpha - \lambda(a)\, \alpha \eqlabel{eci2} \\
a_0 &=& q^2 \, a'_0 + \alpha^2 - u \, \alpha + q \, d' (\alpha) +
q
D' (\alpha) \eqlabel{eci3} \\
u &=& q\, u' + \lambda'(\alpha) + \Lambda'(\alpha) \eqlabel{eci4}
\end{eqnarray}
for all $a \in A$. The bijection between ${\mathcal F} \,
A)/\equiv$ and ${\rm Extd} \, (E, A)$ is given by:
$$
\overline{(\Lambda, \, \lambda, \, D, \, d, \, a_0, \, u)} \mapsto
A \ltimes_{(\Lambda, \, \lambda, \, D, \, d, \, a_0, \, u)} \, x
$$
where $\overline{(\Lambda, \, \lambda, \, D, \, d, \, a_0, \, u)}$
is the equivalence class of $(\Lambda, \, \lambda, \, D, \, d, \,
a_0, \, u)$ via the relation $\equiv$ and $A \ltimes_{(\Lambda, \,
\lambda, \, D, \, d, \, a_0, \, u)} \, x$ is the algebra defined
by \equref{relflag1}.

$(2)$ ${\rm Extd'} \, (E, A) \cong {\mathcal A} {\mathcal H }^{2}
(k, A) \cong {\mathcal F} \, (A) / \approx $, where $\approx$ is
the following relation on the set ${\mathcal F} \, (A)$:
$(\Lambda, \, \lambda, \, D, \, d, \, a_0, \, u)  \approx
(\Lambda', \, \lambda', \, D', \, d', \, a'_0, \, u') $ if and
only if $\Lambda = \Lambda'$, $\lambda = \lambda'$ and there
exists $\alpha \in A$ such that \equref{eci1}-\equref{eci4} are
fulfilled for $q = 1$. The bijection between
${\mathcal F} \, (A)/\approx$ and ${\rm Extd'} \, (E, A)$
is given by:
$$
\overline{\overline{(\Lambda, \, \lambda, \, D, \, d, \, a_0, \,
u)}} \mapsto A \ltimes_{(\Lambda, \, \lambda, \, D, \, d, \, a_0,
\, u)} \, x
$$
where $\overline{\overline{(\Lambda, \, \lambda, \, D, \, d, \,
a_0, \, u)}}$ is the equivalence class of $(\Lambda, \, \lambda,
\, D, \, d, \, a_0, \, u)$ via $\approx$.
\end{theorem}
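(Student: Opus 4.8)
The plan is to reduce everything to the abstract classification of \thref{main1} and then make it concrete through \prref{unifdim1}. By \thref{main1} we have ${\rm Extd}(E, A) \cong \mathcal{A}\mathcal{H}^2_A(k, A) = \mathcal{A}\mathcal{E}(A, V)/\equiv$, where $V$ is one-dimensional with basis $\{x\}$ and $\equiv$ is the relation of \deref{echiaa}; moreover \prref{unifdim1} gives a bijection $\mathcal{A}\mathcal{E}(A, V) \cong \mathcal{F}(A)$ under which the displayed map $\overline{(\Lambda, \lambda, D, d, a_0, u)} \mapsto A \ltimes_{(\Lambda, \lambda, D, d, a_0, u)} x$ is precisely the composite. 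Hence the only thing to prove is that $\equiv$, read off on six-tuples, is the relation described by \equref{eci1}--\equref{eci4}. The key observation is that, since $\dim V = 1$, a pair $(r, v)$ with $r : V \to A$ and $v \in {\rm Aut}_k(V)$ is exactly a pair $(q, \alpha) \in k^* \times A$, via $v(x) = q\, x$, $r(x) = \alpha$, so that $v^{-1}(x) = q^{-1} x$. I would then substitute the dictionary of \prref{unifdim1} --- $x \triangleleft a = \Lambda(a) x$, $x \triangleright a = D(a)$, $a \leftharpoonup x = d(a)$, $a \rightharpoonup x = \lambda(a) x$, $f(x, x) = a_0$, $x \cdot x = u\, x$, and its primed counterpart --- into the six identities of \deref{echiaa}.

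Most of these substitutions are immediate. The identities $a \rightharpoonup' x = v(a \rightharpoonup v^{-1}(x))$ and $x \triangleleft' a = v(v^{-1}(x) \triangleleft a)$ collapse at once to $\lambda' = \lambda$ and $\Lambda' = \Lambda$, so the characters are forced to coincide. Feeding the dictionary into the formulas for $x \triangleright' a$ and $a \leftharpoonup' x$ yields $D'(a) = q^{-1}(D(a) + \Lambda(a)\alpha - \alpha a)$ and $d'(a) = q^{-1}(d(a) + \lambda(a)\alpha - a\alpha)$, which rearrange to \equref{eci1} and \equref{eci2} after solving for the unprimed maps. The formula for $x \cdot' y$ at $x = y$ gives $u' = q^{-1}(u - \lambda(\alpha) - \Lambda(\alpha))$, which rearranges to \equref{eci4} once $\lambda = \lambda'$, $\Lambda = \Lambda'$ are used.

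The computational heart is the cocycle. Evaluating the formula for $f'(x, y)$ of \deref{echiaa} at $x = y$ expands into seven terms, each carrying a factor $q^{-2}$ and being a scalar multiple of $a_0$, $\alpha$, $\alpha^2$, $d(\alpha)$ or $D(\alpha)$; collecting them gives
\begin{equation*}
q^2 a'_0 = a_0 + u\alpha + \alpha^2 - \lambda(\alpha)\alpha - \Lambda(\alpha)\alpha - d(\alpha) - D(\alpha).
\end{equation*}
This is not literally \equref{eci3}, and reconciling the two is the step that requires attention: I would substitute the already-established \equref{eci1} and \equref{eci2} at $a = \alpha$, namely $D(\alpha) = q D'(\alpha) + \alpha^2 - \Lambda(\alpha)\alpha$ and $d(\alpha) = q d'(\alpha) + \alpha^2 - \lambda(\alpha)\alpha$. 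The cross terms $\lambda(\alpha)\alpha$ and $\Lambda(\alpha)\alpha$ then cancel and the two $\alpha^2$ contributions combine to leave a single $+\alpha^2$, producing exactly $a_0 = q^2 a'_0 + \alpha^2 - u\alpha + q d'(\alpha) + q D'(\alpha)$, which is \equref{eci3}. This shows $\Omega \equiv \Omega'$ if and only if $\Lambda = \Lambda'$, $\lambda = \lambda'$ and \equref{eci1}--\equref{eci4} hold for some $(q, \alpha) \in k^* \times A$, which is part $(1)$.

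For part $(2)$ I would repeat the computation starting from \deref{echiaabb}. There the relation $\approx$ builds in $\triangleleft' = \triangleleft$ and $\rightharpoonup' = \rightharpoonup$ --- equivalently $\Lambda' = \Lambda$ and $\lambda' = \lambda$ --- and uses only the map $r$, so it is the special case $v = {\rm Id}_V$, i.e. $q = 1$. Setting $q = 1$ throughout the four transported identities turns them into \equref{eci1}--\equref{eci4} with $q = 1$, and the bijection with ${\rm Extd}'(E, A) \cong \mathcal{A}\mathcal{H}^2(k, A)$ follows. The only real obstacle is the bookkeeping of the seven-term cocycle expansion together with the $\alpha^2$-reconciliation against \equref{eci3}; the remaining identities are direct substitutions.
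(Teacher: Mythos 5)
Your proposal is correct and follows exactly the route of the paper's own proof: identify the pair $(r,v)$ with $(\alpha,q)\in A\times k^{*}$ and translate the conditions of \deref{echiaa} (resp.\ \deref{echiaabb} with $q=1$) through the dictionary of \prref{unifdim1}, invoking \thref{main1}. The paper dismisses this translation as ``a little computation''; you actually carry it out, and your seven-term expansion of the cocycle formula together with the substitution of \equref{eci1}--\equref{eci2} at $a=\alpha$ does reconcile correctly with \equref{eci3}.
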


\begin{proof}
Let $(\Lambda, \, \lambda, \, D, \, d, \, a_0, \, u)$, $(\Lambda',
\, \lambda', \, D', \, d', \, a'_0, \, u') \in {\mathcal F} \,
(A)$ and $\Omega(A, V)$, respectively $\Omega' (A, V)$ the
corresponding algebra extending structures constructed in the
proof of \prref{unifdim1}. The proof relies on \prref{unifdim1}
and \thref{main1}. Since ${\rm dim}_k (V) = 1$, any linear map $r:
V \to A$ is uniquely determined by an element $\alpha \in A$ such
that $r(x) = \alpha$, where $\{x\}$ is a basis in $V$. On the
other hand, any automorphism $v$ of $V$ is uniquely determined by
a non-zero scalar $q\in k^*$ such such $v (x) = q x$. Based on
these facts, a little computation shows that the compatibility
conditions from \deref{echiaa} and respectively \deref{echiaabb}
take precisely the form given in $(1)$ and $(2)$ above and hence
the proof is finished.
\end{proof}

Using the first statement of \thref{clasdim1} we are now able to
describe the Galois group ${\rm Gal} \, (B/A)$ of an algebra
extension $A \subseteq B$ for which $A$ has codimension $1$ in
$B$.

\begin{corollary} \colabel{galcodim1}
Let $A$ be an algebra and $(\Lambda, \, \lambda, \, D, \, d, \,
a_0, \, u) \in {\mathcal F} \, (A)$ a flag datum of $A$. Then
there exists an isomorphism of groups
$$
{\rm Gal} \, \bigl( A \ltimes_{(\Lambda, \, \lambda, \, D, \, d,
\, a_0, \, u)} \, x / A\bigl) \, \cong \, {\mathbb G}_A \, \bigl(
\Lambda, \, \lambda, \, D, \, d, \, a_0, \, u \bigl)
$$
where ${\mathbb G}_A \, \bigl( \Lambda, \, \lambda, \, D, \, d, \,
a_0, \, u \bigl)$ is the set of all pairs $(\alpha, \, q) \in A
\times k^* $ such that for any $a \in A$:
\begin{eqnarray*}
&& (1 - q) \, D(a) = \alpha \, a - \Lambda (a) \, \alpha, \qquad
(1 - q) \, d(a) = a \, \alpha - \lambda(a)\, \alpha \eqlabel{eci2aa} \\
&& (1-q^2) \, a_0 = \alpha^2 - u \, \alpha + q \, d (\alpha) + q D
(\alpha), \quad (1-q) \, u = \lambda(\alpha) + \Lambda(\alpha)
\eqlabel{eci4aa}
\end{eqnarray*}
with the multiplication given by $(\alpha, \, q) \cdot (\alpha',
\, q') : = (\alpha' + q' \alpha, \, qq')$, for all $(\alpha, \,
q)$, $(\alpha', \, q') \in {\mathbb G}_A \, \bigl( \Lambda, \,
\lambda, \, D, \, d, \, a_0, \, u \bigl)$.
\end{corollary}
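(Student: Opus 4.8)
The plan is to obtain this as the codimension-one specialization of \coref{galgrup}. By \prref{unifdim1} the algebra $A \ltimes_{(\Lambda, \, \lambda, \, D, \, d, \, a_0, \, u)} x$ is exactly the unified product $A \ltimes V$ attached to the algebra extending structure with $x \triangleleft a = \Lambda(a)\, x$, $x \triangleright a = D(a)$, $a \leftharpoonup x = d(a)$, $a \rightharpoonup x = \lambda(a)\, x$, $f(x,x) = a_0$ and $x \cdot x = u\, x$, where $V = kx$ is one-dimensional. \coref{galgrup} then gives an isomorphism ${\rm Gal}\bigl( A \ltimes_{(\Lambda, \, \lambda, \, D, \, d, \, a_0, \, u)} x / A\bigr) \cong {\mathbb G}_A^V\bigl(\triangleleft, \, \triangleright, \, \leftharpoonup, \, \rightharpoonup, \, f, \, \cdot\bigr)$, so it remains only to rewrite the right-hand group explicitly in terms of the flag datum. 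I would first record the two linear identifications that make the bookkeeping transparent: each $r \in {\rm Hom}_k(V, \, A)$ is determined by $\alpha := r(x) \in A$, and each $\sigma \in {\rm GL}_k(V)$ by the scalar $q \in k^*$ with $\sigma(x) = q\, x$. Thus the underlying set ${\mathbb G}_A^V = {\rm Hom}_k(V, \, A) \times {\rm GL}_k(V)$ is identified with $A \times k^*$ via $(r, \, \sigma) \leftrightarrow (\alpha, \, q)$.

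The computational core is to feed the six defining relations of ${\mathbb G}_A^V\bigl(\triangleleft, \, \triangleright, \, \leftharpoonup, \, \rightharpoonup, \, f, \, \cdot\bigr)$, displayed just before \coref{galgrup}, through these identifications. Since $V$ is spanned by $x$, each relation need only be checked on $x$ (and on an arbitrary $a \in A$ where an element of $A$ appears), and all four actions and the cocycle are then scalar- or $A$-valued multiples prescribed by the flag datum. A direct substitution gives, in turn: the relation involving $f$ becomes $(1-q^2)\, a_0 = \alpha^2 - u\, \alpha + q\, d(\alpha) + q\, D(\alpha)$; the relation involving $\sigma(x \cdot y)$ becomes $(1-q)\, u = \lambda(\alpha) + \Lambda(\alpha)$; the relation for $r(x \triangleleft a)$ becomes $(1-q)\, D(a) = \alpha\, a - \Lambda(a)\, \alpha$; and the relation for $r(a \rightharpoonup x)$ becomes $(1-q)\, d(a) = a\, \alpha - \lambda(a)\, \alpha$. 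The last two relations, $\sigma(x \triangleleft a) = \sigma(x) \triangleleft a$ and $\sigma(a \rightharpoonup x) = a \rightharpoonup \sigma(x)$, hold automatically because $\sigma$ is multiplication by the scalar $q$ and both actions are $k$-linear in their $V$-argument. This identifies ${\mathbb G}_A^V\bigl(\triangleleft, \, \triangleright, \, \leftharpoonup, \, \rightharpoonup, \, f, \, \cdot\bigr)$ with the set ${\mathbb G}_A\bigl(\Lambda, \, \lambda, \, D, \, d, \, a_0, \, u\bigr)$ of the statement.

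Finally I would transport the group law. By \equref{grupstrc} the multiplication on ${\mathbb G}_A^V$ is $(r, \, \sigma) \cdot (r', \, \sigma') = (r' + r \circ \sigma', \, \sigma \circ \sigma')$; evaluating at $x$ yields $(r' + r \circ \sigma')(x) = \alpha' + q'\alpha$ and $(\sigma \circ \sigma')(x) = qq'\, x$, so under the identification the product becomes $(\alpha, \, q) \cdot (\alpha', \, q') = (\alpha' + q'\alpha, \, qq')$, exactly as asserted. I expect no genuine obstacle: the whole argument is a routine specialization of \coref{galgrup}, and the only delicate point is the consistent evaluation of the four actions and the cocycle on the single basis vector $x$, keeping track of the scalar $q$ extracted from $\sigma$ and the element $\alpha$ extracted from $r$. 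Once the dictionary of \prref{unifdim1} is in force, each of the four nontrivial compatibility conditions matches a displayed equation of the statement line by line.
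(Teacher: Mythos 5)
Your proposal is correct and follows essentially the same route as the paper: specialize \coref{galgrup} to the one-dimensional case using the dictionary of \prref{unifdim1}, identify $(r,\sigma)$ with $(\alpha, q)$, and check that the six compatibility conditions reduce to the four displayed equations (the last two being automatic). The substitutions and the transported group law all check out.
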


\begin{proof}
It follows from \thref{clasdim1} taking into account
\prref{unifdim1} and \coref{galgrup}. The multiplication on
${\mathbb G}_A \, \bigl( \Lambda, \, \lambda, \, D, \, d, \, a_0,
\, u \bigl)$ is the one given by \equref{grupstrc} which in our
context comes down to the desired formula.
\end{proof}

Next we will highlight the efficiency of \thref{clasdim1} in
classifying supersolvable algebras. We start with $A = k$: by
computing ${\mathcal A} {\mathcal H}^{2}_{k} \, (k, k)$ we will
classify in fact all $2$-dimensional algebras over an arbitrary
field $k$ since any algebra map between two $2$-dimensional
algebras automatically stabilizes $k$. Thus the next corollary
originates in \cite{peirce1, study}, where all $2$-dimensional
algebras over the field of complex numbers $\CC$ were classified.
By replacing $\CC$ with an arbitrary field $k$ the situation
changes: the number of isomorphism types of $2$-dimensional
algebras depends heavily on the characteristic of $k$ as well as
on the set $k \setminus k^2$, where $k^2 = \{ q^2 \, | \, q \in k
\}$. First we set the notations which will play the key role in
the classification of flag algebras:

If $k^2 \neq k$, we shall fix $S \subseteq k\setminus k^2$ a
system of representatives for the following relation on $k
\setminus k^2$: $d \equiv d'$ if and only if there exists $q \in
k^{*}$ such that $d = q^{2} d'$. Hence, $|S| = [k^* \, : \,
(k^2)^* ] - 1$, where $[k^* \, : \, (k^2)^* ]$ is the index of
$(k^2)^*$ in the multiplicative group $(k^*, \cdot)$.

If ${\rm char} (k) = 2$ and $k^2 \neq k$ we denote by $R \subseteq
k \setminus k^2$ a system of representatives for the following new
equivalence relation on $k \setminus k^2$: $\delta \equiv_1
\delta'$ if and only if there exists $q \in k^{*}$ such that
$\delta - q^{2} \delta' \in k^2$. Then, $|R| \leq [k^* \, : \,
(k^2)^* ] - 1$.

If ${\rm char} (k) = 2$ we also consider the following equivalence
relation on $k$: $c \equiv_2 c'$ if and only if there exists
$\alpha \in k$ such that $c - c' = \alpha^2 - \alpha$. We shall
fix $T \subseteq k$ a system of representatives for this relation
such that $0\in T$.

Based on \thref{clasdim1} we can prove the following results that
classifies all $2$-dimensional algebras over an arbitrary field.
Of course, $(2)$ and $(3)$ below are well known results. For $(4)$
and $(5)$ we are not able to indicate a reference in the
literature.

\begin{corollary}\colabel{dim2alg}
Let $k$ be an arbitrary field. Then:

$(1)$ There exists a bijection
${\mathcal A} {\mathcal H}^{2}_{k} \, (k, k) \cong k \times k
/\equiv $, where $\equiv $ is the equivalence relation on $k\times
k$ defined as follows: $(a, \, b)  \equiv (a', \, b') $ if and
only if there exists a pair $(q, \alpha) \in k^* \times k$ such
that:
\begin{equation}\eqlabel{reldim2}
a = q^2 \, a' + \alpha^2 - b \, \alpha, \qquad
b = q\, b' + 2 \, \alpha
\end{equation}
The bijection between $k \times k /\equiv $ and the isomorphism
classes of all $2$-dimensional algebras is given by $\overline{(a,
b)} \mapsto  k_{(a, b)}$, where $k_{(a, b)}$ is the algebra having
$\{1, x\}$ as a basis and the multiplication given by $x^2 = a + b
x$.

$(2)$ If ${\rm char} (k) \neq 2$ and $k = k^2$, then the factor
set $k \times k /\equiv $ is equal to $\{ \overline{(0, 0)}, \,
\overline{(0, 1)} \}$. Thus, there exist only two types of
$2$-dimensional algebras, namely $k_{(0, 0)}$ and $k_{(0, 1)}
\cong k \times k$.

$(3)$ If ${\rm char} (k) \neq 2$ and $k \neq k^2$ then the factor
set $k \times k /\equiv $ is equal to $\{\overline{(0, 0)}, \,
\overline{(0, 1)}\} \cup \{\overline{(d, 0)} ~|~ d \in S \} $.
Thus, in this case there exist $1 + [k^* \, : \,  (k^2)^* ]$ types
of isomorphisms of $2$-dimensional algebras namely $k_{(0, 0)}$,
$k_{(0, 1)}$ and $k_{(d, 0)}$, for some $d \in S$.

$(4)$ If ${\rm char} (k) = 2$ and $k = k^2$ then the factor set $k
\times k /\equiv $ is equal to $\{\overline{(0, 0)}\} \, \cup \,
\{\overline{(c, 1)} ~|~ c \in T \} $. Thus, in this case there
exist $1 + |T|$ types of isomorphisms of $2$-dimensional algebras
namely $k_{(0, 0)}$ and $k_{(c, 1)}$, for some $c \in T$.

$(5)$ If ${\rm char} (k) = 2$ and $k \neq k^2$ then the factor set
$k \times k /\equiv $ is equal to $\{\overline{(0, 0)}\} \, \cup
\, \{\overline{(c, 1)} ~|~ c \in T \} \, \cup \,
\{\overline{(\delta, 0)} ~|~ \delta \in R \} $. Thus, in this case
there exist $1 + |T| + |R| $ types of isomorphisms of
$2$-dimensional algebras namely $k_{(0, 0)}$, $k_{(c, 1)}$,
$k_{(\delta, 0)}$, for some $c \in T$ and $\delta \in R$.
\end{corollary}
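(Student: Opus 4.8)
The plan is to specialize \thref{clasdim1} to $A = k$ and then unravel the resulting equivalence relation on $k \times k$ in each arithmetic regime. First I would compute $\mathcal{F}(k)$. The only unital algebra map $k \to k$ is the identity, so every flag datum of $k$ has $\Lambda = \lambda = \mathrm{Id}_k$; the conditions $\lambda \circ d = 0$ and $\Lambda \circ D = 0$ in \equref{flag1} then force $d = D = 0$, and the remaining axioms \equref{flag2}--\equref{flag6} collapse to $0 = 0$ (using $a\,a_0 = \lambda(a)\,a_0$ in $k$). Hence $\mathcal{F}(k) \cong k \times k$ via $(a_0, u) \leftrightarrow (a, b)$, and by \prref{unifdim1} the associated unified product is $k_{(a,b)}$, with basis $\{1, x\}$ and single relation $x^2 = a + bx$, the cross relations of \equref{relflag1} reducing to ordinary scalar multiplication. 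Substituting $d = D = 0$, $\Lambda = \lambda = \mathrm{Id}$ into \equref{eci1}--\equref{eci4} makes the first two vacuous and reduces the last two to exactly \equref{reldim2}; thus $(1)$ is \thref{clasdim1}$(1)$ read off for $A = k$.

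For $(2)$ and $(3)$, where $\mathrm{char}(k) \neq 2$, I would first eliminate the second coordinate. Given $(a, b)$, choosing the target $(a', 0)$ and $\alpha = b/2$ in \equref{reldim2} yields $b = 2\alpha$ and $q^2 a' = a + b^2/4$, so every class has a representative $(a', 0)$ --- this is completing the square. Restricting \equref{reldim2} to pairs with second coordinate $0$ forces $\alpha = 0$, leaving $a' = q^2 a''$: two such representatives are equivalent precisely when they differ by a nonzero square, while $(0,0)$ is isolated. If $k = k^2$, every nonzero element is a nonzero square, so all $(a', 0)$ with $a' \neq 0$ merge; checking $(0,1) \equiv (1,0)$ (solve $\alpha = 1/2$, $q^2 = 1/4$) names this class $\overline{(0,1)}$ and gives $(2)$. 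If $k \neq k^2$, the nonzero classes are indexed by $k^*/(k^2)^*$, splitting into the square class $\overline{(0,1)}$ and the nonsquare classes $\overline{(d,0)}$, $d \in S$, for the count $1 + [k^* : (k^2)^*]$ of $(3)$.

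The characteristic-two cases $(4)$ and $(5)$ are where the difficulty concentrates, and I expect them to be the main obstacle: the term $2\alpha$ now vanishes, so the second coordinate can no longer be killed and instead transforms by $b = q b'$, making ``$b = 0$'' versus ``$b \neq 0$'' an invariant dichotomy. For $b \neq 0$ I would normalize to $(c, 1)$ via $q = b$, $\alpha = 0$ (so $c = a/b^2$); comparing two representatives $(c,1)$, $(c',1)$ then forces $q = 1$ and reduces the first equation of \equref{reldim2} to $c - c' = \alpha^2 - \alpha$, the Artin--Schreier relation $\equiv_2$, so these classes are indexed by $T$. For $b = 0$ the equation becomes $a = q^2 a' + \alpha^2$, and here $k = k^2$ versus $k \neq k^2$ is decisive because $\alpha \mapsto \alpha^2$ is the Frobenius: if $k = k^2$ its image is all of $k$ and every $(a,0)$ collapses to $\overline{(0,0)}$, yielding $(4)$ with count $1 + |T|$; if $k \neq k^2$, then $a \in k^2$ still collapses to $\overline{(0,0)}$, while the new classes with $a \notin k^2$ obey $a - q^2 a' \in k^2$ --- and since $k^2$ is additively closed these never meet the square class --- which is the relation $\equiv_1$ with representatives $R$, giving $(5)$ with count $1 + |T| + |R|$.

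The delicate bookkeeping throughout is to confirm that the chosen normal forms are pairwise inequivalent and that the forced values of $q$ and $\alpha$ in each stabilizer computation are mutually consistent. The genuinely subtle point, separating $(4)$ from $(5)$, is disentangling the additive contribution $\alpha^2$, constrained to the Frobenius image $k^2$, from the multiplicative scaling by $q^2$: it is exactly the failure of Frobenius to be surjective that creates the extra family $\overline{(\delta,0)}$, $\delta \in R$.
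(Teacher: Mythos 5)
Your proposal is correct and follows essentially the same route as the paper's proof: it identifies ${\mathcal F}(k)\cong k\times k$ via the unique character of $k$ and the vanishing of $D$ and $d$, specializes the equivalence relation of \thref{clasdim1} to \equref{reldim2}, completes the square when ${\rm char}(k)\neq 2$, and when ${\rm char}(k)=2$ splits on the invariant dichotomy $b=0$ versus $b\neq 0$ using the Frobenius image $k^2$ and the Artin--Schreier relation $\equiv_2$. All normal forms, stabilizer computations and counts agree with the paper's argument.
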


The algebra $k_{(d, 0)}$, for some $d \in S$ is denoted by
$k(\sqrt{d})$ and is a quadratic field extension of $k$.

\begin{proof}
$(1)$ Since there exists only one algebra map $k \to k$, namely
the identity map, and any linear map $D : k \to k$ with $D(1) = 0$
is the trivial map it follows that ${\mathcal F} \, (k) \cong k
\times k$. Through this identification it is straightforward to
see that the equivalence relation in \thref{clasdim1} takes the
form given by \equref{reldim2}.

$(2)$ The fact that $k \times k /\equiv$ has only two elements,
namely $\{ \overline{(0, 0)}, \, \overline{(0, 1)} \}$ follows
trivially. Let $(a, \,b) \in k \times k$. If $a + 4^{-1} b^2 = 0$,
then we can we denote $b = 2 \alpha$, with $\alpha \in k$. Then it
follows that $(a, \,b) = (- \alpha^{2}, \, 2 \alpha)$, for some
$\alpha \in k$ and thus $(a, \, b) \equiv (0, \, 0)$. On the other
hand, if $a + 4^{-1} b^2 \neq 0$, then we can pick $T \in k^{*}$
such that $a + 4^{-1} b^2 = T^2$, since $k = k^2$. Again, we
denote $b = 2 \alpha$, with $\alpha \in k$ and we obtain $(a, \,b)
= ( T^{2} - \alpha^{2}, \, 2 \alpha)$, and hence $(a, \, b) \equiv
(0, \, 1)$.

$(3)$ If  $k \neq k^2$ we will prove that $k \times k /\equiv $
coincides with $ \{\overline{(0, 0)}, \, \overline{(0, 1)}\} \cup
\{\overline{(d, 0)} ~|~ d \in S \} $. Indeed, consider $(a, \,b)
\in k \times k$. Besides from the two possibilities already
studied in $(2)$ we can also have $a + 4^{-1} b^2 = d$, with $d
\in k \setminus k^2$. As before, we denote $b = 2 \alpha$, with
$\alpha \in k$. It follows that $(a,\, b) = (d - \alpha^{2},\, 2
\alpha)$ and therefore $(a, \, b) \equiv (d, \, 0)$.

$(4)$ and $(5)$ The proof is based on the following observations.
$(a, b) \equiv (0,0)$ if and only if $b = 0$ and $a \in k^2$.
Thus, if $k = k^2$, then we have that $(a, 0) \equiv (0,0)$, for
any $a \in k$. If $k \neq k^2$, then $(a, 0)$ is either equivalent
to $(0,0)$ if $a \in k^2$ or to $(\delta, 0)$, for some $\delta
\in R$ in the case that $a \in k \setminus k^2$. We take into
account that $(\delta, 0) \equiv (\delta', 0)$ if and only if
$\delta \equiv_1 \delta'$.

Let now $(a, b) \in k$, with $b \neq 0$. Then $(a, b) \equiv (a
b^{-2}, 1)$. If $a =0$ the latter is equivalent to $(0, 1)$ and,
if $a \neq 0$, $(a b^{-2}, 1)$ is equivalent to $(c, 1)$, for some
$c \in T$ since $(c, 1) \equiv (c', 1)$ if and only if $c \equiv_2
c'$. This finishes the proof.
\end{proof}

\thref{clasdim1} provides the necessary tool for describing and
classifying supersolvable algebras in a purely computational and
algorithmic way. Having described all $2$-dimensional algebras
over arbitrary fields in \coref{dim2alg} we can now take a step
further and describe all supersolvable algebras of dimension $3$,
i.e. all algebras $E$ for which there exists a chain of
subalgebras
\begin{equation} \eqlabel{lant}
k = E_0 \subset E_1 \subset E_2 = E
\end{equation}
such that ${\rm dim} (E_1) = 2$. The algebras described in this
way will be classified up to an isomorphism that stabilizes $E_1$
by using \thref{clasdim1}. First we should notice that since $E_1$
has dimension $2$, it should coincide with one the following
algebras: $k_{(0, 0)}$, $k_{(0, 1)}$, $k_{(d, 0)} = k(\sqrt{d})$,
or $k_{(c, 1)}$. Now, the algebra $k(\sqrt{d})$ has no characters
(i.e. there exist no algebra maps $k(\sqrt{d}) \to k$). This means
that ${\mathcal F} \, (k(\sqrt{d}))$ is empty, i.e. there exist no
$3$-dimensional algebras containing $k(\sqrt{d})$ as a subalgebra.
Thus, any $3$-dimensional supersolvable algebra $E$ has the middle
term $E_1$ isomorphic to $k_{(0, 0)}$, $k_{(0, 1)}$ if ${\rm char}
(k) \neq 2$ or to $k_{(0, 0)}$, $k_{(c, 1)}$, for some $c\in T$ in
the case when ${\rm char} (k) = 2$. In what follows we describe
all $3$-dimensional supersolvable algebras $E$ that contain and
stabilize $k_{(0, 0)}$ as a subalgebra. First, we need to describe
the set of all flag datums of the algebra $k_{(0,0)}$:

\begin{lemma}\lelabel{flag00}
Let $k$ be a field. Then ${\mathcal F} \, (k_{(0,0)})$ is the
coproduct of the following sets:
$$
{\mathcal F} \, (k_{(0,0)}) := {\mathcal F}_1 \, (k_{(0,0)})
\sqcup {\mathcal F}_2 \, (k_{(0,0)}) \sqcup {\mathcal F}_3 \,
(k_{(0,0)}), \qquad {\rm where:}
$$
${\mathcal F}_1 \, (k_{(0,0)}) \cong k \times k^* \times k$, with
the bijection given such that the flag datum $(\Lambda, \,
\lambda, \, D, \, d, \, a_0, \, u)$ corresponding to $(D_1, \,
a_{01}, \, u) \in k \times k^* \times k$ is defined by:
$$
\lambda (x) = \Lambda (x) := 0, \quad D(x) = d(x) := D_1 x, \quad
a_0 := D^2_1 - u\, D_1 + a_{01}\, x, \quad u:= u
$$
${\mathcal F}_2 \, (k_{(0,0)}) \cong k^2$, with the bijection
given such that the flag datum $(\Lambda, \, \lambda, \, D, \, d,
\, a_0, \, u)$ corresponding to $(D_1, \, u) \in k^2$ is defined
by:
$$
\lambda (x) = \Lambda (x) := 0, \quad D(x) = d(x) := D_1 x, \quad
a_0 := D^2_1 - u\, D_1,  \quad u:= u
$$
${\mathcal F}_3 \, (k_{(0,0)}) \cong \{ (D_1, \, d_1) \in k \times
k \,\,  | \,\, D_1 \neq d_1 \}$ with the bijection given such that
the flag datum  $(\Lambda, \, \lambda, \, D, \, d, \, a_0, \, u)$
corresponding to $(D_1, \, d_1) \in k \times k$ (with $D_1 \neq
d_1$) is defined by:
$$
\lambda (x) = \Lambda (x) := 0, \quad D(x) := D_1 x, \quad d(x) :=
d_1 x, \quad a_0 := - D_1 d_1, \quad u := D_1 + d_1
$$
\end{lemma}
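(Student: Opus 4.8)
The plan is to unwind the defining compatibilities \equref{flag1}--\equref{flag6} of a flag datum directly for $A = k_{(0,0)}$, exploiting the relation $x^2 = 0$ to collapse almost all of the bilinear constraints. The first step is to determine the characters of $k_{(0,0)}$: any algebra map $\phi : k_{(0,0)} \to k$ satisfies $\phi(1) = 1$ and $\phi(x)^2 = \phi(x^2) = 0$, so $\phi(x) = 0$ and there is a single character. Hence in every flag datum one is forced to have $\Lambda = \lambda$, the unique character with $\lambda(1) = 1$ and $\lambda(x) = 0$; this is why all three families in the statement record $\lambda(x) = \Lambda(x) = 0$. The identification $\Lambda = \lambda$ already trivializes part of \equref{flag1} (the condition $\Lambda(a_0) = \lambda(a_0)$) and makes the right-hand sides of both \equref{flag4} and \equref{flag45} vanish, since they are proportional to $\Lambda(a) - \lambda(a)$.

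Next I would pin down $D$ and $d$. As noted after \deref{tehnica}, \equref{flag2} gives $D(1) = d(1) = 0$, while $\lambda \circ d = 0$ and $\Lambda \circ D = 0$ force $D$ and $d$ to take values in $\Ker \lambda = k\,x$. Writing a general element as $a = \lambda(a)\,1 + e_a\, x$, it follows that $D$ and $d$ are each governed by a single scalar, $d(x) = d_1\, x$ and $D(x) = D_1\, x$ with $d_1, D_1 \in k$. I would then verify that with this shape the twisted-derivation axioms \equref{flag2} hold automatically — the only nonobvious instance is $a = b = x$, where both sides are multiples of $x^2 = 0$. The same phenomenon disposes of \equref{flag6}, which under commutativity and $\Lambda = \lambda$ rearranges to $(a - \lambda(a))\, D(b) = (b - \lambda(b))\, d(a)$ with both sides lying in $(k\,x)(k\,x) = 0$, and of \equref{flag4}, which holds because $D$ and $d$ both act as scalar multiplication on the line $k\,x$ and therefore commute.

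The genuine content then lives in \equref{flag1} and \equref{flag3}. Writing $a_0 = \alpha_0 + \beta_0\, x$, evaluation of \equref{flag3} at $a = x$ uses $d^2(x) = d_1^2\, x$ and $x\, a_0 - \lambda(x)\, a_0 = \alpha_0\, x$, so the first identity reads $\alpha_0 = d_1^2 - u\, d_1$ and the second, symmetrically, $\alpha_0 = D_1^2 - u\, D_1$. Subtracting produces the key relation $(d_1 - D_1)(d_1 + D_1 - u) = 0$, while the surviving part $D(a_0) = d(a_0)$ of \equref{flag1} becomes $\beta_0\,(D_1 - d_1) = 0$. Thus a flag datum of $k_{(0,0)}$ amounts exactly to a choice of scalars $d_1, D_1, u, \beta_0$ with $\alpha_0$ determined by $\alpha_0 = d_1^2 - u\, d_1$, subject to these two relations.

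The endgame is a case split. If $d_1 = D_1$, both relations are automatic and $u, \beta_0$ are free with $\alpha_0 = D_1^2 - u\, D_1$; separating $\beta_0 \neq 0$ from $\beta_0 = 0$ yields precisely ${\mathcal F}_1 \,(k_{(0,0)}) \cong k \times k^* \times k$, with $\beta_0 = a_{01}$, and ${\mathcal F}_2 \,(k_{(0,0)}) \cong k^2$. If $d_1 \neq D_1$, the factorization forces $u = d_1 + D_1$ and $\beta_0 = 0$, whence $\alpha_0 = -D_1 d_1$, which is exactly ${\mathcal F}_3 \,(k_{(0,0)}) \cong \{\,(D_1, d_1) \mid D_1 \neq d_1\,\}$. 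These three cases are mutually exclusive and exhaustive, so they assemble into the claimed coproduct, and reading off the parametrizations gives the three stated bijections. The individual computations are routine once $x^2 = 0$ is invoked; I expect the only delicate point to be the bookkeeping of the case analysis, namely checking that the pair of relations $(d_1 - D_1)(d_1 + D_1 - u) = 0$ and $\beta_0(D_1 - d_1) = 0$ partitions the parameter space into exactly these three disjoint families.
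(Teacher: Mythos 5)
Your proposal is correct and follows essentially the same route as the paper: identify the unique character of $k_{(0,0)}$, reduce $D$ and $d$ to scalars $D_1, d_1$ via $\Ker\lambda = kx$ and $D(1)=d(1)=0$, observe that \equref{flag2}, \equref{flag4}, \equref{flag45} and \equref{flag6} hold automatically because $x^2=0$, and extract from \equref{flag1} and \equref{flag3} exactly the two constraints $a_{01}D_1 = a_{01}d_1$ and $D_1^2 - uD_1 = d_1^2 - ud_1$ that the paper records, after which the case analysis yields the three stated families. The only cosmetic difference is that you split first on $D_1 = d_1$ versus $D_1 \neq d_1$ while the paper splits on $a_{01} = 0$ versus $a_{01} \neq 0$; the resulting partition is identical.
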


\begin{proof}
The algebra $k_{(0,0)}$ has only one character, namely the map
$\Lambda : k_{(0, 0)} \to k$, defined by $\Lambda (1) = 1$ and
$\Lambda (x) = 0$. A straightforward computation shows that the
set ${\mathcal F} \, (k_{(0,0)})$ of all flag datums of $k_{(0,
0)}$ identifies with the set of all quadruples $(D_1, \, d_1, \,
a_{01}, \, u) \in k^4$ satisfying the following two
compatibilities:
$$
a_{01} \, D_1 = a_{01} \, d_1, \qquad D^2_1 - u \, D_1 =  d^2_1 -
u \, d_1
$$
Under this bijection the flag datum $(\Lambda, \, \lambda, \, D,
\, d, \, a_0, \, u)$ associated to the quadruple $(D_1, \, d_1, \,
a_{01}, \, u) \in k^4$ is given by:
$$
\lambda (x) = \Lambda (x) := 1, \quad D(x) := D_1 x, \quad d(x) :=
d_1 x, \quad a_0 := D^2_1 - u\, D_1 + a_{01}\, x
$$
A detailed discussion on these coefficients (if $a_{01} = 0$ or
$a_{01} \neq 0$) allows us to write ${\mathcal F} \, (k_{(0,0)})$
as the disjoint union of the sets mentioned in the statement.
\end{proof}

The  next result classifies all $3$-dimensional supersolvable
algebras that contain and stabilize $k_{(0,0)}$ as a subalgebra.
The classification does not depend on the characteristic of the
field $k$.

\begin{corollary} \colabel{flag3a}
Let $k$ be an arbitrary field.

$(1)$ If $k = k^2$, then there exist five isomorphism classes of
$3$-dimensional supersolvable algebras that contain and stabilize
$k_{(0,0)}$ as a subalgebra, each of them having the $k$-basis
$\{1, x, y\}$ and relations given as follows:
\begin{eqnarray*}
&A^{0}_1:& \qquad x^2 = 0, \quad y^2 = y, \qquad \quad xy = x, \quad yx = 0; \\
&A^{0}_2:& \qquad x^2 = 0, \quad y^2 = y, \qquad \quad xy = yx = 0; \\
&A^{0}_3:& \qquad x^2 = 0, \quad y^2 = 0, \qquad \quad xy = yx = 0; \\
&A^{0}_4:& \qquad x^2 = 0, \quad y^2 = x, \qquad \quad xy = yx = 0; \\
&A^{0}_5:& \qquad x^2 = 0, \quad y^2 = x + y, \quad \,\, xy = yx =
0
\end{eqnarray*}
$(2)$ If $k \neq k^2$, then there exist $4 + [k^* \, : \, (k^2)^*
] $ isomorphism classes of $3$-dimensional supersolvable algebras
that contain and stabilize $k_{(0,0)}$ as a subalgebra, namely
those from $(1)$ and an additional family defined for any $d \in
S$ by the relations:\footnote{We recall that $|S| = [k^* \, : \,
(k^2)^* ] - 1$}
$$
A^{0} (d): \qquad \quad x^2 = 0, \quad y^2 = d\, x, \quad xy = yx
= 0
$$
\end{corollary}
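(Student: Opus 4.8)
The plan is to obtain the classification directly from \thref{clasdim1} together with the explicit list of flag datums in \leref{flag00}. By \thref{clasdim1}$(1)$ the algebra structures on $E$ that contain and stabilize $k_{(0,0)}$ are parametrized, up to isomorphism, by the quotient ${\mathcal F}\,(k_{(0,0)})/\!\equiv$, and the algebra attached to a flag datum is the one presented by the relations \equref{relflag1} of \prref{unifdim1} (where, to match the basis $\{1,x,y\}$ of the statement, I rename the generator of the $1$-dimensional $V$ by $y$, the symbol $x$ being reserved for $k_{(0,0)}$). Since \leref{flag00} already decomposes ${\mathcal F}\,(k_{(0,0)})$ into the three concrete families ${\mathcal F}_1,{\mathcal F}_2,{\mathcal F}_3$, the whole problem collapses to computing the relation $\equiv$ of \thref{clasdim1}$(1)$ on these parameters, selecting a transversal, and translating the representatives back through \equref{relflag1}.

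First I would make $\equiv$ fully explicit on $k_{(0,0)}$. This algebra has a unique character, so $\Lambda=\lambda$ is forced; by \leref{flag00} the maps $D,d$ are the scalars $D(x)=D_1x$, $d(x)=d_1x$, and any $\alpha\in k_{(0,0)}$ is $\alpha=\alpha_0+\alpha_1x$. Substituting into \equref{eci1}--\equref{eci4} and separating the $1$- and $x$-components reduces the system to
\[
D_1 = qD_1'+\alpha_0,\qquad d_1 = qd_1'+\alpha_0,\qquad u = qu'+2\alpha_0,
\]
together with one relation for the $x$-part $a_{01}$ of the cocycle, of the shape $a_{01}=q^2a_{01}'+q\alpha_1\,(D_1'+d_1'-u')$ (the scalar part of $a_0$ carries no new information, being already pinned down by the flag-datum constraints of \leref{flag00}). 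An immediate consequence is $D_1-d_1=q(D_1'-d_1')$, so the condition $D_1\neq d_1$ is $\equiv$-invariant and separates ${\mathcal F}_3$ from ${\mathcal F}_1\cup{\mathcal F}_2$.

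Next I would run the case analysis. On ${\mathcal F}_3$ one has $D_1\neq d_1$ and $u=D_1+d_1$, with $(q,\alpha_0)$ acting by $(D_1,d_1)\mapsto(qD_1+\alpha_0,\,qd_1+\alpha_0)$; rescaling $D_1-d_1$ to $1$ and then translating the common part to $0$ shows this entire stratum is a single class, the algebra $A^0_1$. On the locus $D_1=d_1$ I would use $\alpha_0$ to normalize the common value to $0$ and then read off representatives from two invariants: the quantity $u-2D_1$, scaled by $q$ (so only its vanishing is intrinsic), and $a_{01}$, scaled by $q^2$ but additionally shifted by $q\alpha_1(D_1+d_1-u)$. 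This yields the idempotent-type forms $y^2=y$ ($A^0_2$) and $y^2=x+y$ ($A^0_5$) in the stratum $u\neq 2D_1$, the nilpotent form $y^2=0$ ($A^0_3$) when $u=2D_1$ and $a_{01}=0$, and the forms $y^2=a_{01}x$ when $u=2D_1$ and $a_{01}\neq 0$, namely $A^0_4$ together with the family $A^0(d)$.

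The main obstacle is the treatment of $a_{01}$ in the last two strata, and it is the only place where the base field enters. In the stratum $u=2D_1$ the shift term vanishes, so $a_{01}$ is constrained only up to the $q^2$-action and the classes are indexed by the orbits of $k^*$ under multiplication by $(k^2)^*$: when $k=k^2$ every nonzero cocycle is a square and collapses to $y^2=x$, whereas when $k\neq k^2$ the non-trivial square classes are exactly the transversal $S$ of \coref{dim2alg}, each contributing one extra algebra $A^0(d)$ with $y^2=dx$; this is what produces the count $4+[k^*:(k^2)^*]$ of part $(2)$. The genuinely delicate point, by contrast, is the stratum $u\neq 2D_1$: there the shift $q\alpha_1(D_1+d_1-u)$ is available, so one must determine precisely how much of $a_{01}$ it can absorb, as this is exactly what decides whether the two presentations $A^0_2$ and $A^0_5$ remain distinct. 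Once this interplay between the $\alpha_1$-shift and the rescalings by $q$ and $q^2$ is settled, checking that the selected normal forms are pairwise $\equiv$-inequivalent is routine, and the characteristic of $k$ plays no role, in contrast with the two-dimensional count of \coref{dim2alg}.
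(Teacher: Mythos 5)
Your strategy is the same as the paper's: parametrize the extensions by ${\mathcal F}\,(k_{(0,0)})/\equiv$ via \thref{clasdim1}, feed in the stratification of \leref{flag00}, and compute the relation \equref{eci1}--\equref{eci4} componentwise in the basis $\{1,x\}$ of $k_{(0,0)}$. Your explicit form of the relation is correct (the $x$-component of \equref{eci3} does reduce to $a_{01}=q^2a_{01}'+q\alpha_1(D_1'+d_1'-u')$ after substituting $u=qu'+2\alpha_0$), the observation that $D_1-d_1=q(D_1'-d_1')$ isolates ${\mathcal F}_3$ is right, and your treatment of ${\mathcal F}_3$ and of the locus $u=2D_1$ (where the shift term vanishes, so $a_{01}$ is only rescaled by $q^2$ and the square-class analysis produces $A^0_3$, $A^0_4$ and the family $A^0(d)$) matches the paper.

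The gap is that you defer precisely the computation that decides the answer, and you assert its outcome without performing it. On the stratum $D_1=d_1$, $u\neq 2D_1$ one has $2D_1'-u'\neq 0$, so the shift $q\alpha_1(2D_1'-u')$ sweeps out all of $k$ as $\alpha_1$ varies: the parameter $a_{01}$ can be normalized to $0$, and the entire stratum is a \emph{single} $\equiv$-class. In particular $A^0_2$ and $A^0_5$ are not distinct: in $A^0_5$ the element $z:=x+y$ satisfies $z^2=x^2+xy+yx+y^2=x+y=z$ and $xz=zx=0$, so $x\mapsto x$, $y\mapsto x+y$ defines an isomorphism $A^0_2\to A^0_5$ that fixes $k_{(0,0)}$ pointwise, in every characteristic. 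Hence the step you flag as ``to be settled'' cannot be settled in favour of the claimed list: the stratum $u\neq 2D_1$ contributes one class, not two, and the five normal forms are not pairwise inequivalent. Note that the paper's own proof does not escape this either: it computes $\equiv$ separately inside ${\mathcal F}_1$ and ${\mathcal F}_2$, which tacitly assumes that these strata are unions of equivalence classes, and that assumption fails exactly on the locus $u\neq 2D_1$, where a single class meets both strata (one can move $a_{01}$ to and from $0$). Your instinct that the $\alpha_1$-absorption ``is exactly what decides whether $A^0_2$ and $A^0_5$ remain distinct'' is correct; carrying it out shows they coincide, so the proposal as written does not prove the statement, and the count in part $(1)$ should be scrutinized rather than taken as the target.
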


\begin{proof}
The proof is similar to the one of \coref{dim2alg}. Indeed, in
\leref{flag00} we have described the set of all flag datums
${\mathcal F} \, (k_{(0,0)})$. The equivalence relation from
\thref{clasdim1} takes the following form for each of the sets
${\mathcal F}_i \, (k_{(0,0)})$, $i = 1, 2, 3$:

$\bullet$ For ${\mathcal F}_3 \, (k_{(0,0)})$: $(D_1, d_1) \equiv
(D'_1, d'_1)$ if and only if there exists $(q, \alpha) \in k^*
\times k$ such that
$$
D_1 = q D'_1 + \alpha, \qquad d_1 = q d'_1 + \alpha
$$
In this case the factor set ${\mathcal F}_3 \, (k_{(0,0)})/
\equiv$ is a singleton having $\overline{(0, 1)}$ as the only
element. The algebra associated to $\overline{(0, 1)}$, i.e. the
unified product $k_{(0,0)} \ltimes_{(\Lambda, \, \lambda, \, D, \,
d, \, a_0, \, u)} \, y$ from \thref{clasdim1}, is precisely the
noncommutative algebra $A^{0}_1$.

$\bullet$ For ${\mathcal F}_2 \, (k_{(0,0)})$: $(D_1, u) \equiv
(D'_1, u')$ if and only if there exists $(q, \alpha) \in k^*
\times k$ such that
$$
D_1 = q D'_1 + \alpha, \qquad u = q u' + 2 \alpha
$$
We can easily show that, regardless of the characteristic of $k$,
the factor set ${\mathcal F}_2 \, (k_{(0,0)})/ \equiv $ has two
elements namely $\{\overline{(0, 0)}, \, \overline{(0, 1)} \}$.
The associated unified products \\ $k_{(0,0)} \ltimes_{(\Lambda,
\, \lambda, \, D, \, d, \, a_0, \, u)} \, y$ are the algebras
$A^{0}_2$ and $A^{0}_3$.

$\bullet$ For ${\mathcal F}_1 \, (k_{(0,0)})$: $(D_1, a_{01}, u)
\equiv (D'_1, a'_{01}, u') \in k\times k^* \times k $ if and only
if there exists $(q, \alpha_0, \alpha_1) \in k^* \times k \times
k$ such that
\begin{equation} \eqlabel{altrilecaz}
D_1 = q D'_1 + \alpha_0, \quad u = q u' + 2 \alpha_0, \quad a_{01}
= q^2 \, a'_{01} - q \alpha_1 u' + 2 q \alpha_1 D'_1
\end{equation}
Suppose first that ${\rm char} (k) \neq 2$. In order to compute
the factor set ${\mathcal F}_1 \, (k_{(0,0)})/ \equiv$ we
distinguish two cases. First, if $k = k^2$, we can easily show
that ${\mathcal F}_1 \, (k_{(0,0)})/ \equiv $ has two elements
namely $\{\overline{(0, 1, 0)}, \overline{(0, 1, 1)} \}$; the
unified products $k_{(0,0)} \ltimes_{(\Lambda, \, \lambda, \, D,
\, d, \, a_0, \, u)} \, y$ associated to $\overline{(0, 1, 0)}$
and $\overline{(0, 1, 1)}$ are precisely the algebras $A^{0}_4$
and $A^{0}_5$.

Secondly, if $k \neq k^2$, it is straightforward to see that $
{\mathcal F}_1 \, (k_{(0,0)})/ \equiv $ is precisely the set
$\{\overline{(0, 1, 0)}, \overline{(0, 1, 1)} \} \, \cup \,  \{
\overline{(0, d, 0)} \, | \, d \in S \}$. Moreover, the unified
product associated to $\overline{(0, d, 0)}$, for some $d\in S$,
is the algebra $A^{0}(d)$.

Assume now that ${\rm char} (k) = 2$. Then the equivalence
relation given by \equref{altrilecaz} takes the form:
$$
D_1 = q D'_1 + \alpha_0, \quad u = q u', \quad a_{01} = q^2 \,
a'_{01} - q \alpha_1 u'
$$
The factor set ${\mathcal F}_1 \, (k_{(0,0)})/ \equiv$ is the same
as in the case ${\rm char} (k) \neq 2$. This can be easily seen
from the following observations: for any $u \neq 0$, we have that
$(D_1, a_{01}, u) \equiv (0, 1, 1)$ and $(D_1, a_{01}, 0) \equiv
(0, 1, 0)$ if and only if $a_{01} \in (k^*)^2$. Finally, for $d
\in k \setminus k^2$ we have that $(D_1, d, 0) \equiv (0, d, 0)$
and the proof is finished since $(0, d, 0) \equiv (0, d', 0)$ if
and only if there exists $q \in k^*$ such that $d = q^2 d'$.
\end{proof}

Next we will classify all supersolvable algebras of dimension $3$
that contain and stabilize $k_{(0,1)}$ as a subalgebra: remark
that even in characteristic $2$ this algebra still exists as an
intermediary algebra corresponding to $k_{(c,1)}$, for $c := 0 \in
T$, since we have chosen a system of representatives $T$ which
contains $0$. First we need the following:

\begin{lemma}\lelabel{flag01}
Let $k$ be a field. Then ${\mathcal F} \, (k_{(0,1)})$ is the
coproduct of the following sets:
$$
{\mathcal F} \, (k_{(0,1)}) := {\mathcal F}_1 \, (k_{(0,1)})
\sqcup {\mathcal F}_2 \, (k_{(0,1)}) \sqcup {\mathcal F}_3 \,
(k_{(0,1)})\sqcup {\mathcal F}_4 \, (k_{(0,1)}), \qquad {\rm
where:}
$$
${\mathcal F}_1 \, (k_{(0,1)}) \cong k^3$ with the bijection given
such that the flag datum $(\Lambda, \, \lambda, \, D, \, d, \,
a_0, \, u)$ corresponding to $(D_1, a_{01}, u) \in k^3$ is defined
by
$$
\Lambda (x) = \lambda (x) := 0, \quad D(x) = d(x) := D_1 \, x,
\quad a_0 := D^2_1 - u\, D_1 - a_{01} + a_{01} \, x, \quad u:=u
$$
${\mathcal F}_2 \, (k_{(0,1)}) \cong k^3$ with the bijection given
such that the flag datum $(\Lambda, \, \lambda, \, D, \, d, \,
a_0, \, u)$ corresponding to $(D_1, a_{01}, u) \in k^3$ is defined
by
$$
\Lambda (x) = \lambda (x) := 1, \,\, D(x) = d(x) := D_1 (1 - x),
\,\, a_0 := D_1^2 + u D_1 + a_{01} \,x, \,\,\, u := u
$$
${\mathcal F}_3 \, (k_{(0,1)}) \cong k^2$ with the bijection given
such that the flag datum $(\Lambda, \, \lambda, \, D, \, d, \,
a_0, \, u)$ corresponding to $(D_1, d_1) \in k^2$ is defined by
$$
\Lambda (x) := 0, \,\,  \lambda (x) := 1, \,\, D(x) := D_1 \, x,
\,\, d(x) := d_1 (1 -x), \,\, a_0 := D_1\, d_1, \,\, u := D_1 -
d_1
$$
${\mathcal F}_4 \, (k_{(0,1)}) \cong k^2$ with the bijection given
such that the flag datum $(\Lambda, \, \lambda, \, D, \, d, \,
a_0, \, u)$ corresponding to $(D_1, d_1) \in k^2$ is defined by
$$
\Lambda (x) := 1, \,\,  \lambda (x) := 0, \,\, D(x) := D_1 (1 -
x), \,\, d(x) := d_1 \, x, \,\, a_0 := D_1\, d_1, \,\, u := d_1 -
D_1
$$
\end{lemma}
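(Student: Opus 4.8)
The plan is to follow the pattern of the proof of \leref{flag00}, the new ingredient being that $k_{(0,1)}$ carries two characters instead of one. Since the generator $x$ satisfies $x^2 = x$, every algebra map $\phi : k_{(0,1)} \to k$ has $\phi(x)^2 = \phi(x)$ and $\phi(1) = 1$, so $\phi(x) \in \{0, 1\}$ and there are exactly two characters. Consequently, in a flag datum $(\Lambda, \lambda, D, d, a_0, u)$ the pair $(\Lambda(x), \lambda(x))$ takes one of the four values $(0,0)$, $(1,1)$, $(0,1)$, $(1,0)$, and these four mutually exclusive possibilities are precisely what produces the coproduct decomposition ${\mathcal F}(k_{(0,1)}) = {\mathcal F}_1 \sqcup {\mathcal F}_2 \sqcup {\mathcal F}_3 \sqcup {\mathcal F}_4$. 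The remaining task is, in each case, to solve the axioms \equref{flag1}--\equref{flag6} for $D$, $d$, $a_0$, $u$.

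First I would determine the shape of $D$ and $d$. Recall, from the remark following \deref{tehnica}, that $D(1) = d(1) = 0$, so each map is determined by its value on $x$, which I expand in the basis $\{1, x\}$. Feeding $x^2 = x$ into \equref{flag2} gives $d(x) = x\,d(x) + \lambda(x)\,d(x)$ and $D(x) = \Lambda(x)\,D(x) + D(x)\,x$; a one-line computation then shows that when the relevant character kills $x$ the map sends $x$ to a scalar multiple of $x$, while when it sends $x$ to $1$ the map sends $x$ to a scalar multiple of $1 - x$. This reproduces the stated normal forms for $D(x)$ and $d(x)$ in all four families.

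Next I would substitute these normal forms into the surviving axioms. In the two equal-character families ${\mathcal F}_1$ and ${\mathcal F}_2$ the identities \equref{flag4} and \equref{flag45} hold trivially (both sides vanish), axiom \equref{flag6} forces $D = d$, and the two halves of \equref{flag3} collapse to a single scalar relation expressing the $1$-component of $a_0$ through the other parameters; this leaves $(D_1, a_{01}, u)$ free, whence $k^3$. In the two mixed families ${\mathcal F}_3$ and ${\mathcal F}_4$ axiom \equref{flag45} determines $u$ (as $\pm(D_1 - d_1)$), after which the two halves of \equref{flag3} force $a_0 = D_1 d_1 \in k$, its $x$-component vanishing, while \equref{flag1}, \equref{flag4} and \equref{flag6} impose nothing further; this leaves $(D_1, d_1)$ free, whence $k^2$. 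To halve the labour I would carry out only the ${\mathcal F}_1$ and ${\mathcal F}_3$ computations in full and deduce ${\mathcal F}_2$, ${\mathcal F}_4$ from them via the algebra automorphism $x \mapsto 1 - x$ of $k_{(0,1)}$, which interchanges the two characters and hence swaps the pairs $({\mathcal F}_1, {\mathcal F}_2)$ and $({\mathcal F}_3, {\mathcal F}_4)$.

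I expect the only real difficulty to be organisational rather than conceptual: there are six axioms, several of them in two symmetric halves, to be tested against four separate normal forms, so the main obstacle is the bookkeeping needed to confirm that in the $k^3$-families no hidden relation lowers the dimension and that in the $k^2$-families the displayed formulas for $u$ and $a_0$ are exactly the constraints imposed and no more. The passage to the opposite algebra, which, $k_{(0,1)}$ being commutative, is an internal symmetry interchanging $(\Lambda, D)$ with $(\lambda, d)$, offers a second consistency check on the ${\mathcal F}_3$/${\mathcal F}_4$ computation.
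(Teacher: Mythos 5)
Your proposal is correct and follows essentially the same route as the paper: split ${\mathcal F}(k_{(0,1)})$ into four cases according to the two characters $x\mapsto 0$ and $x\mapsto 1$, read off the normal forms of $D$ and $d$ from \equref{flag2}, and then verify that the remaining axioms pin down $a_0$ and (in the mixed cases) $u$ exactly as stated. The paper simply declares this "straightforward" after the case split, so your extra details — including the shortcuts via the automorphism $x\mapsto 1-x$ and the opposite-algebra involution — are a harmless refinement rather than a different method.
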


\begin{proof}
Since $x^2 = x$ it follows that the algebra $k_{(0,1)}$ has only
two characters, namely the maps that send $x \mapsto 0$ and
respectively $x \mapsto 1$. Thus, in order to compute ${\mathcal
F} \, (k_{(0,1)})$ we distinguish four cases depending on the
characters $(\Lambda, \lambda)$. More precisely, ${\mathcal F}_1
\, (k_{(0,1)})$ will parameterize all flag datums for which
$\Lambda (x) = \lambda (x) = 0$ while ${\mathcal F}_2 \,
(k_{(0,1)})$ those for which $\Lambda (x) = \lambda (x) = 1$. We
are left with two more cases: ${\mathcal F}_3 \, (k_{(0,1)})$
(resp. ${\mathcal F}_4 \, (k_{(0,1)})$) parameterizes all flag
datums for which $\Lambda (x) := 0$ and $\lambda (x) = 1$ (resp.
$\Lambda (x) = 1$ and $\lambda (x) = 0$). The conclusion follows
in a straightforward manner by applying \deref{tehnica}.
\end{proof}

Now we shall classify all $3$-dimensional supersolvable algebras
that contain and stabilize $k_{(0,1)}$ as a subalgebra. This time,
the classification depends heavily on the base field $k$.

\begin{corollary} \colabel{flag3b}
The isomorphism classes of all $3$-dimensional supersolvable
algebras that contain and stabilize $k_{(0,1)}$ as a subalgebra
are given as follows:

$(1)$ If ${\rm char} (k) \neq 2$ and $k = k^2$, then there are
five such isomorphism classes, namely the algebras with $k$-basis
$\{1, x, y\}$ and relations given as follows:
\begin{eqnarray*}
&A^{1}_1:& \qquad x^2 = x, \quad y^2 = 0, \qquad \quad xy = yx = 0 \\
&A^{1}_2:& \qquad x^2 = x, \quad y^2 = x-1, \quad \,\,  xy = yx = 0 \\
&A^{1}_3:& \qquad x^2 = x, \quad y^2 = 0, \qquad \quad xy = yx = y \\
&A^{1}_4:& \qquad x^2 = x, \quad y^2 = x, \qquad \quad xy = yx = y \\
&A^{1}_5:& \qquad x^2 = x, \quad y^2 = 0, \qquad \,\,\,\,\,\, xy =
y, \quad yx = 0
\end{eqnarray*}
$(2)$ If ${\rm char} (k) \neq 2$ and $k \neq k^2$, then there
exist $3 + 2\, [k^* \, : \,  (k^2)^* ]$ such isomorphism classes.
These are the five types from (1) and two additional families
defined by the following relations for any $d\in S$:
\begin{eqnarray*}
&B_{1}(d):& \qquad x^2 = x, \quad y^2 = d \,(x-1), \qquad xy = yx = 0 \\
&B_{2}(d):& \qquad x^2 = x, \quad y^2 = d \, x, \qquad \qquad
\,\,\, xy = yx = y
\end{eqnarray*}

$(3)$ If ${\rm char} (k) = 2$ and $k = k^2$, then there exist $3 +
2\, |T|$ such isomorphism classes, namely the algebras having
$\{1, x, y\}$ as a $k$-basis and subject to the following
relations for any $c\in T$:
\begin{eqnarray*}
C^{1}_1&:& \qquad x^2 = x, \quad y^2 = 0, \qquad \qquad \qquad \,\, xy = yx = 0 \\
C^{1}_2(c)&:& \qquad x^2 = x, \quad y^2 = c + (c+1)\, x, \quad \,\,\, xy = yx = 0 \\
C^{1}_3&:& \qquad x^2 = x, \quad y^2 = 0, \qquad \quad \qquad \quad \, xy = yx = y \\
C^{1}_4(c)&:& \qquad x^2 = x, \quad y^2 = y + c\, x, \qquad \qquad   xy = yx = y \\
C^{1}_5&:& \qquad x^2 = x, \quad y^2 = 0, \qquad \qquad \qquad
\,\, \, xy = y, \quad yx = 0
\end{eqnarray*}

$(4)$ If ${\rm char} (k) = 2$ and $k \neq k^2$, then there exist
$3 + 2\, |T| + 2 |R|$ such isomorphism classes. These are the
types from (3) and two additional families defined by the
following relations for any $\delta \in R$:
\begin{eqnarray*}
&D_{1}(\delta):& \qquad x^2 = x, \quad y^2 = \delta \,(x+1), \qquad xy = yx = 0 \\
&D_{2}(\delta):& \qquad x^2 = x, \quad y^2 = \delta \, x, \qquad
\qquad \,\,\, xy = yx = y
\end{eqnarray*}
\end{corollary}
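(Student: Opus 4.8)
The plan is to run the argument of \coref{dim2alg} and \coref{flag3a} once more, this time with $A = k_{(0,1)}$. By \leref{flag01}, $\mathcal{F}\,(k_{(0,1)})$ is the coproduct $\mathcal{F}_1 \sqcup \mathcal{F}_2 \sqcup \mathcal{F}_3 \sqcup \mathcal{F}_4$, the four pieces being indexed by the possible pairs of characters $(\Lambda, \lambda)$ of $k_{(0,1)}$ (which has exactly the two characters $x \mapsto 0$ and $x \mapsto 1$). By \thref{clasdim1}$(1)$ the isomorphism classes in question are parametrized by $\mathcal{F}\,(k_{(0,1)})/\!\equiv$, and since the relation $\equiv$ of \thref{clasdim1} forces $\Lambda = \Lambda'$ and $\lambda = \lambda'$, it never mixes the pieces. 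The computation therefore reduces to determining each quotient $\mathcal{F}_i/\!\equiv$ separately and reading off the algebra from \equref{relflag1}.

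First I would make $\equiv$ explicit on each piece. As $\dim_k V = 1$, a map $r : V \to A$ is an element $\alpha = \alpha_0 + \alpha_1 x \in k_{(0,1)}$ and $v \in {\rm Aut}_k(V)$ is a scalar $q \in k^*$; substituting these and the datums of \leref{flag01} into \equref{eci1}--\equref{eci4} turns $\equiv$ into explicit relations among $(D_1, d_1, a_{01}, u)$. On $\mathcal{F}_1$ and $\mathcal{F}_2$, where $\Lambda = \lambda$, the multiplication has $xy = yx$, so these pieces produce the commutative members of the list and are governed by $(D_1, a_{01}, u)$; $D_1$ transforms affinely in $\alpha_0, \alpha_1$ and may always be normalized to $0$, after which only $(a_{01}, u)$ remains. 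On $\mathcal{F}_3$ and $\mathcal{F}_4$, where $\Lambda \neq \lambda$, one checks that the orbit of $(D_1, d_1) = (0,0)$ is all of $k^2$, so each quotient is a single point; through \equref{relflag1} these two points are the non-commutative algebra $A^1_5$ (with $xy = y$, $yx = 0$) and its opposite (with $xy = 0$, $yx = y$).

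Once $D_1$ is cleared, the normalization of $(a_{01}, u)$ is where the field $k$ intervenes. When ${\rm char}(k) \neq 2$ the summand $\lambda'(\alpha) + \Lambda'(\alpha)$ appearing in \equref{eci4} is twice a linear form in $\alpha$, so $u$ may be translated to $0$; the residual action on $a_{01}$ is scaling by a nonzero square $q^2$, whose orbits on $k$ are $\{0\}$ and the square-classes of $k^*$. This gives two classes ($a_{01} = 0, 1$) when $k = k^2$ and $2 + |S|$ classes when $k \neq k^2$ from each of $\mathcal{F}_1, \mathcal{F}_2$, matching $A^1_1, \dots, A^1_4$ and the families $B_1(d), B_2(d)$. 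Finally I would note that $A^1_5$ and its opposite are abstractly isomorphic via the character-swapping automorphism $x \mapsto 1 - x$ of $k_{(0,1)}$, which is \emph{not} a stabilizing map; hence $\mathcal{F}_3$ and $\mathcal{F}_4$ contribute a single isomorphism type. This ``$+1$'' yields the totals $5$ and $3 + 2[k^* : (k^2)^*]$ of $(1)$ and $(2)$.

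The delicate point, and the substance of $(3)$ and $(4)$, is ${\rm char}(k) = 2$: now $\lambda'(\alpha) + \Lambda'(\alpha)$ vanishes, so $u$ can no longer be translated but only rescaled as $u = q u'$, making the vanishing of $u$ an invariant and splitting each of $\mathcal{F}_1, \mathcal{F}_2$ into two regimes. When $u \neq 0$ one rescales to $u = 1$, which pins down $q$; the leftover freedom in $\alpha_1$ acts on $a_{01}$ by $a_{01} \mapsto a_{01} + (\alpha_1^2 - \alpha_1)$, so $a_{01}$ is classified up to the Artin--Schreier relation $\equiv_2$ and the representatives $T$ emerge. When $u = 0$ the freedom in $\alpha_1$ acts by $a_{01} \mapsto a_{01} + \alpha_1^2$ up to the scaling $q^2$, which is precisely $\equiv_1$ and produces the representatives $R$ once $k \neq k^2$. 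Combining the two regimes gives $1 + |T| + |R|$ classes from each of $\mathcal{F}_1, \mathcal{F}_2$, and adding the merged non-commutative type gives $3 + 2|T|$ and $3 + 2|T| + 2|R|$. I expect the main obstacle to be exactly this characteristic-two bookkeeping---keeping the $\equiv_1$ and $\equiv_2$ normalizations apart and checking that no extra coincidences arise---while the remaining cases are routine specializations in the spirit of \coref{dim2alg}.
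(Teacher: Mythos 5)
Your proposal follows essentially the same route as the paper's proof: the decomposition of ${\mathcal F}\,(k_{(0,1)})$ into four pieces via \leref{flag01}, the observation that $\equiv$ from \thref{clasdim1} never mixes pieces with different characters, the normalization of $D_1$ to $0$ followed by the case analysis on $(a_{01}, u)$ according to ${\rm char}(k)$ and $k \setminus k^2$ (with the Artin--Schreier relation $\equiv_2$ in the $u \neq 0$ regime and $\equiv_1$ in the $u = 0$ regime in characteristic two), and the identification of the ${\mathcal F}_3$- and ${\mathcal F}_4$-contributions into the single non-commutative type. The one point where you are more explicit than the paper is in naming the non-stabilizing isomorphism $x \mapsto 1-x$ behind the ``symmetry'' the paper invokes to merge ${\mathcal F}_3$ and ${\mathcal F}_4$; otherwise the normalizations and the resulting counts agree with the paper's.
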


\begin{proof}
We shall use the description of ${\mathcal F} \, (k_{(0, 1)})$
given in \leref{flag01}. First of all we remark that, due to
symmetry, the unified products associated to the flag datums of
${\mathcal F}_4 \, (k_{(0,1)})$ are isomorphic to the ones
associated to ${\mathcal F}_3 \, (k_{(0,1)})$. Thus we only have
to analyze the cases ${\mathcal F}_i \, (k_{(0,1)})$, with $i = 1,
2, 3$. The equivalence relation from \thref{clasdim1}, applied to
each of the sets $k^3$ and $k^2$, takes the following form:

$\bullet$ For ${\mathcal F}_1 \, (k_{(0,1)}) \cong k^3$: $(D_1,
a_{01}, u) \equiv (D'_1, a'_{01}, u')$ if and only if there exists
$(q, \alpha_0, \alpha_1) \in k^* \times k \times k$ such that
\begin{eqnarray}
D_1 &=& q \, D'_1 + \alpha_0 + \alpha_1 \eqlabel{1000}\\
a_{01} &=& q^2 \, a'_{01} + \alpha^2_1 - q \, u' \alpha_1 + 2 q \,
\alpha_1 D'_1 \eqlabel{1001} \\
u &=& q\, u' + 2 \, \alpha_0 \eqlabel{1002}
\end{eqnarray}
Suppose first that ${\rm char} (k) \neq 2$. Then we have: $(D_1,
a_{01}, u) \equiv (0, 0, 0)$ if and only if $a_{01} = (D_1 -
2^{-1} \, u)^2$ and $(D_1, a_{01}, u) \equiv (0, 1, 0)$ if and
only if $a_{01} - (D_1 - 2^{-1} \, u)^2 \in (k^*)^2$. These two
observations show that if $k = k^2$ then the factor set $k^3/
\equiv $ has two elements, namely $\{\overline{(0, 0, 0)},
\overline{(0, 1, 0)} \}$. The unified products $k_{(0,1)}
\ltimes_{(\Lambda, \, \lambda, \, D, \, d, \, a_0, \, u)} \, y$
associated to $\overline{(0, 0, 0)}$ and respectively
$\overline{(0, 1, 0)}$ are the algebras $A^1_1$ and $A^1_2$. On
the other hand, if $k \neq k^2$, then the factor set $k^3/ \equiv
$ is equal to $\{\overline{(0, 0, 0)}, \overline{(0, 1, 0)} \} \,
\cup \, \{ (0, d, 0) \, | \, d \in S \}$. This can be easily seen
from the following observations: $(D_1, a_{01}, u) \equiv (0, d,
0)$ if and only if $a_{01} - (D_1 - 2^{-1} \, u)^2 = q^2 d$, for
some $q\in k^*$ while  $(0, d, 0) \equiv (0, d', 0)$ if and only
if $d = q^2 d'$, for some $q\in k^*$. The unified product
associated to $\overline{(0, d, 0)}$ is the algebra $B_1(d)$.

Assume now that ${\rm char} (k) = 2$. Then the equivalence
relation on $k^3$ given by \equref{1000}-\equref{1002} takes the
form: $(D_1, a_{01}, u) \equiv (D'_1, a'_{01}, u')$ if and only if
there exists $(q, \alpha_0, \alpha_1) \in k^* \times k \times k$
such that
$$
D_1 = q \, D'_1 + \alpha_0 + \alpha_1, \qquad a_{01} = q^2 \,
a'_{01} + \alpha^2_1 - q \, u' \alpha_1, \qquad u = q\, u'
$$
We will prove the following: if $k = k^2$, then the factor set
$k^3 / \equiv $ is equal to $\{ \overline{(0, 0, 0)}, \\
\overline{(0, c, 1)} \, | \, c \in T \}$ while if $k \neq k^2$,
then the factor set $k^3 / \equiv $ turns out to be $\{
\overline{(0, 0, 0)}, \, \overline{(0, c, 1)} \\ \, | \, c \in T
\} \, \cup \, \{ \overline{(0, \delta, 0)} \, | \, \delta \in R
\}$. Indeed, the above equalities are consequences of the
following observations: $(D_1, a_{01}, u) \equiv (0, 0, 0)$ if and
only if $u = 0$ and $a_{01} \in k^2$; $(D_1, a_{01}, u) \equiv (0,
\delta, 0)$ if and only if $u = 0$ and $a_{01} \equiv_1 d$. Now,
for any $u \neq 0$ we have that $(D_1, a_{01}, u) \equiv (u^{-1}
\, D_1, u^{-2} \, a_{01}, 1) $, and moreover $(D_1, a_{01}, 1)
\equiv (0, c, 1)$, for some $c \in T$. Finally, $(0, c, 1) \equiv
(0, c', 1)$ if and only if $c \equiv_2 c'$. The algebras
associated to $\overline{(0, 0, 0)}$, $\overline{(0, c, 1)}$ and
$\overline{(0, \delta, 0)}$ are $C^1_1$, $C^1_2(c)$ and
respectively $D_1(\delta)$.

$\bullet$ For ${\mathcal F}_2 \, (k_{(0,1)}) \cong k^3$: $(D_1,
a_{01}, u) \equiv (D'_1, a'_{01}, u')$ if and only if there exists
$(q, \alpha_0, \alpha_1) \in k^* \times k \times k$ such that
\begin{eqnarray*}
D_1 &=& q \, D'_1 - \alpha_0 \\
a_{01} &=& q^2 \, a'_{01} - \alpha_1^2 - q\, \alpha_1 u' - 2q
\alpha_1 D'_1 \\
u &=& q\, u' + 2 (\alpha_0 + \alpha_1)
\end{eqnarray*}
Suppose first that ${\rm char} (k) \neq 2$. Then we have: $(D_1,
a_{01}, u) \equiv (0, 0, 0)$ if and only if $a_{01} = - (D_1 +
2^{-1} \, u)^2$ and $(D_1, a_{01}, u) \equiv (0, 1, 0)$ if and
only if $a_{01} = - (D_1 + 2^{-1} \, u)^2 + q^2$, for some $q\in
k^*$. These two observations show that if $k = k^2$ then the
factor set $k \times k \times k / \equiv $ has two elements,
namely $\overline{(0, 0, 0)}$ and $\overline{(0, 1, 0)}$. The
unified products $k_{(0,1)} \ltimes_{(\Lambda, \, \lambda, \, D,
\, d, \, a_0, \, u)} \, y$ associated to $\overline{(0, 0, 0)}$
and $\overline{(0, 1, 0)}$ are the algebras $A^1_3$ and
respectively $A^1_4$. On the other hand, if $k \neq k^2$ then the
factor set $k \times k \times k / \equiv $ is equal to
$\{\overline{(0, 0, 0)}, \overline{(0, 1, 0)} \} \, \cup \, \{ (0,
d, 0) \, | \, d \in S \}$. Indeed, let $(D_1, a_{01}, u) \in k^3$
such that $a_{01} = - (D_1 + 2^{-1} \, u)^2 + d$, for some $d \neq
0$. Then $(D_1, a_{01}, u) \equiv (0, d, 0)$ and moreover $(0, d,
0) \equiv (0, d', 0)$ if and only if there exists $q\in k^*$ such
that $d = q^2 d'$. The unified product $k_{(0,1)}
\ltimes_{(\Lambda, \, \lambda, \, D, \, d, \, a_0, \, u)} \, y$
associated to $\overline{(0, d, 0)}$, for some $d\in S$, is the
algebra $B_2(d)$.

Consider now the case ${\rm char} (k) = 2$. We have: $(D_1,
a_{01}, u) \equiv (0, 0, 0)$ if and only if $u = 0$ and $a_{01}
\in k^2$; $(D_1, a_{01}, u) \equiv (0, \delta, 0)$ if and only if
$u = 0$ and $a_{01} \equiv_1 \delta$. Let $(D_1, a_{01}, u) \in
k^3$ with $u \neq 0$, then $(D_1, a_{01}, u) \equiv (u^{-1}D_1,
u^{-2} a_{01}, 1)$. Finally, $(D_1, a_{01}, 1) \equiv (D'_1,
a'_{01}, 1)$ if and only if $a_{01} \equiv_2 a'_{01}$. These
observations show that if $k = k^2$, then the factor set $k^3 /
\equiv $ is equal to $\{ \overline{(0, 0, 0)}, \, \overline{(0, c,
1)} \, | \, c \in T \}$ while if $k \neq k^2$, the factor set $k^3
/ \equiv $ coincides with $\{ \overline{(0, 0, 0)}, \,
\overline{(0, c, 1)} \, | \, c \in T \} \, \cup \, \{
\overline{(0, \delta, 0)} \, | \, \delta \in R \}$. The algebras
corresponding to $\overline{(0, 0, 0)}$, $\overline{(0, c, 1)}$
and $\overline{(0, \delta, 0)}$ are $C^1_3$, $C^1_4(c)$ and
respectively $D_2(\delta)$.

$\bullet$ For ${\mathcal F}_3 \, (k_{(0,1)})\cong k^2$: $(D_1,
d_1) \equiv (D'_1, d'_1)$ if and only if there exists a triple
$(q, \alpha_0, \alpha_1) \in k^* \times k \times k$ such that
$$
D_1 = q\, D'_1 + \alpha_0 + \alpha_1, \quad d_1 = q\, d'_1 -
\alpha_0
$$
Regardless of the characteristic of $k$, the factor set $k \times
k / \equiv $ contains only one element, namely $\overline{(0,
0)}$. The algebra associated to it is $A^1_5$, if ${\rm char} (k)
\neq 2$ or $C^1_5$, if ${\rm char} (k) = 2$. The proof is now
finished.
\end{proof}

In order to complete the description of all $3$-dimensional
supersolvable algebras we are left to study one more case, namely
the one when ${\rm char} (k) = 2$ and the intermediary algebra is
isomorphic to $k_{(c, 1)}$, for some $c\in T$. The case $c = 0$ is
treated in \coref{flag3b}. Suppose now that $c \neq 0$. We will
see that the set ${\mathcal F} \, (k_{(c, 1)})$ of all flag datums
is empty (i.e. there exists no $3$-dimensional algebras containing
$k_{(c, 1)}$ as a subalgebra) since the algebra $k_{(c, 1)}$ has
no characters. Indeed, the algebra $k_{(c, 1)}$ is generated by an
element $x$ such that $x^2 = c + x$. Therefore, the characters of
this algebra are in bijection with the solutions \emph{in} $k$ of
the equation $y^2 + y + c = 0$. Now, since $c \in T$ and $c \neq
0$ this equation has no solutions in $k$: if $\alpha$ is such a
solution then $c = \alpha + \alpha^2$, i.e. $c \equiv_2 0$ and we
obtain $c = 0$ since $c \in T$ a system o representatives for
$\equiv_2$ containing $0$. As a conclusion of this section we
arrive at the classification of all $3$-dimensional supersolvable
algebras:

\begin{theorem} \thlabel{flag3dimsin}
Let $k$ be an arbitrary field. Then any $3$-dimensional
supersolvable algebra is isomorphic to an algebra from the
following list:

$(1)$ $A^0_i$ or $A^1_i$, for all $i = 1, \cdots, 5$, if ${\rm
char} (k) \neq 2$ and $k = k^2$.

$(2)$ $A^0_i$, $A^1_i$, $A^0(d)$, $B_1(d)$ or $B_2(d)$, for all $i
= 1, \cdots, 5$ and $d\in S$, if ${\rm char} (k) \neq 2$ and $k
\neq k^2$.

$(3)$ $A^0_i$, $C^{1}_1$, $C^{1}_2(c)$, $C^{1}_3$, $C^{1}_4(c)$ or
$C^{1}_5$, for all $i = 1, \cdots, 5$ and $c\in T$, if ${\rm char}
(k) = 2$ and $k = k^2$.

$(4)$ $A^0_i$, $A^0(d)$, $C^{1}_1$, $C^{1}_2(c)$, $C^{1}_3$,
$C^{1}_4(c)$, $C^{1}_5$, $D_1(\delta)$ or $D_2(\delta)$, for all
$i = 1, \cdots, 5$, $d\in S$, $c\in T$ and $\delta \in R$, if
${\rm char} (k) = 2$ and $k \neq k^2$.
\end{theorem}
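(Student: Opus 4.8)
The plan is to obtain \thref{flag3dimsin} as an assembly of the codimension-one classification already carried out, the one genuine step being to pin down which $2$-dimensional algebras can serve as the middle term of a length-two flag. By \deref{flagex}, any $3$-dimensional supersolvable algebra $E$ carries a chain $k = E_0 \subset E_1 \subset E_2 = E$ in which $E_1$ is a $2$-dimensional subalgebra and $E$ is a codimension-one extension of $E_1$. Since the entire construction is stable under transport of structure, I would first replace $E$ by an isomorphic algebra so that $E_1$ is literally one of the standard representatives furnished by \coref{dim2alg}; the codimension-one extensions of that representative which fix it pointwise are then exactly the unified products described in \prref{unifdim1}, and these are classified up to a stabilizing isomorphism by \coref{flag3a} and \coref{flag3b}. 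Consequently $E$ is isomorphic to one of the algebras appearing in those two corollaries.

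The first substantive step is to cut down the admissible intermediaries. An algebra $A$ admits a codimension-one (flag) extension precisely when ${\mathcal F}\,(A) \neq \emptyset$, and by \deref{tehnica} every flag datum of $A$ carries two characters $\Lambda,\lambda : A \to k$; hence $A$ must have at least one character. Scanning the list of \coref{dim2alg}, I would discard $k_{(d,0)} = k(\sqrt{d})$ for $d \in S$, which is a field extension of $k$ and therefore characterless, and, in characteristic two, the algebras $k_{(c,1)}$ with $c \neq 0$: their characters correspond to roots in $k$ of $y^2 + y + c = 0$, and such a root $\alpha$ would give $c = \alpha^2 + \alpha$, i.e. $c \equiv_2 0$, forcing $c = 0$ since $0 \in T$. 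What survives, uniformly across all four field regimes, is exactly $E_1 \cong k_{(0,0)}$ or $E_1 \cong k_{(0,1)}$, the latter being the $c = 0$ instance of $k_{(c,1)}$.

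With the middle term reduced to these two algebras, the classification of the extensions is already in hand: the supersolvable extensions stabilizing $k_{(0,0)}$ are enumerated in \coref{flag3a} (built on \leref{flag00}), and those stabilizing $k_{(0,1)}$ in \coref{flag3b} (built on \leref{flag01}). The remainder of the proof is then pure bookkeeping: overlaying the two lists case by case in the parameters $\mathrm{char}(k)$ and $k = k^2$ versus $k \neq k^2$. In each regime the union of the $k_{(0,0)}$-family with the $k_{(0,1)}$-family reproduces precisely the four lists $(1)$--$(4)$ of the statement, the $A^0$-family arising from the former and the $A^1$-, $B$-, $C^1$- and $D$-families from the latter.

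I expect the only real content to lie in the character-based elimination of the second paragraph, after which the theorem follows mechanically from \coref{flag3a} and \coref{flag3b}. The single point that warrants attention is that this union establishes \emph{completeness}—every $3$-dimensional supersolvable algebra is isomorphic to a listed one—but not, a priori, pairwise non-isomorphism, since one abstract algebra may embed into the flag with more than one admissible intermediary. Verifying that the displayed families are mutually distinct (so that the counts in the two corollaries add without collision) would require separating them by isomorphism invariants such as the number of idempotents or the dimension of the nilradical; this, however, is not needed for the completeness assertion that the theorem actually makes.
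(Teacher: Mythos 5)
Your proposal is correct and follows essentially the same route as the paper: reduce the intermediary $E_1$ to $k_{(0,0)}$ or $k_{(0,1)}$ by the character-based elimination of $k(\sqrt{d})$ and of $k_{(c,1)}$ with $c\neq 0$ in characteristic two (via the equation $y^2+y+c=0$ and the choice $0\in T$), and then invoke \coref{flag3a} and \coref{flag3b}. Your closing remark is also apt — the theorem asserts only completeness of the list, so the potential redundancy you flag does not need to be resolved.
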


\end{document}